\newlength{\leftstackrelawd}
\newlength{\leftstackrelbwd}
\def\leftstackrel#1#2{\settowidth{\leftstackrelawd}%
	{${{}^{#1}}$}\settowidth{\leftstackrelbwd}{$#2$}%
	\addtolength{\leftstackrelawd}{-\leftstackrelbwd}%
	\leavevmode\ifthenelse{\lengthtest{\leftstackrelawd>0pt}}%
	{\kern-.5\leftstackrelawd}{}\mathrel{\mathop{#2}\limits^{#1}}}
\newcommand{\bdd}[1]{ \boldsymbol{#1} }
\newcommand{\unitvec}[1]{\hat{\bdd{#1}}}
\crefname{hypothesis}{Hypothesis}{Hypotheses}
\title{A Mass Conserving Mixed \lowercase{$hp$}-FEM Scheme for Stokes Flow. Part III: Implementation and Preconditioning \thanks{Submitted to the editors DATE.}}
\author{Mark Ainsworth\thanks{ Division of Applied Mathematics, Brown University, Providence, RI
		(\email{mark\_ainsworth@brown.edu}, \email{charles\_parker@brown.edu}).} \and Charles Parker\footnotemark[2]  }
\DeclareMathOperator{\vcurl}{\mathbf{curl}}
\DeclareMathOperator{\dive}{div}
\DeclareMathOperator{\supp}{supp}
\begin{document}

\maketitle

\begin{abstract}		
	This is the third part in a series on a mass conserving, high order, mixed finite element method for Stokes flow. In this part, we study a block-diagonal preconditioner for the indefinite Schur complement system arising from the discretization of the Stokes equations using these elements. The underlying finite element method is uniformly stable in both the mesh size $h$ and polynomial order $p$, and we prove bounds on the eigenvalues of the preconditioned system which are independent of $h$ and grow modestly in $p$. The analysis relates the Schur complement system to an appropriate variational setting with subspaces for which exact sequence properties and inf-sup stability hold. Several numerical examples demonstrate agreement with the theoretical results.
\end{abstract}

\begin{keywords}
  preconditioning mixed $hp$-finite elements, Stokes flow, domain decomposition
\end{keywords}

\begin{AMS}
	65N30, 65N55, 76M10
\end{AMS}

\section{Introduction}
\label{sec:intro}

This paper is the third part in a series discussing a
mass conserving, high order, mixed
finite element method for Stokes flow on a simply connected polygon $\Omega$
with boundary $\Gamma = \partial \Omega$: Find $(\bdd{u}, p) \in
\bdd{H}^1_0(\Omega) \times L^2_0(\Omega)$ such that
\begin{subequations}
	\label{eq:stokes weak form}
	\begin{align} a(\bdd{u}, \bdd{v}) + b(\bdd{v}, p) &= (\bdd{f}, \bdd{v}) &
	&\forall \bdd{v} \in \bdd{H}^1_0 (\Omega) \label{eq:stokes weak form a} \\
	b(\bdd{u}, q) &= 0 & &\forall q \in L^2_0(\Omega), \label{eq:stokes weak form
		b}
	\end{align}
\end{subequations}
where $\bdd{u} = (u_1, u_2)$ is the fluid velocity, $p$ the pressure, $\bdd{f}
\in \bdd{L}^2(\Omega)$ the body force,
$a(\bdd{u}, \bdd{v}) := \nu(\nabla \bdd{u}, \nabla \bdd{v})$ and $b(\bdd{v}, p) := -(\dive \bdd{v}, p)$. Without loss of generality, by rescaling, we may reduce \cref{eq:stokes weak form} to the case where the kinematic viscosity $\nu = 1$. Here, $H^s(\Omega)$ and $H^s_0(\Omega)$ denote the usual Sobolev spaces \cite{Adams03},
$\bdd{H}^s(\Omega)$, $\bdd{H}^s_0(\Omega)$ the vector valued Sobolev spaces, i.e. $\bdd{H}^s(\Omega) := [H^{s}(\Omega)]^2$, and $L^2_0(\Omega)$ denotes the
(closed) subspace of square integrable functions with vanishing average value:
\begin{align*}
L^2_0(\Omega) := \left\{ q \in L^2(\Omega) : \int_{\Omega} q \ d\bdd{x} =
0\right\}.
\end{align*}
Problem \cref{eq:stokes weak form} is approximated a using mixed, high order,
finite element scheme on a mesh $\mathcal{T}$ as follows: Find $(\bdd{u}_{hk}, p_{hk}) \in \bdd{V}_{0}
\times Q_{0}$ such that
\begin{subequations}
	\label{eq:discrete stokes system}
	\begin{align}
	a(\bdd{u}_{hk}, \bdd{v}) + b(\bdd{v}, p_{hk}) &= (\bdd{f}, \bdd{v}) & &\forall
	\bdd{v} \in \bdd{V}_{0} \label{eq:discrete stokes system 1} \\ b(\bdd{u}_{hk},
	q) &= 0 & &\forall q \in Q_{0}, \label{eq:discrete stokes system 2}
	\end{align}
\end{subequations}
where the finite element spaces are chosen to be
\cite{AinCP19StokesI,AinCP19StokesII,Falk13}:
\begin{align*}
V &:= \{ v \in H^1(\Omega) : v|_{K} \in \mathcal{P}_{k}(K) \ \forall K \in
\mathcal{T}, \ v \text{ is ${C}^1$ at noncorner vertices} \}, \\ 
{Q} &:= \{ q \in
{L}^2(\Omega) : q|_{K} \in \mathcal{P}_{k-1}(K) \ \forall K \in \mathcal{T}, \
q \text{ is $C^0$ at noncorner vertices} \},
\end{align*}
$V_0 := V \cap H^1_0(\Omega)$, $\bdd{V}_0 = V_0 \times V_0$, $Q_0 = Q \cap L^2_0(\Omega)$, $\mathcal{P}_{k}$ denotes the space of all polynomials of degree at most
$k$, and a corner vertex is a vertex of the physical domain $\Omega$. The local degrees of freedom of the spaces $V$ and $Q$ are illustrated in \cref{fig:local dofs}.

\begin{figure}[ht]
	\centering
	\begin{subfigure}[b]{0.48\linewidth}
		\centering
		\begin{tikzpicture}
		\node[regular polygon, regular polygon sides=3, draw, minimum size=3cm]
		(m) at (0,0) {};

		\coordinate (L1) at (m.corner 1);
		\coordinate (L2) at (m.corner 2);
		\coordinate (L3) at (m.corner 3);
		
		\filldraw (barycentric cs:L1=0,L2=1,L3=0) circle (2pt) node[below]{}; 
		
		\draw (barycentric cs:L1=0,L2=1,L3=0) circle (5pt) node[below]{}; 
		
		\filldraw (barycentric cs:L1=0,L2=1/3,L3=2/3) circle (2pt) node[below]{};
		
		\filldraw (barycentric cs:L1=0,L2=2/3,L3=1/3) circle (2pt) node[below]{};
		
		\filldraw (barycentric cs:L1=0,L2=0,L3=1) circle (2pt) node[above]{}; 
		
		\draw (barycentric cs:L1=0,L2=0,L3=1) circle (5pt) node[above]{}; 
		
		\filldraw (barycentric cs:L1=1/3,L2=0,L3=2/3) circle (2pt) node[above]{}; 
		
		\filldraw (barycentric cs:L1=2/3,L2=0,L3=1/3) circle (2pt) node[above]{}; 
		
		\filldraw (barycentric cs:L1=1,L2=0,L3=0) circle (2pt) node[above]{}; 
		
		\draw (barycentric cs:L1=1,L2=0,L3=0) circle (5pt) node[above]{};

		\filldraw (barycentric cs:L1=1/3,L2=2/3,L3=0) circle (2pt) node[left]{};
		
		\filldraw (barycentric cs:L1=2/3,L2=1/3,L3=0) circle (2pt) node[left]{};
		
		\filldraw[] (barycentric cs:L1=1/5,L2=1/5,L3=3/5) circle (2pt) node[above]{};
		
		\filldraw[] (barycentric cs:L1=1/5,L2=2/5,L3=2/5) circle (2pt) node[above]{};
		
		\filldraw[] (barycentric cs:L1=1/5,L2=3/5,L3=1/5) circle (2pt) node[above]{};
		
		\filldraw[] (barycentric cs:L1=2/5,L2=1/5,L3=2/5) circle (2pt) node[above]{};
		
		\filldraw[] (barycentric cs:L1=2/5,L2=2/5,L3=1/5) circle (2pt) node[above]{};
		
		\filldraw[] (barycentric cs:L1=3/5,L2=1/5,L3=1/5) circle (2pt) node[above]{};

		\end{tikzpicture}
		\caption{}
		\label{fig:v dofs}
	\end{subfigure}
	\hfill
	\begin{subfigure}[b]{0.48\linewidth}
		\centering
		\begin{tikzpicture}
		\node[regular polygon, regular polygon sides=3, draw, minimum size=3cm]
		(m) at (0,0) {};
		\node[regular polygon, regular polygon sides=3, minimum size=2cm]
		(m2) at (0,0) {};
		
		\coordinate (L1) at (m.corner 1);
		\coordinate (L2) at (m.corner 2);
		\coordinate (L3) at (m.corner 3);
		
		\coordinate (LL1) at (m2.corner 1);
		\coordinate (LL2) at (m2.corner 2);
		\coordinate (LL3) at (m2.corner 3);
		
		\filldraw (barycentric cs:L1=0,L2=1,L3=0) circle (2pt) node[below]{}; 
		
		\filldraw (barycentric cs:L1=0,L2=0,L3=1) circle (2pt) node[above]{}; 
		
		\filldraw (barycentric cs:L1=1,L2=0,L3=0) circle (2pt) node[above]{}; 
		
		\filldraw (barycentric cs:LL1=0,LL2=1/4,LL3=3/4) circle (2pt) node[below]{}; 
		
		\filldraw (barycentric cs:LL1=0,LL2=2/4,LL3=2/4) circle (2pt) node[below]{}; 
		
		\filldraw (barycentric 
		cs:LL1=0,LL2=3/4,LL3=1/4) circle (2pt) node[below]{}; 
		
		\filldraw (barycentric cs:LL1=1/4,LL2=0/4,LL3=3/4) circle (2pt) node[below]{}; 
		
		\filldraw (barycentric cs:LL1=1/4,LL2=1/4,LL3=2/4) circle (2pt) node[below]{}; 
		
		\filldraw (barycentric cs:LL1=1/4,LL2=2/4,LL3=1/4) circle (2pt) node[below]{}; 
		
		\filldraw (barycentric cs:LL1=1/4,LL2=3/4,LL3=0/4) circle (2pt) node[below]{}; 
		
		\filldraw (barycentric cs:LL1=2/4,LL2=0/4,LL3=2/4) circle (2pt) node[below]{};
		
		\filldraw (barycentric cs:LL1=2/4,LL2=1/4,LL3=1/4) circle (2pt) node[below]{};
		
		\filldraw (barycentric cs:LL1=2/4,LL2=2/4,LL3=0/4) circle (2pt) node[below]{};
		
		\filldraw (barycentric cs:LL1=3/4,LL2=0/4,LL3=1/4) circle (2pt) node[below]{};
		
		\filldraw (barycentric cs:LL1=3/4,LL2=1/4,LL3=0/4) circle (2pt) node[below]{};
		
		\end{tikzpicture}
		\caption{}
		\label{fig:q dofs}
	\end{subfigure}
	\caption{Local degrees of freedom for the finite element spaces (a) $V$ and (b) $Q$ in the case $k=5$. Dots indicate degrees of freedom corresponding to evaluation at the point located at the dot while circles indicate gradient evaluation. \label{fig:local dofs}}
\end{figure}
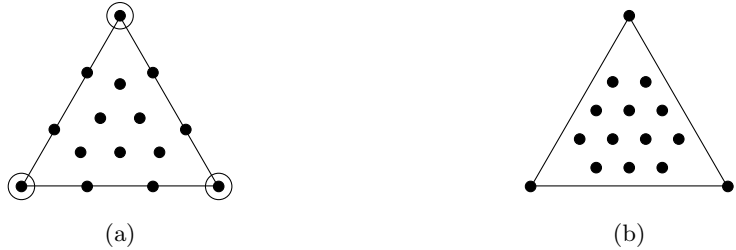

In Part I \cite{AinCP19StokesI}, it was shown that that these elements are uniformly
inf-sup stable in the mesh size $h$ and polynomial order $k$ if the mesh
$\mathcal{T}$ is corner-split which, roughly speaking, means that every element $K \in
\mathcal{T}$ has at most one edge lying on the domain boundary $\Gamma$; for a precise definition, see \cite[p. 12]{AinCP19StokesI}.
\begin{theorem}[Theorem 3.1 \& Corollary 3.2 \cite{AinCP19StokesI}]
	\label{thm:inf-sup global spaces} If the mesh $\mathcal{T}$ is corner-split,
	then for every $q \in Q_0$,  there exists a $\bdd{v} \in \bdd{V}_0$ such that
	$\dive \bdd{v} = q$ and
	\begin{align*}
	\|\bdd{v}\|_{\bdd{H}^1(\Omega)} \leq \beta^{-1} \|q\|_{L^2(\Omega)},
	\end{align*}
	where $0 < \beta < 1$ is independent of $k$ and $h$. Thus, the spaces
	$\bdd{V}_0 \times Q_0$ are uniformly inf-sup stable:
	\begin{align}
	\label{eq:inf-sup global spaces} \inf_{0 \neq q \in Q_0} \sup_{\bdd{0} \neq
		\bdd{v} \in \bdd{V}_0} \frac{b(\bdd{v}, q)}{\|\bdd{v}\|_{\bdd{H}^1(\Omega)}
		\|q\|_{L^2(\Omega)} } \geq \beta.
	\end{align}
\end{theorem}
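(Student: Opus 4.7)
The goal is to construct, for every $q \in Q_0$, a velocity $\bdd{v} \in \bdd{V}_0$ with $\dive \bdd{v} = q$ and $\|\bdd{v}\|_{\bdd{H}^1(\Omega)} \leq \beta^{-1}\|q\|_{L^2(\Omega)}$; the stated inf-sup inequality then follows by choosing (a suitable sign of) this $\bdd{v}$ in the supremum.

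My plan is to verify the Fortin criterion. First, by the continuous inf-sup condition on $\bdd{H}^1_0(\Omega) \times L^2_0(\Omega)$ (Bogovskii's operator), for $q \in Q_0 \subseteq L^2_0(\Omega)$ there exists $\bdd{w} \in \bdd{H}^1_0(\Omega)$ with $\dive \bdd{w} = q$ and $\|\bdd{w}\|_{\bdd{H}^1(\Omega)} \leq C_\Omega \|q\|_{L^2(\Omega)}$. It then suffices to build a bounded linear operator $\Pi : \bdd{H}^1_0(\Omega) \to \bdd{V}_0$ satisfying
\begin{equation*}
b(\bdd{w} - \Pi \bdd{w}, q') = 0 \quad \forall q' \in Q_0, \qquad \|\Pi \bdd{w}\|_{\bdd{H}^1(\Omega)} \leq C \|\bdd{w}\|_{\bdd{H}^1(\Omega)},
\end{equation*}
with $C$ independent of $h$ and $k$. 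Granted such a $\Pi$, note that $\dive \bdd{V}_0 \subseteq Q_0$: locally $\dive \bdd{v} \in \mathcal{P}_{k-1}$; the $C^1$ smoothness of each component of $\bdd{v}$ at non-corner vertices makes $\dive \bdd{v}$ continuous there; and $\int_\Omega \dive \bdd{v} = 0$ by the homogeneous boundary condition. Both $\dive \Pi \bdd{w}$ and $q$ thus lie in $Q_0$, so testing the Fortin identity with $q' = \dive \Pi \bdd{w} - q$ forces $\dive \Pi \bdd{w} = q$, and $\bdd{v} = \Pi \bdd{w}$ completes the argument with $\beta^{-1} = C \cdot C_\Omega$.

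The construction of $\Pi$ is where the substance lies. I would adopt a two-scale decomposition: a $C^1$-conforming Scott--Zhang-type quasi-interpolant supplies an $H^1$-stable coarse approximation $\Pi_1 \bdd{w} \in \bdd{V}_0$ respecting both point-value and gradient data at non-corner vertices and the zero boundary condition. The residual $\bdd{w} - \Pi_1 \bdd{w}$ is then corrected elementwise by high-order polynomial bubbles of degree $k$ supported in a small patch around each $K$, chosen so that their divergences match the residual divergence on $K$ in the sense of Fortin orthogonality against $Q_0$, and so that their traces --- together with gradient traces at non-corner vertices --- vanish on $\partial K$. The corner-split assumption ensures that each element has at most one edge on $\Gamma$, leaving sufficient interior-edge degrees of freedom to carry the correction while preserving homogeneous boundary data.

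The hard part will be $k$-uniformity of the bubble correction. It rests on a local right inverse of $\dive$ on polynomials: given $r_K \in \mathcal{P}_{k-1}(K)$ with an appropriate mean constraint, construct $\bdd{b}_K$ of polynomial degree $k$ with the required vertex/edge vanishing, $\dive \bdd{b}_K = r_K$, and $\|\bdd{b}_K\|_{\bdd{H}^1(K)} \leq C\|r_K\|_{L^2(K)}$ for $C$ independent of $k$. Such $k$-robust polynomial right inverses of $\dive$ follow from sharp polynomial extension results in the tradition of Bernardi--Maday and Mu\~noz-Sola, but here they must be tailored so that the bubbles respect the $C^1$ compatibility at non-corner vertices. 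This interplay between high-order polynomial stability and the enhanced vertex continuity of the spaces $V$ and $Q$ is, in my estimation, the technical heart of the argument.
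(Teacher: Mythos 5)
First, a point of comparison: this paper does not prove \cref{thm:inf-sup global spaces} at all --- it is imported verbatim from Part I \cite{AinCP19StokesI} (Theorem 3.1 and Corollary 3.2 there), so there is no in-paper argument to measure your proposal against line by line. Judged on its own terms, your outline (continuous right inverse of $\dive$ via Bogovskii, then a $k$-uniformly bounded Fortin operator $\Pi$, then the observation that $\dive\bdd{V}_0\subseteq Q_0$ upgrades Fortin orthogonality to the exact identity $\dive\Pi\bdd{w}=q$) is the standard and correct general shape, and your reduction of the inf-sup inequality to the existence of the bounded right inverse is fine.

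The genuine gap is in the correction step, and it sits exactly where the whole difficulty of the theorem is concentrated. An interior bubble $\bdd{b}_K$ whose trace and vertex gradients vanish on $\partial K$ has $\dive\bdd{b}_K\in Q_I(K)$: zero mean on $K$ and vanishing at the vertices of $K$ (this is the content of the interior exact sequence, cf.\ \cref{thm:interior space element exact sequence}). Consequently bubbles can only enforce Fortin orthogonality against the \emph{interior} pressure modes; they are powerless against the exterior ones, namely the vertex values and the element averages of $q-\dive\Pi_1\bdd{w}$. Those must be matched by the coarse operator itself: the element averages through the edge fluxes of $\Pi_1\bdd{w}$, and the vertex values through the gradient degrees of freedom of $\Pi_1\bdd{w}$ at the vertices. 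The latter is where the corner-split hypothesis actually does its work --- at a corner vertex the pressure is multivalued, so every element meeting the corner imposes its own constraint on the divergence at that point, and one needs enough elements (equivalently, enough independent tangential-derivative degrees of freedom) around the corner to satisfy all of them simultaneously with an $H^1$-bounded choice. Your proposal mentions corner-split only as supplying ``sufficient interior-edge degrees of freedom'' and never confronts these vertex and mean-value constraints; as written, the claimed orthogonality $b(\bdd{w}-\Pi\bdd{w},q')=0$ for \emph{all} $q'\in Q_0$ is not achieved. A secondary, unaddressed issue is the $k$-uniform $H^1$-stability of a $C^1$ Scott--Zhang operator carrying gradient data at vertices: pointwise gradient evaluation is not $H^1$-bounded, so the averaged gradients used to define $\Pi_1$ must be chosen so that the resulting vertex values of $\dive\Pi_1\bdd{w}$ still track those of $q$, which couples the two difficulties rather than separating them.
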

Strictly speaking, \cite[Corollary 3.2]{AinCP19StokesI} shows that $\beta$ depends on the mesh-dependent quantity $\Theta(\mathcal{T})$ defined in \cite[eq. (3.2)]{AinCP19StokesI}, but is nevertheless bounded independently of the mesh size $h$ and polynomial degree $k$. Moreover, the finite element solution $\bdd{u}_{hk}$ will be pointwise
divergence free \cite[\S 1 and Theorem 2.6]{AinCP19StokesI}.  In Part II
\cite{AinCP19StokesII}, it was shown that these elements have optimal
approximation properties in both the mesh size $h$ and the polynomial order
$k$. On locally quasi-uniform meshes, the finite element solution to
\cref{eq:discrete stokes system} converges at the optimal algebraic rate to the solution to \cref{eq:stokes weak form} \cite[Theorem 2.2]{AinCP19StokesII}. Moreover, if the data $\bdd{f}$ belongs to a particular countably normed space, then the finite element method with properly geometrically graded meshes
converges exponentially fast as both the mesh is refined and the polynomial
degree is increased \cite[Corollary 2.5]{AinCP19StokesII}. The spaces
$\bdd{V}_0 \times Q_0$ are currently the only known triangular finite element spaces
that are uniformly inf-sup stable in $h$ and $k$, give pointwise divergence
free velocities, and posses optimal approximation properties.

In the current work, we turn to issues relating to the practical application of the
method. In particular, we give explicit bases for the spaces $\bdd{V}$ and $Q$
that result in an efficient preconditioner for the solution of the resulting
linear system for \cref{eq:discrete stokes system}, which may be used in
conjunction with an iterative solver for indefinite systems, such as MINRES
\cite{Paige75}.  The preconditioner consists of a standard static condensation,
or elimination of the interior degrees of freedom, along with an Additive Schwarz
preconditioner (ASM) \cite{SmBjGr04,ToWi06} for the resulting Schur complement
system associated with the interface degrees of freedom. Thanks to a judicious choice of basis, the condition number grow at most as $\log^3 k$ as $k$ is increased, and is uniform in the mesh size.

The current work finds inspiration in the early works of
\cite{Bram90,Klawonn98block,Silv94,Wathen93} for $h$-version methods,
\cite{Ain99,letallec97} for $hp$-version finite element methods, and
\cite{Klaw98,Mad93,Pav98,Pav99} for spectral element methods, each of which
developed block diagonal and/or block triangular preconditioners in terms of
existing preconditioners for second order elliptic problems.  Unfortunately,
these types of approaches do not readily extend to the mixed finite element
scheme \cref{eq:discrete stokes system} owing to the additional smoothness requirements imposed at
element vertices for both the velocity and pressure spaces. Our treatment of
these degrees of freedom is similar to the treatment of the second order
derivative degrees of freedom in preconditioning the stiffness matrix for
$H^2(\Omega)$-conforming methods \cite{AinCP19Precon} and the treatment of the
vertex degrees of freedom in preconditioning the mass matrix for $H^1(\Omega)$
problems \cite{AinJia19}.

\section{General Form of a Block-Diagonal Preconditioner}
\label{sec:precond and main theorem}
 
By fixing bases for the spaces $\bdd{V}_0$ and $Q$, we may express $\bdd{u} \in \bdd{V}_0$ and $p \in Q$ as
\begin{align*}
\bdd{u} = \vec{u}_{E}^{T} \vec{\Phi}_{E} +  \vec{u}_{I}^{T} \vec{\Phi}_{I} \quad \text{and} \quad p = \vec{p}_e^{T} \vec{\psi}_{e} + \vec{p}_{\iota}^{T} \vec{\psi}_{\iota},
\end{align*}
for suitable $\vec{u}_E$, $\vec{u}_I$, $\vec{p}_e$, $\vec{p}_{\iota}$, where $\vec{\Phi}_E$ is the vector of exterior velocity basis functions (vertex and edge functions), $\vec{\Phi}_{I}$ the vector of interior velocity basis functions, $\vec{\psi}_{e}$ the vector of exterior pressure basis functions, and $\vec{\psi}_{\iota}$ the vector of interior pressure basis functions. Here, the exterior pressure functions consist of vertex functions and a function corresponding to the average value over each element. The variational problem \cref{eq:discrete stokes system} in matrix form then reads
\begin{align}
\label{eq:full system matrix}
\begin{bmatrix}
\begin{array}{cc|cc}
\bdd{A}_{EE} & \bdd{B}_{E e} & \bdd{A}_{EI} & \bdd{B}_{E\iota} \\
\bdd{B}_{e E} & \bdd{0} &  \bdd{B}_{e I} & \bdd{0}\\
\hline 
\bdd{A}_{IE} & \bdd{B}_{I e} &  \bdd{A}_{II} & \bdd{B}_{I \iota} \\
\bdd{B}_{\iota E} & \bdd{0} & \bdd{B}_{\iota I} & \bdd{0}
\end{array}
\end{bmatrix}
\begin{bmatrix}
\begin{array}{c}
\vec{u}_E \\ \vec{p}_e \\ \hline \vec{u}_I \\ \vec{p}_\iota
\end{array}
\end{bmatrix}
&= \begin{bmatrix}
\begin{array}{c}
\vec{f}_E \\
\vec{0} \\
\hline
\vec{f}_I \\
\vec{0}
\end{array}
\end{bmatrix}.
\end{align}
The matrix appearing in \cref{eq:full system matrix} is symmetric but indefinite, owing to the zero subblocks. The pressure variable in problem \cref{eq:discrete stokes system} is unique up to a constant, meaning that the matrix in \cref{eq:full system matrix} has a one-dimensional null space. Nevertheless, the system \cref{eq:full system matrix} is consistent since the components of the load vector corresponding to pressure basis functions vanish identically and, a fortiori, are orthogonal to constant pressure modes. Consequently, the system \cref{eq:full system matrix} is uniquely solvable up to the addition of a constant in the pressure thanks to the inf-sup condition \cref{eq:inf-sup global spaces} and the uniform ellipticity of $a(\cdot, \cdot)$. 

The conditioning of the matrix, in common with standard $hp$-finite elements, degenerates rapidly with both the mesh size $h$ and the polynomial order $k$ of the elements. Indeed, almost every practical choice of basis function results in a rapid deterioration of the condition number $k$, even for symmetric, positive definite systems \cite{Ain03,Olsen95}. We seek a preconditioner for the symmetric, indefinite system \cref{eq:full system matrix} which controls the growth of the conditioning in both $h$ and $k$.
 
The first step towards preconditioning is to eliminate, or \textit{statically condense}, the interior degrees of freedom to arrive at the Schur complement system
\begin{align}
\label{eq:Schur complement system}
\bdd{S} \begin{bmatrix}
\vec{u}_E \\ \vec{p}_e
\end{bmatrix} = \begin{bmatrix}
\vec{f}_{E}^{*} \\ \vec{g}_{e}^{*}
\end{bmatrix} :=  \begin{bmatrix}
\vec{f}_E \\ \vec{0}
\end{bmatrix} - \begin{bmatrix}
\bdd{A}_{EI} & \bdd{B}_{E\iota} \\ 
\bdd{B}_{eI} & \bdd{0}
\end{bmatrix} \begin{bmatrix}
\bdd{A}_{II} & \bdd{B}_{I\iota} \\ 
\bdd{B}_{\iota I} & \bdd{0}
\end{bmatrix}^{-1} \begin{bmatrix}
\vec{f}_I \\ \vec{0}
\end{bmatrix},
\end{align}
where
\begin{align}
\label{eq:schur complement matrix}
\bdd{S} &= \begin{bmatrix}
\widetilde{\bdd{A}} & \widetilde{\bdd{B}}^{T} \\
\widetilde{\bdd{B}} & \bdd{0}
\end{bmatrix} = \begin{bmatrix}
\bdd{A}_{EE} & \bdd{B}_{Ee} \\ 
\bdd{B}_{eE} & \bdd{0}
\end{bmatrix} - \begin{bmatrix}
\bdd{A}_{EI} & \bdd{B}_{E \iota} \\ 
\bdd{B}_{eI} & \bdd{0}
\end{bmatrix} \begin{bmatrix}
\bdd{A}_{II} & \bdd{B}_{I \iota} \\ 
\bdd{B}_{\iota I} & \bdd{0}
\end{bmatrix}^{-1} \begin{bmatrix}
\bdd{A}_{IE} & \bdd{B}_{Ie} \\ 
\bdd{B}_{\iota E} & \bdd{0}
\end{bmatrix}
\end{align}
and we have used the fact (see \cref{lem:matrix relations}) that the $(2,2)$ block of Schur complement matrix $\bdd{S}$ reduces to the zero matrix. The inverse of the matrix appearing in \cref{eq:Schur complement system,eq:schur complement matrix} is well-defined by \cref{thm:inf-sup global interior spaces}. After the degrees of freedom on the element interfaces are in hand, the interior degrees of freedom can be recovered by back substitution using the relation
\begin{align*}
\begin{bmatrix}
\vec{u}_I \\ \vec{p}_{\iota}
\end{bmatrix} = \begin{bmatrix}
\bdd{A}_{II} & \bdd{B}_{I \iota} \\ 
\bdd{B}_{\iota I} & \bdd{0}
\end{bmatrix}^{-1} \left( \begin{bmatrix}
\vec{f}_I \\ \vec{0}
\end{bmatrix} - \begin{bmatrix}
\bdd{A}_{IE} & \bdd{B}_{Ie} \\ 
\bdd{B}_{\iota E} & \bdd{0}
\end{bmatrix} \begin{bmatrix}
\vec{u}_E \\ \vec{p}_e
\end{bmatrix}  \right).
\end{align*}
The element interface degrees of freedom are obtained by solving the Schur complement system \cref{eq:Schur complement system}. The matrix $\bdd{S}$ defined in \cref{eq:schur complement matrix} is symmetric and indefinite, and inherits the one dimensional null space from the full system matrix \cref{eq:full system matrix}, again corresponding to the constant pressure mode. Similarly, the right hand side in \cref{eq:Schur complement system} inherits the consistency of the load vector meaning that \cref{eq:Schur complement system} is uniquely solvable up to a constant pressure mode. The indefiniteness of the problem coupled with the presence of a low dimensional null space suggests using a MINRES iterative solver \cite{Paige75} in conjunction with a suitable preconditioner. 

We seek a block diagonal matrix of the form
\begin{align}
\label{eq:preconditioner form}
\bdd{P} = \begin{bmatrix}
\bar{\bdd{A}} & \bdd{0} \\
\bdd{0} & \bar{\bdd{M}}
\end{bmatrix}
\end{align}
to precondition $\bdd{S}$, where $\bar{\bdd{A}}$ and $\bar{\bdd{M}}$ are symmetric positive definite matrices. The convergence of the MINRES algorithm with preconditioner $\bdd{P}^{-1}$ depends on the location of the nonzero eigenvalues of $\bdd{P}^{-1} \bdd{S}$ \cite[Remark 4.13 and \S 4.2.4]{Elman14}. In particular, let $\delta$, $\Delta$, $\theta$, and $\Theta$ be nonnegative constants such that
\begin{align*}
\delta \leq \frac{\vec{u}_E^T \widetilde{\bdd{A}} \vec{u}_E }{\vec{u}_E^T \bar{\bdd{A}} \vec{u}_E } \leq \Delta \ \ \ \forall \bdd{u} \in \bdd{V}_0 \quad \text{and} \quad \theta \leq \frac{\vec{q}_e^T \widetilde{\bdd{B}} \widetilde{\bdd{A}}^{-1} \widetilde{\bdd{B}}^T \vec{q}_e}{ \vec{q}_e^T \bar{\bdd{M}} \vec{q}_e} \leq \Theta \ \ \ \forall q \in Q_0.
\end{align*}
Then, by \cite[Theorem 4.7 and eq. (4.37)]{Elman14}, the eigenvalues of $\bdd{P}^{-1} \bdd{S}$ lie in the set
\begin{align}
\label{eq:intro evals location}
\left[-\Theta^2, \frac{1}{2}\left( \delta - \sqrt{\delta^2 + 4\delta \theta^2}\right)  \right] \cup \{0\} \cup \left[ \delta, \frac{1}{2} \left( \Delta + \sqrt{\Delta^2 + 4\Delta \Theta^2} \right) \right].
\end{align}
In order to use variational techniques like Additive Schwarz Methods to construct $\bar{\bdd{A}}$ and $\bar{\bdd{M}}$, we must first identify the appropriate variational setting of the Schur complement system \cref{eq:Schur complement system}. In particular, the Schur complement is posed over the subspaces spanned by the external degrees of freedom of $\bdd{V}_0 \times Q_0$, which are rather non-standard owing to the additional continuity imposed at noncorner vertices. \Cref{sec:exact sequences} gives a precise characterization of these spaces including new results showing that they form a discrete exact sequence property (\cref{thm:full complex}) and that they, like the spaces $\bdd{V}_0 \times Q_0$, are uniformly inf-sup stable in both $h$ and $k$ (\cref{thm:inf-sup global boundary spaces}).

\Cref{sec:stokes extension} defines the Stokes extension operator and its relation to the subspace splittings. \Cref{sec:schur complement variational} uses the results of the previous two sections to relate the matrix form of the Schur complement system to a variational problem. In \cref{sec:basis functions}, we present an explicit set of basis functions on the reference
element for the spaces $\bdd{V}$ and $Q$ and then detail how these are used in
the construction of the global basis functions. We
develop the additive Schwarz theory and construct the matrices $\bar{\bdd{A}}$ and $\bar{\bdd{M}}$ in \cref{sec:asm theory}, which is then applied to two numerical examples demonstrating in \cref{sec:numerics}. \Cref{sec:technical lemmas} contains technical lemmas related to the additive Schwarz theory.

\section{Subspace Splittings, Exact Sequences, and Stability}
\label{sec:exact sequences}

A key property of the mixed finite element pair $\bdd{V}_0 \times Q_0$ is the exactness of the sequence \cite[Theorem 2.6]{AinCP19StokesI} and \cite[\S 3.2]{Falk13}:
\begin{align}
\label{eq:exact sequence full spaces}
0 \xrightarrow{ \ \ \ \subset\ \ \ } \Sigma_ {0} \xrightarrow{\ \ \vcurl \ \ } \bdd{V}_{0} \xrightarrow{ \ \ \dive \ \ } Q_{0} \xrightarrow{\qquad} 0,
\end{align}
where $\vcurl = (\partial_y, -\partial_x)^T$ and $\Sigma_0$ is the space of $H^2(\Omega)$-conforming piecewise polynomials (see  \cite[\S 2]{AinCP19StokesI}) given by
\begin{align*}
\Sigma_0 &= \{ \phi \in {H}_0^2(\Omega) : \phi|_{K} \in \mathcal{P}_{k+1}(K) \ \forall K \in \mathcal{T}, \ \phi \text{ is $C^2$ at noncorner vertices} \}.
\end{align*}
The exact sequence property \cref{eq:exact sequence full spaces} was used in \cite[Theorem 2.2]{AinCP19StokesII} to obtain optimal error estimates for the velocity that were independent of the pressure error. In the remainder of this section, we seek exact sequences analogous to \cref{eq:exact sequence full spaces} that respect the separation of interior and exterior degrees of freedom. Such sequences will be used to both identify the variational problem associated with the Schur complement system \cref{eq:Schur complement system} and prove its uniform stability.

Before we begin, we introduce some notation. Let $\mathcal{V}$ denote the set of all element vertices, and partition $\mathcal{V}$ into: $\mathcal{V}_{C}$, the set of element vertices located at a vertex of the polygonal domain $\Omega$; $\mathcal{V}_{B}$, the set of remaining element vertices on the domain boundary $\Gamma$ which are not corner vertices; and $\mathcal{V}_I$, the set of element vertices in the interior of domain $\Omega$. Let $\mathcal{E}$ be the set of all element edges. Given an element $K \in \mathcal{T}$, $\mathcal{E}_K$ denotes the edges of $K$ and $\mathcal{V}_K$ denotes the vertices of $K$. Likewise, given a vertex $\bdd{a} \in \mathcal{V}$, $\mathcal{E}_{\bdd{a}}$ denotes the set of edges having $\bdd{a}$ as an endpoint and $\mathcal{T}_{\bdd{a}}$ the set of elements having $\bdd{a}$ as a vertex. We assume that $\mathcal{T}$ is a partition of the domain $\Omega$ into triangles such that the nonempty intersection of any two distinct elements from $\mathcal{T}$ is either a single common vertex or a single common edge of both elements, and there exists $\kappa > 0$ independent of $\mathcal{T}$ such that
\begin{align}
\label{eq:shape regularity}
\rho_K \geq \kappa h_K \quad \forall K \in \mathcal{T},
\end{align}
where $h_K := \mathrm{diam}(K)$ and $\rho_K$ is the diameter of the largest inscribed circle of $K$. The mesh size $h$ denotes the diameter of the largest element, i.e. $h := \max_{K \in \mathcal{T}} h_K$.

\subsection{Interior Subspaces}

We first examine the subspaces associated with the interior degrees of freedom given by
\begin{align*}
\Sigma_I &= \{ \phi \in \Sigma_0 : \phi|_{\partial K} = \partial_n \phi|_{\partial K} = 0, \ \forall K \in \mathcal{T} \} \\
\bdd{V}_I &= \{ \bdd{v} \in \bdd{V}_0 : \bdd{v}|_{\partial K} = \bdd{0}, \ \forall K \in \mathcal{T} \} \\
Q_{I} &= \left\{ q \in Q_0 : \int_{K} q \ d\bdd{x} = 0, \ q|_{K}(\bdd{a}) = 0, \ \forall \bdd{a} \in \mathcal{V}_K, \ \forall K \in \mathcal{T} \right\},
\end{align*}
which, in turn, may be decomposed into contributions from individual elements:
\begin{align}
\label{eq:interior subspace element decomposition}
\Sigma_I = \bigoplus_{K \in \mathcal{T}} \Sigma_{I}(K), \quad
\bdd{V}_I = \bigoplus_{K \in \mathcal{T}} \bdd{V}_I(K), \quad \text{and} \quad \quad Q_{I} = \bigoplus_{K \in \mathcal{T}} Q_{I}(K),
\end{align}
where
\begin{align*}
\Sigma_I(K) &:= \mathcal{P}_{k+1}(K) \cap H^2_0(K), \qquad \bdd{V}_{I}(K) := \bm{\mathcal{P}}_{k}(K) \cap \bdd{H}^1_0(K), \\
Q_{I}(K) &:= \left\{ q \in \mathcal{P}_{k-1}(K) \cap L^2_0(K) : q(\bdd{a}) = 0, \ \forall \bdd{a} \in \mathcal{V}_K \right\}.
\end{align*}
Both the element-level interior spaces and the corresponding interior spaces on a mesh form exact sequences:
\begin{theorem}
	\label{thm:interior space element exact sequence}
	The following sequences are exact:
	\begin{align}
	\label{eq:exact sequence interior element} 
	0 \xrightarrow{ \ \ \ \subset\ \ \ } \Sigma_ {I}(K) \xrightarrow{\ \ \vcurl \ \ } \bdd{V}_{I}(K) \xrightarrow{ \ \ \dive \ \ } Q_{I}(K) \xrightarrow{\qquad} 0, \quad \forall K \in \mathcal{T},
	\end{align}
	and
	\begin{align}
	\label{eq:exact sequence interior global} 
	0 \xrightarrow{ \ \ \ \subset\ \ \ } \Sigma_ {I} \xrightarrow{\ \ \vcurl \ \ } \bdd{V}_{I} \xrightarrow{ \ \ \dive \ \ } Q_{I} \xrightarrow{\qquad} 0.
	\end{align}
\end{theorem}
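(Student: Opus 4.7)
I would first establish the element-level sequence \cref{eq:exact sequence interior element} on a fixed triangle $K$, then deduce the global sequence \cref{eq:exact sequence interior global} from it using the direct sum decomposition \cref{eq:interior subspace element decomposition}: since $\vcurl$ and $\dive$ act locally, exactness is inherited termwise from the elementwise case.

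For the element-level statement, the argument has four steps. \emph{Well-definedness:} if $\phi \in \Sigma_I(K)$ then $\phi$ and $\partial_n\phi$ vanish on $\partial K$, whence $\nabla\phi \equiv 0$ on $\partial K$ and $\vcurl\phi \in \bdd{V}_I(K)$; if $\bdd{v} \in \bdd{V}_I(K)$ then the divergence theorem gives $\int_K \dive \bdd{v}\,d\bdd{x} = \int_{\partial K}\bdd{v}\cdot\bdd{n}\,ds = 0$, and at any vertex $\bdd{a}$ of $K$ the vanishing of $\bdd{v}$ along the two incident edges forces $\nabla\bdd{v}(\bdd{a}) = 0$, so $(\dive\bdd{v})(\bdd{a}) = 0$, confirming $\dive\bdd{v} \in Q_I(K)$. \emph{Injectivity of $\vcurl$:} $\vcurl\phi = 0$ implies $\phi$ is constant, and the trace $\phi|_{\partial K} = 0$ forces $\phi \equiv 0$. \emph{Exactness in the middle:} the inclusion $\mathrm{im}(\vcurl) \subset \ker(\dive)$ is immediate. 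Conversely, if $\bdd{v} \in \bdd{V}_I(K)$ with $\dive\bdd{v} = 0$, the Poincaré lemma on the simply connected domain $K$ produces $\phi \in H^2(K)$ with $\vcurl\phi = \bdd{v}$; $\phi$ is necessarily a polynomial of degree $\le k+1$, and since $\nabla\phi \equiv 0$ on $\partial K$, $\phi$ is constant on the connected curve $\partial K$, so after normalisation $\phi \in H^2_0(K) \cap \mathcal{P}_{k+1}(K) = \Sigma_I(K)$.

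\emph{Surjectivity of $\dive$ onto $Q_I(K)$.} This is the step I expect to be the main obstacle, since the other three steps are routine. My plan is to promote the containment $\dive\bdd{V}_I(K) \subset Q_I(K)$ to equality by a dimension count. Using the characterisations $\Sigma_I(K) = (\lambda_1\lambda_2\lambda_3)^2\,\mathcal{P}_{k-5}(K)$ and $\bdd{V}_I(K) = \lambda_1\lambda_2\lambda_3\,\bm{\mathcal{P}}_{k-3}(K)$ (with $\lambda_i$ the barycentric coordinates), together with the four independent linear functionals cutting out $Q_I(K)$ inside $\mathcal{P}_{k-1}(K)$, one finds
\begin{equation*}
\dim\Sigma_I(K) = \tfrac{1}{2}(k-3)(k-4), \quad \dim\bdd{V}_I(K) = (k-1)(k-2), \quad \dim Q_I(K) = \tfrac{1}{2}k(k+1) - 4,
\end{equation*}
and these satisfy $\dim\bdd{V}_I(K) = \dim\Sigma_I(K) + \dim Q_I(K)$. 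Combining this identity with the injectivity of $\vcurl$ and the kernel characterisation from the previous step, rank--nullity gives $\dim(\dive\bdd{V}_I(K)) = \dim Q_I(K)$, forcing equality. Alternatively, this is precisely the classical result of Scott and Vogelius identifying the image of $\dive$ on polynomial bubbles, which could be cited in lieu of the dimension count.

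Finally, for the global sequence \cref{eq:exact sequence interior global}, I would note that $\Sigma_I$, $\bdd{V}_I$, and $Q_I$ all decompose as direct sums over $K \in \mathcal{T}$ by \cref{eq:interior subspace element decomposition}, and that $\vcurl$ and $\dive$ preserve this decomposition; exactness then follows termwise from \cref{eq:exact sequence interior element}. The principal delicate point throughout is verifying that the vertex-vanishing condition built into $Q_I(K)$ exactly matches the natural constraint on $\dive\bdd{v}$ at vertices forced by $\bdd{v} \in \bdd{H}^1_0(K)$; everything else is standard boundary regularity and an application of the Poincaré lemma.
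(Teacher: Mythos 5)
Your proposal is correct, and it rests on the same dimension identity $\dim\Sigma_I(K)+\dim Q_I(K)=\dim\bdd{V}_I(K)$ and rank--nullity argument as the paper, but the logic is inverted in an interesting way. The paper takes surjectivity of $\dive:\bdd{V}_I(K)\to Q_I(K)$ as the imported primitive (citing \cite[Theorem 3.4]{AinCP19StokesI}) and then uses the dimension count to \emph{deduce} exactness at the middle, i.e.\ $\ker\dive=\vcurl\Sigma_I(K)$. You instead prove exactness at the middle directly from the Poincar\'e lemma on the simply connected triangle (checking that the potential $\phi$ is a polynomial of degree $k+1$ with $\nabla\phi\equiv 0$ on the connected curve $\partial K$, hence lies in $\Sigma_I(K)$ after normalisation), and then use the same dimension count to \emph{deduce} surjectivity of $\dive$. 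Your route is self-contained -- it avoids the external citation for surjectivity at the price of the explicit bubble factorisations $\Sigma_I(K)=(\lambda_1\lambda_2\lambda_3)^2\mathcal{P}_{k-5}(K)$ and $\bdd{V}_I(K)=\lambda_1\lambda_2\lambda_3\,\bm{\mathcal{P}}_{k-3}(K)$, which match the paper's stated dimensions -- whereas the paper's route leans on Part~I, where the surjectivity is in any case needed with a quantitative bound (\cref{thm:inf-sup global interior spaces}). Your verification that $\dive\bdd{v}$ vanishes at the vertices of $K$ for $\bdd{v}\in\bdd{H}^1_0(K)\cap\bm{\mathcal{P}}_k(K)$ (two independent tangential directions at each vertex force $\nabla\bdd{v}(\bdd{a})=\bdd{0}$) is the right justification for the containment $\dive\bdd{V}_I(K)\subseteq Q_I(K)$, which the paper leaves implicit; and your termwise derivation of the global sequence \cref{eq:exact sequence interior global} from \cref{eq:exact sequence interior element} via \cref{eq:interior subspace element decomposition} is exactly what the paper intends by ``along similar lines.''
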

\begin{proof}
	In view of $\vcurl H^2_0(K) \subset \bdd{H}^1_0(K)$, we have the relations $\vcurl \Sigma_{I}(K) \subset \bdd{V}_I(K)$,  $\vcurl \Sigma_I(K) \subseteq \ker \dive$, and by \cite[Theorem 3.4]{AinCP19StokesI}, $\dive \bdd{V}_I(K) = Q_{I}(K)$. Here, we consider div as a linear operator $\bdd{V}_I(K) \to Q_{I}(K)$. Moreover, for $\phi \in \Sigma_I(K)$, $\vcurl \phi \equiv 0$ if and only if $\phi \equiv 0$ and so $\dim \vcurl \Sigma_I(K) = \dim \Sigma_I(K)$. The dimension counts $\dim \Sigma_I(K) = \frac{k^2 - 7k + 12}{2}$, $\dim \bdd{V}_I(K) = k^2 - 3k + 2$, and $\dim Q_I(K) = \frac{k^2 + k - 8}{2}$
	reveal that $\dim \Sigma_I(K) + \dim Q_I(K) - \dim \bdd{V}_I(K) = 0$.
	By the rank-nullity theorem, we have
	\begin{align*}
	\dim \bdd{V}_I(K) = \dim \mathrm{Im} \dive + \dim \ker \dive \geq \dim Q_I(K) + \dim \Sigma_I(K) = \dim \bdd{V}_I(K).
	\end{align*}
	and so $\ker \dive = \vcurl \Sigma_I(K)$. Thus, the element-level sequence \cref{eq:exact sequence interior element} is exact. The exactness of the global spaces \cref{eq:exact sequence interior global} may be proved along similar lines using the exactness of the element level sequence \cref{eq:exact sequence interior element}.
\end{proof}
\Cref{thm:interior space element exact sequence} gives a useful decomposition of the interior spaces in terms of the $\vcurl$ operator:
\begin{corollary}
	\label{cor:V I decomposition}
	The spaces $\bdd{V}_I(K)$ and $\bdd{V}_I$ admit the following decompositions:
	\begin{align*}
	\label{eq:V I decomposition curl curl perp element}
	\bdd{V}_I(K) &= \vcurl \Sigma_I(K) \oplus \{ \bdd{v} \in \bm{\mathcal{P}}_{k-1}(K) \cap \bdd{H}^1_0(K), \ a(\bdd{v}, \vcurl \phi) = 0, \ \forall \phi \in \Sigma_I(K) \}
	\end{align*}
	and
	\begin{align}
	\label{eq:V I decomposition curl curl perp}
	\bdd{V}_I &= \vcurl \Sigma_I \oplus \{ \bdd{v} \in \bdd{V}_0 : \bdd{v}|_{\partial K} = \bdd{0}, \ \forall K \in \mathcal{T}, \ a(\bdd{v}, \vcurl \phi) = 0, \ \forall \phi \in \Sigma_I \}.
	\end{align}
\end{corollary}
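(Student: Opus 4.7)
The decomposition is a standard orthogonal splitting with respect to the bilinear form $a(\cdot,\cdot)$, so I would proceed as follows.

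First, from the element-level exact sequence \cref{eq:exact sequence interior element} established in \cref{thm:interior space element exact sequence}, the inclusion $\vcurl \Sigma_I(K) \subset \bdd{V}_I(K)$ holds. Moreover, since $\bdd{V}_I(K) \subset \bdd{H}^1_0(K)$, the restriction of the symmetric bilinear form $a(\bdd{u},\bdd{v}) = (\nabla \bdd{u}, \nabla \bdd{v})$ to $\bdd{V}_I(K)$ is positive definite by the Poincar\'e inequality, and thus defines an inner product on this finite-dimensional space. In particular, $a(\vcurl \phi, \vcurl \phi)=0$ together with $\phi \in \Sigma_I(K)\subset H^2_0(K)$ forces $\vcurl \phi \equiv \bdd{0}$, and since curl is injective on $\Sigma_I(K)$ (as noted in the proof of \cref{thm:interior space element exact sequence}), $\phi \equiv 0$. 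Hence $a(\vcurl \cdot, \vcurl \cdot)$ is itself an inner product on $\Sigma_I(K)$.

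Next, define the candidate complementary subspace
\begin{align*}
W(K) := \{ \bdd{v} \in \bdd{V}_I(K) : a(\bdd{v}, \vcurl \phi) = 0 \ \ \forall \phi \in \Sigma_I(K) \}.
\end{align*}
Given any $\bdd{u} \in \bdd{V}_I(K)$, the problem of finding $\phi_{\bdd{u}} \in \Sigma_I(K)$ such that $a(\vcurl \phi_{\bdd{u}}, \vcurl \psi) = a(\bdd{u}, \vcurl \psi)$ for every $\psi \in \Sigma_I(K)$ is uniquely solvable by the Lax--Milgram lemma applied to the finite-dimensional inner product space $(\Sigma_I(K), a(\vcurl \cdot, \vcurl \cdot))$. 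Setting $\bdd{w} := \bdd{u} - \vcurl \phi_{\bdd{u}}$ gives $\bdd{w} \in W(K)$ by construction, which yields the sum $\bdd{V}_I(K) = \vcurl \Sigma_I(K) + W(K)$. The sum is direct because any element in the intersection $\vcurl \Sigma_I(K) \cap W(K)$ is $a$-orthogonal to itself, hence zero by the positive-definiteness of $a$ on $\bdd{H}^1_0(K)$. Together these establish the element-level decomposition.

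Finally, the global decomposition \cref{eq:V I decomposition curl curl perp} follows by repeating this argument verbatim, using that $\vcurl \Sigma_I \subset \bdd{V}_I$ from the global interior exact sequence \cref{eq:exact sequence interior global}, that $a(\cdot,\cdot)$ is an inner product on $\bdd{V}_I \subset \bdd{H}^1_0(\Omega)$ by the Poincar\'e inequality on $\Omega$, and that $\vcurl$ is injective on $\Sigma_I \subset H^2_0(\Omega)$. There is no real obstacle here; the only point requiring care is the verification that $a(\vcurl \cdot, \vcurl \cdot)$ is coercive on $\Sigma_I(K)$ and on $\Sigma_I$ so that the Lax--Milgram step produces the desired $\phi_{\bdd{u}}$, and this is immediate from the injectivity of $\vcurl$ on these spaces combined with finite-dimensionality (or, globally, the Poincar\'e inequality).
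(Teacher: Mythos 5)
Your proof is correct and matches the paper's intent: the paper states this result as an immediate consequence of \cref{thm:interior space element exact sequence} without giving a separate proof, and the standard $a(\cdot,\cdot)$-orthogonal-complement argument you give (well-posedness of the projection onto $\vcurl\Sigma_I(K)$ via positive definiteness of $a$ on $\bdd{H}^1_0(K)$ and injectivity of $\vcurl$ on $\Sigma_I(K)$, then directness of the sum) is exactly what is needed. The only discrepancy is that your complementary space sits inside $\bm{\mathcal{P}}_{k}(K)\cap\bdd{H}^1_0(K)=\bdd{V}_I(K)$ whereas the corollary as printed writes $\bm{\mathcal{P}}_{k-1}(K)$, which appears to be a typo in the paper rather than an error on your part.
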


The next result concerns the stability of the interior mixed finite element pair $\bdd{V}_{I} \times Q_{I}$.
\begin{theorem}
	\label{thm:inf-sup global interior spaces}
	Let $K \in \mathcal{T}$ and let $\beta$ be the discrete inf-sup constant appearing in \cref{eq:inf-sup global spaces}. If $q \in Q_{I}(K)$, then there exists $\bdd{v} \in \bdd{V}_I(K)$ such that $\dive \bdd{v} = q$ and
	\begin{align}
	\label{eq:invert div element}
	h_K^{-1} \|\bdd{v}\|_{\bdd{L}^2(K)} + |\bdd{v}|_{\bdd{H}^1(K)} \leq \beta^{-1} \|q\|_{L^2(K)}.
	\end{align}
	Consequently, (i) the spaces $\bdd{V}_I \times Q_{I}$ are uniformly inf-sup stable:
	\begin{align}
	\label{eq:inf-sup global interior spaces}
	\inf_{0 \neq q \in Q_{I}} \sup_{\bdd{0} \neq \bdd{v} \in \bdd{V}_I} \frac{b(\bdd{v}, q)}{|\bdd{v}|_{\bdd{H}^1(\Omega)} \|q\|_{L^2(\Omega)} } \geq \beta,
	\end{align}
	and (ii) the matrix $\begin{bmatrix}
	\bdd{A}_{II} & \bdd{B}_{I\iota} \\ 
	\bdd{B}_{\iota I} & \bdd{0}
	\end{bmatrix}$ is invertible, and hence the Schur complement $\bdd{S}$ appearing in \cref{eq:Schur complement system} is well-defined by the formula \cref{eq:schur complement matrix}.
\end{theorem}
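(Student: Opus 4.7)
The overall plan is to treat the element-level divergence-inversion bound \cref{eq:invert div element} as the core ingredient, from which parts (i) and (ii) will follow by, respectively, a simple patching argument and standard saddle-point theory. The exact sequence results just established in \cref{thm:interior space element exact sequence} provide the requisite surjectivity of $\dive$, so the real work lies in obtaining the \emph{uniform} (in $h_K$ and $k$) bound on the right inverse.

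For \cref{eq:invert div element}, given $q \in Q_I(K)$, the exactness of \cref{eq:exact sequence interior element} guarantees the existence of some $\bdd{v} \in \bdd{V}_I(K)$ with $\dive \bdd{v} = q$. To pin down the constant $\beta^{-1}$, I would revisit the construction behind \cref{thm:inf-sup global spaces}/\cite[Corollary 3.2]{AinCP19StokesI}: that proof constructs a right inverse of $\dive$ by decomposing $q$ into a vertex-value piece, an element-average piece, and an element-interior piece, with the interior piece handled by an element-local inverse bounded by $\beta^{-1}$ in the $H^1$-seminorm. A function $q \in Q_I(K)$ is, by definition, exactly a pure ``interior'' piece on $K$ (zero vertex values and zero elementwise mean), so applying that same local inverse gives the $|\bdd{v}|_{\bdd{H}^1(K)} \leq \beta^{-1} \|q\|_{L^2(K)}$ part of \cref{eq:invert div element}. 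The $h_K^{-1}\|\bdd{v}\|_{\bdd{L}^2(K)}$ contribution is then immediate from the Poincar\'{e} inequality on $\bdd{H}^1_0(K)$ after absorbing a shape-regularity constant (via \cref{eq:shape regularity}) into the bound.

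For (i), I would use the direct sum decomposition \cref{eq:interior subspace element decomposition}: given $q \in Q_I$, the restrictions $q|_K$ automatically lie in $Q_I(K)$ since the defining constraints (zero mean on $K$, zero values at vertices of $K$) are purely elementwise. Apply the element-level result to each $q|_K$ to obtain $\bdd{v}_K \in \bdd{V}_I(K)$, then extend each by zero and sum to produce $\bdd{v} \in \bdd{V}_I$ (legitimate by \cref{eq:interior subspace element decomposition}) satisfying $\dive \bdd{v} = q$. Squaring and summing the element bounds yields $|\bdd{v}|_{\bdd{H}^1(\Omega)}^2 \leq \beta^{-2}\|q\|_{L^2(\Omega)}^2$, from which \cref{eq:inf-sup global interior spaces} follows by taking $-\bdd{v}$ as the trial function in the supremum. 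For (ii), the matrix in question is the Galerkin matrix of the saddle-point form $a(\cdot,\cdot) + b(\cdot,\cdot)$ on the pair $\bdd{V}_I \times Q_I$; since $a$ is coercive on $\bdd{V}_I \subset \bdd{H}^1_0(\Omega)$ via Poincar\'{e} and the inf-sup condition on the pair was just established in (i), the Brezzi conditions hold and the matrix is invertible, so $\bdd{S}$ is well-defined by \cref{eq:schur complement matrix}.

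The main obstacle is clearly the first step: identifying the constant as precisely $\beta^{-1}$ rather than a larger multiple. This is not something one can deduce abstractly from the global inf-sup (a single element is not corner-split in its own right, so \cref{thm:inf-sup global spaces} cannot be invoked directly on $K$); it requires looking inside the Part~I construction and observing that its element-local interior piece is already what we need. Everything after that is bookkeeping: patching by the direct sum, Poincar\'{e} for the $L^2$ term, and Brezzi for invertibility.
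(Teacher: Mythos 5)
Your proposal follows the paper's proof essentially verbatim: the paper obtains \cref{eq:invert div element} by directly citing the element-local divergence inverse of Part~I (\cite[Theorem 3.5]{AinCP19StokesI}), which is exactly the ``interior piece'' of the construction you describe, then proves (i) by patching over elements via \cref{eq:interior subspace element decomposition} and summing the squared bounds, and deduces (ii) from ellipticity of $a(\cdot,\cdot)$ together with the inf-sup condition, just as you do. The only cosmetic difference is that the cited theorem delivers the scaled $L^2$ term in \cref{eq:invert div element} directly, so there is no need to reintroduce it via Poincar\'e (which, as stated, would degrade the constant from $\beta^{-1}$ to a shape-regularity-dependent multiple of it).
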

\begin{proof}
	Let $q \in Q_{I}(K)$. Then, \cite[Theorem 3.5]{AinCP19StokesI} gives the existence of $\bdd{v} \in \bdd{V}_{I}(K)$ with $\dive \bdd{v} = q$ satisfying the estimate \cref{eq:invert div element}. 
	
	Now let $q \in Q_{I}$ be decomposed as in \cref{eq:interior subspace element decomposition} so that $q = \sum_{K \in \mathcal{T}} q_K$ with $q_K \in Q_{I}(K)$. By the first statement in the theorem, there exists $\bdd{v}_K \in \bdd{V}_I(K)$ with $\dive \bdd{v}_K = -q_K$ satisfying \cref{eq:invert div element}. Hence, $\bdd{v} := \sum_{K \in \mathcal{T}} \bdd{v}_K$ satisfies $\dive \bdd{v} = -q$ and
	\begin{align*}
	|\bdd{v}|_{\bdd{H}^1(\Omega)}^2 &= \sum_{K \in \mathcal{T}} |\bdd{v}_K|_{\bdd{H}^1(\Omega)}^2 \leq \beta^{-2} \sum_{K \in \mathcal{T}} \|q_K\|_{L^2(\Omega)}^2 = \beta^{-2} \|q\|_{L^2(\Omega)}^2,
	\end{align*}
	from which \cref{eq:inf-sup global interior spaces} immediately follows.
	(ii) follows at once thanks to the ellipticity of $a(\cdot, \cdot)$ and the inf-sup condition \cref{eq:inf-sup global interior spaces}.
\end{proof}

\subsection{Boundary Subspaces}

The subspaces $\tilde{\Sigma}_E$, $\tilde{\bdd{V}}_E$, and $\tilde{Q}_E$ are defined as follows
\begin{subequations}
	\label{eq:tilde spaces definition}
	\begin{align}
	\label{eq:tilde sigma e defintion}
	\tilde{\Sigma}_E &:= \{ \phi \in \Sigma_0 : a(\vcurl \phi, \vcurl \psi ) = 0, \ \forall \psi \in \Sigma_I \} \\
	\label{eq:tilde V E definition}
	\tilde{\bdd{V}}_E &:= \{ \bdd{v} \in \bdd{V}_0 : \dive \bdd{v} \in \tilde{Q}_E, \ a(\bdd{v}, \vcurl \psi) = 0, \ \forall \psi \in \Sigma_I \} \\
	\label{eq:tilde q e definition}
	\tilde{Q}_E &:= \{ q \in Q_0 : (q, r) = 0, \ \forall r \in Q_{I} \},
	\end{align}
\end{subequations}
and correspond to degrees of freedom on the element boundaries. More precisely, we have:
\begin{theorem}
	\label{thm:tilde interior decomposition}
	The following decompositions hold:
	\begin{align}
	\label{eq:tilde interior decomposition}
	\Sigma_0 = \Sigma_I \oplus \tilde{\Sigma}_E, \quad \bdd{V}_0 = \bdd{V}_I \oplus \tilde{\bdd{V}}_E, \quad \text{and} \quad Q_0 = Q_{I} \oplus \tilde{Q}_E.
	\end{align}
\end{theorem}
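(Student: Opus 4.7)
The plan proceeds from easiest to hardest among the three splittings. For the pressure, the decomposition $Q_0 = Q_I \oplus \tilde{Q}_E$ follows immediately from \cref{eq:tilde q e definition}, since $\tilde{Q}_E$ is by construction the $L^2$-orthogonal complement of the finite-dimensional subspace $Q_I$ inside $Q_0$. For the $H^2$-conforming space, I would first observe that $(\phi, \psi) \mapsto a(\vcurl \phi, \vcurl \psi)$ is an inner product on $\Sigma_0$: if $a(\vcurl \phi, \vcurl \phi) = 0$, then $\vcurl \phi$ is constant, and since $\phi \in H^2_0(\Omega)$ forces $\nabla \phi$ to vanish on $\Gamma$, we obtain $\nabla \phi = \bdd{0}$ and hence $\phi \equiv 0$. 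The subspace $\tilde{\Sigma}_E$ from \cref{eq:tilde sigma e defintion} is then precisely the orthogonal complement of $\Sigma_I$ in $\Sigma_0$ relative to this inner product, yielding $\Sigma_0 = \Sigma_I \oplus \tilde{\Sigma}_E$.

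The velocity splitting is the substantive step because $\tilde{\bdd{V}}_E$ is defined via two orthogonality-type conditions in \cref{eq:tilde V E definition}. Given $\bdd{v} \in \bdd{V}_0$, I would construct $\bdd{v}_I \in \bdd{V}_I$ such that $\tilde{\bdd{v}}_E := \bdd{v} - \bdd{v}_I$ lies in $\tilde{\bdd{V}}_E$ in two stages. First, use the already established $Q_0$ splitting to write $\dive \bdd{v} = q_I + \tilde{q}_E$, and appeal to $\dive \bdd{V}_I = Q_I$ from \cref{thm:interior space element exact sequence} to pick any $\bdd{w}_I \in \bdd{V}_I$ with $\dive \bdd{w}_I = q_I$. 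Second, using the inner product property above restricted to the finite-dimensional subspace $\Sigma_I \subset \Sigma_0$, solve for the unique $\phi_I \in \Sigma_I$ with
\begin{align*}
a(\vcurl \phi_I, \vcurl \psi) = a(\bdd{v} - \bdd{w}_I, \vcurl \psi) \quad \forall \psi \in \Sigma_I,
\end{align*}
and set $\bdd{v}_I := \bdd{w}_I + \vcurl \phi_I \in \bdd{V}_I$. Since $\dive \vcurl \phi_I \equiv 0$, we still have $\dive \bdd{v}_I = q_I$, whence $\dive \tilde{\bdd{v}}_E = \tilde{q}_E \in \tilde{Q}_E$; and $a(\tilde{\bdd{v}}_E, \vcurl \psi) = 0$ for all $\psi \in \Sigma_I$ holds by the choice of $\phi_I$. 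Both defining conditions for membership in $\tilde{\bdd{V}}_E$ are thus met.

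For directness of the sum, if $\bdd{v} \in \bdd{V}_I \cap \tilde{\bdd{V}}_E$, then $\dive \bdd{v} \in Q_I$ (as $\bdd{v} \in \bdd{V}_I$) and $\dive \bdd{v} \in \tilde{Q}_E$ (by definition of $\tilde{\bdd{V}}_E$), so the pressure splitting forces $\dive \bdd{v} = 0$. The interior exact sequence \cref{eq:exact sequence interior global} then supplies $\phi \in \Sigma_I$ with $\bdd{v} = \vcurl \phi$, and testing the orthogonality condition at $\psi = \phi$ gives $a(\vcurl \phi, \vcurl \phi) = 0$, so $\phi = 0$ and $\bdd{v} = \bdd{0}$.

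The main obstacle is coordinating these ingredients correctly; each individual step invokes a result already established in this section, but the velocity argument relies crucially on the fact that correcting $\bdd{w}_I$ by an element of $\vcurl \Sigma_I$ leaves the divergence invariant while providing exactly enough freedom to enforce the remaining $a(\cdot, \vcurl \psi)$ orthogonality, which is the structure revealed by \cref{cor:V I decomposition}.
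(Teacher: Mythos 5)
Your proposal is correct, and all three splittings are handled soundly. For the pressure and stream-function spaces you argue exactly as the paper does: both follow because $\tilde{Q}_E$ and $\tilde{\Sigma}_E$ are defined as orthogonal complements of $Q_I$ and $\Sigma_I$ with respect to the $L^2$ inner product and the (coercive) form $a(\vcurl\cdot,\vcurl\cdot)$, respectively. For the velocity splitting your mechanism differs in a worthwhile way. The paper first invokes the global splitting $\bdd{V}_0 = \vcurl\Sigma_0 \oplus (\vcurl\Sigma_0)^{\perp}$ coming from the exact sequence, writes $\bdd{v} = \vcurl(\phi_I + \tilde{\phi}_E) + \bdd{v}_{\perp}$, and then uses \cref{cor:V I decomposition} to select an interior field $\bdd{w}$ with $\dive\bdd{w} = q_I$ that is \emph{already} $a$-orthogonal to $\vcurl\Sigma_I$, so that $\bdd{v}_I = \vcurl\phi_I + \bdd{w}$ works. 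You instead take an arbitrary $\bdd{w}_I \in \bdd{V}_I$ with $\dive\bdd{w}_I = q_I$ (using $\dive\bdd{V}_I = Q_I$ from \cref{thm:interior space element exact sequence}) and then restore the orthogonality by solving the coercive finite-dimensional Galerkin problem $a(\vcurl\phi_I,\vcurl\psi) = a(\bdd{v}-\bdd{w}_I,\vcurl\psi)$ on $\Sigma_I$, exploiting that the correction $\vcurl\phi_I$ is divergence-free. This is more self-contained: it bypasses both the global $\vcurl\Sigma_0 \oplus (\vcurl\Sigma_0)^{\perp}$ splitting and \cref{cor:V I decomposition}, needing only surjectivity of $\dive:\bdd{V}_I\to Q_I$ and coercivity of $a(\vcurl\cdot,\vcurl\cdot)$ on $\Sigma_I$. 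You also verify directness of the velocity sum explicitly (via $Q_I\cap\tilde{Q}_E=\{0\}$ and the interior exact sequence), a point the paper's proof leaves implicit; that is a genuine improvement in completeness rather than a correction.
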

\begin{proof}
	The decompositions of $\Sigma_{0}$ and $Q_0$ follow immediately using the orthogonality conditions in the definition of the spaces \cref{eq:tilde sigma e defintion,eq:tilde q e definition}. The decomposition of the velocity space $\bdd{V}_0$ is more involved. We first use the exact sequence \cref{eq:exact sequence full spaces} to write:
	\begin{align}
	\label{eq:v0 subspace splitting curl curl perp}
	\bdd{V}_0 &= \vcurl \Sigma_0 \oplus (\vcurl \Sigma_0)^{\perp},
	\end{align}
	where $(\vcurl \Sigma_0)^{\perp} := \{ \bdd{u} \in \bdd{V}_0 : a(\bdd{u}, \vcurl \psi ) = 0, \ \forall \psi \in \Sigma_0 \}.$
	Now, let $\bdd{v} \in \bdd{V}_0$ be given. By the decomposition \cref{eq:v0 subspace splitting curl curl perp} and the decomposition of $\Sigma_0$ in \cref{eq:tilde interior decomposition}, there exists $\phi_I \in \Sigma_I$, $\tilde{\phi}_E \in \tilde{\Sigma}_E$, and $\bdd{v}_{\perp} \in (\vcurl \Sigma_0)^{\perp}$ such that $\bdd{v} = \vcurl (\phi_I + \tilde{\phi}_E) + \bdd{v}_{\perp}$. We decompose the divergence analogously: $\dive \bdd{v} = q_{I} + \tilde{q}_E$ with $q_{I} \in Q_{I}$ and $\tilde{q}_E \in \tilde{Q}_E$. Thanks to the exact sequence \cref{eq:exact sequence interior global} and the decomposition \cref{eq:V I decomposition curl curl perp}, there exists $\bdd{w} \in \bdd{V}_I$ such that $\dive \bdd{w} = q_{I}$ and $\bdd{w} \in \{ \bdd{v} \in \bdd{V}_I : \ a(\bdd{v}, \vcurl \psi) = 0, \ \forall \psi \in \Sigma_I \}$. Then, $\bdd{v}_I := \vcurl \phi_I + \bdd{w}$ satisfies $\bdd{v}_I \in \bdd{V}_I$ and $\dive \bdd{v}_I = q_{I}$. Consequently, $\tilde{\bdd{v}}_E := \bdd{v} - \bdd{v}_I = \vcurl \tilde{\phi}_E + \bdd{v}_{\perp} - \bdd{w}$ satisfies $\dive \tilde{\bdd{v}}_E = \dive (\bdd{v} - \bdd{v}_I) = \tilde{q}_E \in \tilde{Q}_E$ and
	\begin{align*}
	a(\tilde{\bdd{v}}_E, \vcurl \psi) = a(\vcurl \tilde{\phi}_E, \vcurl \psi) + a(\bdd{v}_{\perp}, \vcurl \psi) + a(\bdd{w}, \vcurl \psi) = 0, \ \forall \psi \in \Sigma_I
	\end{align*} 
	by construction. Thus, $\tilde{\bdd{v}}_E \in \tilde{\bdd{V}}_E$, which completes the proof.
\end{proof}

\cref{thm:tilde interior decomposition} means that the decompositions in the columns of the following complex \cref{eq:full complex} are valid. The next result shows that the rows form exact sequences:
\begin{theorem}
	\label{thm:full complex}
	Each row the of the following complex is an exact sequence
	\begin{subequations}
	\label{eq:full complex}
	\begin{alignat}{9}
	\label{eq:exact sequence full spaces 2}
	&0 \quad&& \xrightarrow{ \ \ \ \subset\ \ \ } \quad && \Sigma_ {0} \quad && \xrightarrow{\ \ \vcurl \ \ } \quad && \bdd{V}_{0} \quad && \xrightarrow{ \ \ \dive \ \ } \quad && Q_{0} \quad && \xrightarrow{\qquad} \quad & 0 \\
	& && && \shortparallel_{\phantom{0}} && && \shortparallel_{\phantom{0}} && && \shortparallel_{\phantom{0}} && && \notag \\
	\label{eq:exact sequence interior spaces 2}
	&0 && \xrightarrow{ \ \ \ \subset\ \ \ } && \Sigma_ {I} && \xrightarrow{\ \ \vcurl \ \ } && \bdd{V}_{I} && \xrightarrow{ \ \ \dive \ \ } && Q_{I} && \xrightarrow{\qquad} && 0 \\
	&  && && \oplus_{\phantom{0}} && && \oplus_{\phantom{0}} && && \oplus_{\phantom{0}} && && \notag \\
	\label{eq:exact sequence exterior spaces}
	&0 && \xrightarrow{ \ \ \ \subset\ \ \ } && \tilde{\Sigma}_ {E} && \xrightarrow{\ \ \vcurl \ \ } && \tilde{\bdd{V}}_{E} && \xrightarrow{ \ \ \dive \ \ } && \tilde{Q}_{E} && \xrightarrow{\qquad} && 0 
	\end{alignat}
	\end{subequations}
	where the exterior spaces $\tilde{\Sigma}_E$, $\tilde{\bdd{V}}_E$, and $\tilde{Q}_E$ are given by \cref{eq:tilde spaces definition}.
\end{theorem}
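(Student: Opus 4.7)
The plan is to observe that two of the three rows come for free and then establish the third by a short diagram chase. The top row \cref{eq:exact sequence full spaces 2} is exact by \cite[Theorem 2.6]{AinCP19StokesI} (equivalently \cref{eq:exact sequence full spaces}), and the middle row \cref{eq:exact sequence interior spaces 2} is exactly \cref{thm:interior space element exact sequence}. All that remains is exactness of the exterior row \cref{eq:exact sequence exterior spaces}, and for this I would use the splittings just established in \cref{thm:tilde interior decomposition} together with the top-row exactness.

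Injectivity of $\vcurl$ on $\tilde\Sigma_E$ and the inclusion $\vcurl\tilde\Sigma_E\subset\tilde{\bdd V}_E$ are essentially built into the definitions: any $\phi\in\tilde\Sigma_E\subset H^2_0(\Omega)$ with $\vcurl\phi=0$ is a constant that vanishes on $\Gamma$, hence zero; and $\vcurl\phi$ automatically lies in $\bdd V_0$, has zero divergence (so $\dive\vcurl\phi=0\in\tilde Q_E$), and satisfies the orthogonality $a(\vcurl\phi,\vcurl\psi)=0$ for all $\psi\in\Sigma_I$ by the very definition \cref{eq:tilde sigma e defintion} of $\tilde\Sigma_E$.

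The substantive steps are exactness at $\tilde{\bdd V}_E$ and surjectivity of $\dive$ onto $\tilde Q_E$. For the first, I would take $\bdd v\in\tilde{\bdd V}_E$ with $\dive\bdd v=0$, lift it to $\phi\in\Sigma_0$ via the top row, and split $\phi=\phi_I+\tilde\phi_E$ using $\Sigma_0=\Sigma_I\oplus\tilde\Sigma_E$. Testing the orthogonality condition defining $\tilde{\bdd V}_E$ against $\psi=\phi_I\in\Sigma_I$ and subtracting the term $a(\vcurl\tilde\phi_E,\vcurl\phi_I)=0$ (which vanishes because $\tilde\phi_E\in\tilde\Sigma_E$) yields $a(\vcurl\phi_I,\vcurl\phi_I)=0$; coercivity of $a(\cdot,\cdot)$ on $\bdd H^1_0(\Omega)$ forces $\vcurl\phi_I=0$, hence $\phi_I=0$ and $\bdd v=\vcurl\tilde\phi_E\in\vcurl\tilde\Sigma_E$. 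Surjectivity is dispatched the same way: any $\tilde q_E\in\tilde Q_E$ lifts to some $\bdd v\in\bdd V_0$ with $\dive\bdd v=\tilde q_E$ via the top row; splitting $\bdd v=\bdd v_I+\tilde{\bdd v}_E$ and noting that $\dive\bdd v_I\in Q_I$ while $\dive\tilde{\bdd v}_E\in\tilde Q_E$, the uniqueness of the decomposition $Q_0=Q_I\oplus\tilde Q_E$ forces $\dive\tilde{\bdd v}_E=\tilde q_E$. I do not expect a genuine obstacle: all the structural work was assembled in \cref{thm:tilde interior decomposition}, and the only mildly delicate point is using $a$-orthogonality to peel off the interior stream function, which is essentially a one-line computation.
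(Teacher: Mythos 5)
Your proposal is correct, but for the exterior row it takes a genuinely different route from the paper. The paper disposes of \cref{eq:exact sequence exterior spaces} by a dimension count: the splittings in \cref{thm:tilde interior decomposition} together with the exactness of the first two rows give $\dim \tilde{\Sigma}_E + \dim \tilde{Q}_E - \dim \tilde{\bdd{V}}_E = 0$, and then the inclusions $\vcurl \tilde{\Sigma}_E \subset \tilde{\bdd{V}}_E$ and $\dive \tilde{\bdd{V}}_E \subseteq \tilde{Q}_E$ force exactness via the same rank--nullity argument used in \cref{thm:interior space element exact sequence}. You instead run an explicit diagram chase: lifting a divergence-free $\bdd{v} \in \tilde{\bdd{V}}_E$ through the top row, splitting the stream function as $\phi_I + \tilde{\phi}_E$, and using the $a$-orthogonality in \cref{eq:tilde sigma e defintion,eq:tilde V E definition} plus coercivity to kill $\phi_I$; and for surjectivity, splitting a full-space lift of $\tilde{q}_E$ and invoking uniqueness of the decomposition $Q_0 = Q_I \oplus \tilde{Q}_E$. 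Both arguments lean on \cref{thm:tilde interior decomposition} as the structural input. The paper's count is shorter given that the dimension identities are already in hand; your chase is self-contained (it never needs the dimension formulas), identifies the preimages explicitly, and in fact partially retraces the construction inside the proof of \cref{thm:tilde interior decomposition}. All the individual steps you sketch check out: injectivity of $\vcurl$ on $\tilde{\Sigma}_E \subset H^2_0(\Omega)$, the membership $\vcurl \tilde{\Sigma}_E \subset \tilde{\bdd{V}}_E$, the one-line orthogonality computation, and the use of $\dive \bdd{V}_I = Q_I$ from the already-established middle row.
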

\begin{proof}
\cite[Theorem 2.6]{AinCP19StokesI} gives the exactness of \cref{eq:exact sequence full spaces 2}
	while \cref{thm:interior space element exact sequence} gives the exactness of \cref{eq:exact sequence interior spaces 2}. Moreover, the decomposition \cref{eq:tilde interior decomposition} and the exactness the sequences \cref{eq:exact sequence full spaces 2,eq:exact sequence interior spaces 2}  imply that $\dim \tilde{\Sigma}_E + \dim \tilde{Q}_E - \dim \tilde{\bdd{V}}_E = 0.$ Since $\vcurl \tilde{\Sigma}_E \subset \tilde{\bdd{V}}_E$ and $\dive \tilde{\bdd{V}}_E \subseteq \tilde{Q}_E$, we conclude that the sequence \cref{eq:exact sequence exterior spaces} is exact using analogous arguments to those used in \cref{thm:interior space element exact sequence}.
\end{proof}
The exactness of the final row in \cref{eq:full complex} gives the following analogue of \cref{cor:V I decomposition} for the exterior velocity space:
\begin{corollary}
	\label{cor:tilde V E decomposition}
	The exterior velocity space $\tilde{\bdd{V}}_E$ admits the following decomposition: $
	\tilde{\bdd{V}}_E = \vcurl \tilde{\Sigma}_{E} \oplus \{ \bdd{v} \in \bdd{V}_0 : \dive \bdd{v} \in \tilde{Q}_E, \ a(\bdd{v}, \vcurl \phi) = 0, \ \forall \phi \in \Sigma_0 \}$.
\end{corollary}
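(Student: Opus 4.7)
The plan is to mimic the proof of \cref{cor:V I decomposition}: decompose $\tilde{\bdd{V}}_E$ into $\vcurl \tilde{\Sigma}_E$ and its $a$-orthogonal complement inside $\tilde{\bdd{V}}_E$, exploiting the fact that $a(\vcurl \cdot, \vcurl \cdot)$ induces an inner product on $\Sigma_0$ (and a fortiori on $\tilde{\Sigma}_E$) thanks to the injectivity of $\vcurl$ on $\Sigma_0 \subset H^2_0(\Omega)$, which follows from the exactness of the top row of \cref{eq:full complex}. Throughout I abbreviate the alleged complement by
\begin{equation*}
W := \{ \bdd{v} \in \bdd{V}_0 : \dive \bdd{v} \in \tilde{Q}_E, \ a(\bdd{v}, \vcurl \phi) = 0, \ \forall \phi \in \Sigma_0 \}.
\end{equation*}

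First I would dispose of the two easy pieces. The inclusion $W \subset \tilde{\bdd{V}}_E$ is immediate, since requiring $a$-orthogonality against all of $\Sigma_0$ is strictly stronger than against $\Sigma_I$ alone. For the trivial intersection $\vcurl \tilde{\Sigma}_E \cap W = \{\bdd{0}\}$, if $\bdd{v} = \vcurl \phi$ with $\phi \in \tilde{\Sigma}_E$ lies in $W$, then testing the $a$-orthogonality condition against $\phi$ itself gives $0 = a(\vcurl \phi, \vcurl \phi) = |\vcurl \phi|_{\bdd{H}^1(\Omega)}^2$, so $\bdd{v} = \vcurl \phi = \bdd{0}$.

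The main step is the spanning property. Given $\bdd{v} \in \tilde{\bdd{V}}_E$, I would determine $\tilde{\phi}_E \in \tilde{\Sigma}_E$ from the variational problem $a(\vcurl \tilde{\phi}_E, \vcurl \psi) = a(\bdd{v}, \vcurl \psi)$ for all $\psi \in \tilde{\Sigma}_E$. This has a unique solution because $(\phi, \psi) \mapsto a(\vcurl \phi, \vcurl \psi)$ is an inner product on the finite-dimensional space $\tilde{\Sigma}_E$. Setting $\bdd{w} := \bdd{v} - \vcurl \tilde{\phi}_E$, the identity $\dive \vcurl \equiv 0$ gives $\dive \bdd{w} = \dive \bdd{v} \in \tilde{Q}_E$, so only the orthogonality condition against $\vcurl \Sigma_0$ remains.

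The subtle step, and the one I expect to require the most care, is verifying that $\bdd{w}$ is $a$-orthogonal to $\vcurl \phi$ for \emph{all} $\phi \in \Sigma_0$, not merely for $\phi \in \tilde{\Sigma}_E$ as enforced by construction. The idea is to exploit the splitting $\Sigma_0 = \Sigma_I \oplus \tilde{\Sigma}_E$ from \cref{thm:tilde interior decomposition} and check the $\Sigma_I$ contribution separately: for $\psi_I \in \Sigma_I$, one has $a(\bdd{v}, \vcurl \psi_I) = 0$ because $\bdd{v} \in \tilde{\bdd{V}}_E$, while $a(\vcurl \tilde{\phi}_E, \vcurl \psi_I) = 0$ by the very definition \cref{eq:tilde sigma e defintion} of $\tilde{\Sigma}_E$. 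This is precisely the point at which the defining orthogonality conditions of $\tilde{\Sigma}_E$ and $\tilde{\bdd{V}}_E$ interlock to make $\bdd{w} \in W$, completing the decomposition.
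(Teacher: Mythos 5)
Your proof is correct and supplies exactly the argument the paper leaves implicit: the paper states this corollary as a one-line consequence of the exactness of the final row of \cref{eq:full complex}, by analogy with \cref{cor:V I decomposition}, and your $a$-orthogonal splitting of $\tilde{\bdd{V}}_E$ against $\vcurl \tilde{\Sigma}_E$, together with the interlocking orthogonality conditions in the definitions \cref{eq:tilde sigma e defintion,eq:tilde V E definition}, is precisely the intended mechanism. The only cosmetic remark is that in the trivial-intersection step $a(\vcurl\phi,\vcurl\phi)=0$ only forces $\vcurl\phi$ to be constant, so one should also invoke $\vcurl\phi \in \bdd{H}^1_0(\Omega)$ (which holds since $\phi \in H^2_0(\Omega)$) to conclude that it vanishes.
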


\Cref{thm:inf-sup global spaces,thm:inf-sup global interior spaces} show that the mixed finite element pairs appearing in the first two rows of \cref{eq:full complex} are uniformly inf-sup stable. The next result shows that the boundary spaces are also stable with \textit{the same inf-sup constant as for the full velocity and pressure spaces}:
\begin{theorem}
	\label{thm:inf-sup global boundary spaces}
	Let $\beta$ be the discrete inf-sup constant defined in \cref{eq:inf-sup global spaces}. If $q \in \tilde{Q}_E$, then there exists a $\bdd{v} \in \tilde{\bdd{V}}_E$ such that $\dive \bdd{v} = q$ and
	\begin{align}
	\label{eq:right continuous inverse boundary space}
	|\bdd{v}|_{\bdd{H}^1(\Omega)} \leq \beta^{-1} \|q \|_{L^2(\Omega)}.
	\end{align}
	Consequently, the spaces $\tilde{\bdd{V}}_{E} \times \tilde{Q}_E $ are uniformly inf-sup stable:
	\begin{align}
	\label{eq:inf-sup global boundary spaces}
	\inf_{0 \neq q \in \tilde{Q}_E} \sup_{\bdd{0} \neq \bdd{v} \in \tilde{\bdd{V}}_{E}} \frac{b(\bdd{v}, q)}{|\bdd{v}|_{\bdd{H}^1(\Omega)} \|q\|_{L^2(\Omega)} } \geq \beta.
	\end{align}
\end{theorem}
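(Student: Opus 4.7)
The strategy is to start from a divergence-inverting velocity in the full space $\bdd{V}_0$ guaranteed by \cref{thm:inf-sup global spaces}, and then modify it by an element of $\vcurl \Sigma_I$ so that the result lands in $\tilde{\bdd{V}}_E$. Since adding a curl does not change the divergence, this modification preserves the divergence constraint; since curls are $a(\cdot,\cdot)$-orthogonal projections preserve seminorms, this modification can only shrink the $H^1$-seminorm, which is exactly what we need.

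Concretely, let $q \in \tilde{Q}_E \subset Q_0$. By \cref{thm:inf-sup global spaces} applied to $q$, there exists $\bdd{v}^* \in \bdd{V}_0$ with $\dive \bdd{v}^* = q$ and $\|\bdd{v}^*\|_{\bdd{H}^1(\Omega)} \le \beta^{-1} \|q\|_{L^2(\Omega)}$. The subspace $\vcurl \Sigma_I \subset \bdd{V}_I \subset \bdd{V}_0$ is finite-dimensional and $\vcurl : \Sigma_I \to \vcurl \Sigma_I$ is an isomorphism (any $\phi \in \Sigma_I \subset H^2_0(\Omega)$ with $\vcurl \phi = 0$ is constant, hence zero), so there exists a unique $\phi_I \in \Sigma_I$ such that $\vcurl \phi_I$ is the $a(\cdot,\cdot)$-orthogonal projection of $\bdd{v}^*$ onto $\vcurl \Sigma_I$, i.e.
\begin{equation*}
a(\bdd{v}^* - \vcurl \phi_I, \vcurl \psi) = 0 \qquad \forall \psi \in \Sigma_I.
\end{equation*}
Set $\bdd{v} := \bdd{v}^* - \vcurl \phi_I \in \bdd{V}_0$.

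Since $\dive \vcurl \phi_I = 0$, we have $\dive \bdd{v} = \dive \bdd{v}^* = q \in \tilde{Q}_E$, and by construction $a(\bdd{v}, \vcurl \psi) = 0$ for all $\psi \in \Sigma_I$. Comparing with \cref{eq:tilde V E definition}, this shows $\bdd{v} \in \tilde{\bdd{V}}_E$. Finally, Pythagoras in the seminorm $|\cdot|_{\bdd{H}^1(\Omega)}^2 = a(\cdot,\cdot)$ (recall $\nu = 1$) gives
\begin{equation*}
|\bdd{v}|_{\bdd{H}^1(\Omega)}^2 + |\vcurl \phi_I|_{\bdd{H}^1(\Omega)}^2 = |\bdd{v}^*|_{\bdd{H}^1(\Omega)}^2 \le \|\bdd{v}^*\|_{\bdd{H}^1(\Omega)}^2 \le \beta^{-2} \|q\|_{L^2(\Omega)}^2,
\end{equation*}
which yields the desired estimate \cref{eq:right continuous inverse boundary space}. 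The inf-sup bound \cref{eq:inf-sup global boundary spaces} then follows by the standard Fortin-type argument: for any $q \in \tilde{Q}_E$, testing with the $\bdd{v} \in \tilde{\bdd{V}}_E$ just constructed yields $b(\bdd{v},q)/|\bdd{v}|_{\bdd{H}^1(\Omega)} \ge \beta \|q\|_{L^2(\Omega)}$.

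There is no serious obstacle here; the only nontrivial observation is that the two defining conditions of $\tilde{\bdd{V}}_E$ decouple nicely, the divergence condition being automatic from $\dive \bdd{v}^* = q$ and the $a$-orthogonality being exactly what Galerkin projection onto $\vcurl \Sigma_I$ delivers. One could alternatively invoke \cref{thm:tilde interior decomposition} to write $\bdd{v}^* = \bdd{v}_I + \tilde{\bdd{v}}_E$ directly, and then show $\bdd{v}_I \in \vcurl \Sigma_I$ by combining $\dive \bdd{v}_I = 0$ (forced by $\dive \bdd{v}^* \in \tilde{Q}_E$ and uniqueness of the splitting $Q_0 = Q_I \oplus \tilde{Q}_E$) with the exactness of \cref{eq:exact sequence interior global}, but the projection argument above is more direct.
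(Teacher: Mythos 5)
Your proof is correct and follows essentially the same route as the paper: both start from the divergence-inverting $\bdd{v}^*\in\bdd{V}_0$ supplied by \cref{thm:inf-sup global spaces}, correct it by an element of $\vcurl\Sigma_I$ that is $a(\cdot,\cdot)$-orthogonal to the result, and conclude with Pythagoras. Your Galerkin projection onto $\vcurl\Sigma_I$ produces exactly the same corrected velocity as the paper's decomposition $\bdd{v}^*=\bdd{w}_I+\tilde{\bdd{w}}_E$ from \cref{thm:tilde interior decomposition} (there the interior part is shown to be divergence-free and hence a curl via \cref{eq:exact sequence interior global}); your version merely verifies membership in $\tilde{\bdd{V}}_E$ directly from the definition \cref{eq:tilde V E definition}, which is the alternative you yourself note.
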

\begin{proof}
	Let $q \in \tilde{Q}_E$ be given. By \cref{thm:inf-sup global spaces}, there exists a $\bdd{w} \in \bdd{V}_0$ such that $\dive \bdd{w} = q $ and $\|\bdd{w}\|_{\bdd{H}^1(\Omega)} \leq \beta^{-1} \|q\|_{L^2(\Omega)}$, where $\beta$ is independent of $h$ and $k$. According to \cref{thm:tilde interior decomposition,eq:tilde V E definition}, there exists functions $\bdd{w}_I \in \bdd{V}_I$, $\tilde{\bdd{w}}_E \in \tilde{\bdd{V}}_E$ such that $\bdd{w} = \bdd{w}_I + \tilde{\bdd{w}}_E$. $Q_{I} \ni \dive \bdd{w}_I = \dive (\bdd{w} - \tilde{\bdd{w}}_E) \in \tilde{Q}_E$
	since $\dive \bdd{w} = q \in \tilde{Q}_E$, and so $\dive \bdd{w}_I = 0$. By the exact sequence property \cref{eq:exact sequence interior global}, $\bdd{w}_I = \vcurl \phi_I$ for some $\phi \in \Sigma_I$, and thus $\bdd{w} = \vcurl \phi_I + \tilde{\bdd{w}}_E$. Note that this decomposition of $\bdd{w}$ is $a(\cdot, \cdot)$ orthogonal by definition: $a(\vcurl \phi_I, \tilde{\bdd{w}}_E) = 0$ and
	\begin{align*}
	|\bdd{w}|_{H^1(\Omega)}^2 = a(\vcurl \phi_I, \vcurl \phi_I) + a(\tilde{\bdd{w}}_E, \tilde{\bdd{w}}_E) = |\vcurl \phi_I|_{H^1(\Omega)}^2 + |\tilde{\bdd{w}}_E|_{H^1(\Omega)}^2.
	\end{align*}	
	Define $\bdd{v} := \tilde{\bdd{w}}_E$. Then, $
	\dive \bdd{v} = \dive \tilde{\bdd{w}}_E = \dive (\tilde{\bdd{w}}_E + \vcurl \phi_I) = \dive \bdd{w} = q$,
	and $|\bdd{v}|_{H^1(\Omega)} \leq |\bdd{w}|_{H^1(\Omega)} \leq \beta^{-1} \|q\|_{L^2(\Omega)}$.
	\cref{eq:inf-sup global boundary spaces,eq:right continuous inverse boundary space} follow at once.
\end{proof}

\section{Stokes Extension Operator}
\label{sec:stokes extension}

Let $\bdd{V} := V \times V$ denote the discrete velocity space in the absence of essential boundary conditions and $Q_{I}^{\perp}$ be the orthogonal complement of $Q_I$ in $Q$ with the corresponding projection $\tilde{\Pi} : Q \to Q_I^{\perp}$, 
\begin{align}
\label{eq:tilde pi definition}
(\tilde{\Pi} q, r) = (q, r), \ \forall r \in Q_I^{\perp} := \{ q \in Q : (q, r) = 0 \ \forall r \in Q_I \}.
\end{align}
It is worthwhile noting that \cref{eq:tilde pi definition} means that $\tilde{Q}_E = Q_I^{\perp} \cap L^2_0(\Omega)$, so that the space $Q_I^{\perp}$ corresponds to boundary degrees of freedom. Let $K \in \mathcal{T}$. Then, thanks to \cref{thm:inf-sup global interior spaces}, there exist $\bdd{u}_{S,K} \in \bm{\mathcal{P}}_{k}(K)$ and $p_{S,K} \in \mathcal{P}_{k-1}(K)$ satisfying
\begin{subequations}
	\label{eq:stokes extension u and p}
	\begin{align}
	a_K(\bdd{u}_{S,K}, \bdd{v}) + b_K(\bdd{v}, p_{S,K}) &= 0 & &\forall \bdd{v} \in \bdd{V}_I(K) \label{eq:stokes extension u and p 1} \\
	b_K(\bdd{u}_{S,K}, q) &= 0 & &\forall q \in Q_{I}(K) \label{eq:stokes extension u and p 2} \\
	\bdd{u}_{S,K} &= \bdd{u} & &\text{on } \partial K \label{eq:stokes extension u and p 3} \\
	p_{S,K}(\bdd{a}) &= p|_{K}(\bdd{a}) & &\bdd{a} \in \mathcal{V}_K  \label{eq:stokes extension u and p 4} \\
	\int_{K} p_{S,K} \ d\bdd{x} &= \int_{K} p \ d\bdd{x}, \label{eq:stokes extension u and p 5}
	\end{align}
\end{subequations}
where $a_K(\cdot,\cdot)$ and $b_K(\cdot, \cdot)$ denote the restrictions of the bilinear forms to element $K$. We define the Stokes extension map $\bdd{V} \times Q \ni (\bdd{u}, p) \mapsto \mathscr{E}(\bdd{u}, p) =: (\bdd{u}_S, p_S)$ by the rule $\bdd{u}_{S} = \bdd{u}_{S,K}$ and $p_S = p_{S,K}$ on each element $K \in \mathcal{T}$. 

The first result deals with the Stokes extension of a given velocity field paired  a zero pressure:
\begin{theorem}
	\label{thm:extension velocity continuity}
	Let $\bdd{\Pi}_{\bdd{V}} : \bdd{V} \to \bdd{V}$, $\Pi_{Q} : \bdd{V} \to Q$ be defined by the rule
	\begin{align}
	\label{eq:pi dagger definition}
	\bdd{V} \ni \bdd{u} \mapsto (\bdd{\Pi}_{\bdd{V}} \bdd{u}, \Pi_{Q} \bdd{u}) := \mathscr{E}(\bdd{u}, 0). 
	\end{align}
	Then, $\Pi_{Q} \bdd{u} \in Q_I$ and 
	\begin{align}
	\label{eq:stokes extension continuity u dagger}
	\|\bdd{\Pi}_{\bdd{V}} \bdd{u}\|_{\bdd{H}^1(K)} +  \|\Pi_{Q} \bdd{u} \|_{L^2(K)} \leq C  \|\bdd{u}\|_{\bdd{H}^{1/2}(\partial K)}, \quad \forall K \in \mathcal{T},
	\end{align}
	where $\|\cdot\|_{\bdd{H}^{1/2}(\partial K)}$ is the usual trace norm and $C$ is independent of $k$ and $\bdd{u}$. In particular, if $\bdd{u} \in \tilde{\bdd{V}}_E$, then $\bdd{\Pi}_{\bdd{V}} \bdd{u} = \bdd{u}$. Moreover, the following equivalence of semi-norms holds:
	\begin{align}
	\label{eq:stokes ext equiv harmonic ext}
	|\bdd{u}|_{\bdd{H}^{1/2}(\partial K)} \leq |\bdd{\Pi}_{\bdd{V}} \bdd{u}|_{\bdd{H}^{1}(K)} \leq C \beta^{-1} |\bdd{u}|_{\bdd{H}^{1/2}(\partial K)}, \quad \forall K \in \mathcal{T},
	\end{align}
	where $C$ is independent of $k$, $h_K$, $\beta$, and $\bdd{u}$. 		
\end{theorem}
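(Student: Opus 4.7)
The plan is to work element by element and use \cref{thm:inf-sup global interior spaces} as the main tool, since it supplies both well-posedness of the local Stokes extension problem \eqref{eq:stokes extension u and p} and the quantitative inverse divergence bound needed later. The first assertion, $\Pi_Q\bdd{u}\in Q_I$, is immediate from \eqref{eq:stokes extension u and p 4} and \eqref{eq:stokes extension u and p 5} with $p=0$, which force $p_{S,K}$ to vanish at every vertex of $K$ and to have zero mean, i.e., $p_{S,K}\in Q_I(K)$ elementwise. I would then prove the invariance $\bdd{\Pi}_{\bdd{V}}\bdd{u}=\bdd{u}$ for $\bdd{u}\in\tilde{\bdd{V}}_E$ before attacking the norm estimates, since it is the structural heart of the theorem and uses the definition \eqref{eq:tilde V E definition} most transparently. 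By uniqueness of the local Stokes extension, it suffices to exhibit, on each $K$, an $r\in Q_I(K)$ such that $(\bdd{u}|_K,r)$ satisfies \eqref{eq:stokes extension u and p}. Conditions \eqref{eq:stokes extension u and p 3}, \eqref{eq:stokes extension u and p 4}, and \eqref{eq:stokes extension u and p 5} then hold for any $r\in Q_I(K)$; condition \eqref{eq:stokes extension u and p 2} reduces to $b_K(\bdd{u},q)=0$ for all $q\in Q_I(K)$, which follows from $\dive\bdd{u}\in\tilde{Q}_E$ after extending $q$ by zero into $Q_I$. The remaining condition \eqref{eq:stokes extension u and p 1} requires solving $b_K(\bdd{v},r)=-a_K(\bdd{u},\bdd{v})$ for $r\in Q_I(K)$, which by the local inf-sup of \cref{thm:inf-sup global interior spaces} is solvable precisely when the right-hand side annihilates $\ker\dive|_{\bdd{V}_I(K)}=\vcurl\Sigma_I(K)$ (from \eqref{eq:exact sequence interior element}). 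The orthogonality in \eqref{eq:tilde V E definition}, applied to zero extensions of $\Sigma_I(K)$-functions into $\Sigma_I$, supplies exactly this compatibility.

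For the continuity estimate \eqref{eq:stokes extension continuity u dagger}, my plan is first to construct a polynomial lifting $\bdd{U}\in\bm{\mathcal{P}}_k(K)$ of the trace $\bdd{u}|_{\partial K}$ satisfying $\|\bdd{U}\|_{\bdd{H}^1(K)}\leq C\|\bdd{u}\|_{\bdd{H}^{1/2}(\partial K)}$ with $C$ independent of $k$, by invoking a standard $hp$ polynomial trace-lifting theorem on the reference triangle together with a scaling argument. Writing $\bdd{u}_{S,K}=\bdd{U}+\bdd{w}$ with $\bdd{w}\in\bdd{V}_I(K)$ turns \eqref{eq:stokes extension u and p} into a classical saddle point problem for $(\bdd{w},p_{S,K})\in\bdd{V}_I(K)\times Q_I(K)$ whose data is controlled by $\|\bdd{U}\|_{\bdd{H}^1(K)}$; Brezzi's theorem, using the inf-sup of \cref{thm:inf-sup global interior spaces} and the coercivity of $a_K$ on $\bdd{V}_I(K)\subset\bdd{H}^1_0(K)$, then yields \eqref{eq:stokes extension continuity u dagger}. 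The seminorm equivalence \eqref{eq:stokes ext equiv harmonic ext} is subsequently a matter of bookkeeping: the lower bound is the trace characterization of $|\bdd{u}|_{\bdd{H}^{1/2}(\partial K)}$ as the infimum of $\bdd{H}^1$ seminorms over all extensions, combined with $\bdd{\Pi}_{\bdd{V}}\bdd{u}=\bdd{u}$ on $\partial K$; the upper bound repeats the previous argument using a seminorm-only lifting and exposes the factor $\beta^{-1}$ inherited from the inverse divergence estimate \eqref{eq:invert div element}.

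The main obstacle, as usual in $hp$ analysis, is securing a $k$- and $h_K$-robust polynomial extension of the boundary trace: Brezzi's theorem alone cannot transfer the $\bdd{H}^{1/2}$ data norm on $\partial K$ to the $\bdd{H}^1$ norm on $K$ without losing $k$. I expect to invoke sharp polynomial extension results already available on the reference triangle rather than build new machinery; the $\beta^{-1}$ appearing in \eqref{eq:stokes ext equiv harmonic ext} is not a fundamental difficulty but arises naturally from the interior inf-sup constant via \eqref{eq:invert div element}.
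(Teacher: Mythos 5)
Your proposal is correct and, for the membership $\Pi_Q\bdd{u}\in Q_I$, the continuity bound \cref{eq:stokes extension continuity u dagger}, and the seminorm equivalence \cref{eq:stokes ext equiv harmonic ext}, it follows essentially the same route as the paper: a $k$-robust polynomial trace lifting on the reference element (the paper cites Babu\v{s}ka--Craig--Mandel--Pitk\"aranta for exactly this), followed by Brezzi's theorem for the interior correction using the local inf-sup of \cref{thm:inf-sup global interior spaces}, and the Bramble--Pasciak-style argument for \cref{eq:stokes ext equiv harmonic ext}, where you correctly locate the source of the $\beta^{-1}$ in \cref{eq:invert div element}. The one place where you take a genuinely different path is the invariance $\bdd{\Pi}_{\bdd{V}}\bdd{u}=\bdd{u}$ for $\bdd{u}\in\tilde{\bdd{V}}_E$. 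You argue elementwise and ``forwards'': you exhibit a Lagrange multiplier $r\in Q_I(K)$ making $(\bdd{u}|_K,r)$ a solution of \cref{eq:stokes extension u and p}, using the closed-range/compatibility condition that $\bdd{v}\mapsto a_K(\bdd{u},\bdd{v})$ annihilates $\ker\dive|_{\bdd{V}_I(K)}=\vcurl\Sigma_I(K)$ (supplied by \cref{eq:tilde V E definition} via zero extension, together with \cref{eq:exact sequence interior element}), and then invoke uniqueness of the local extension. The paper instead argues ``backwards'' and globally: it sets $\bdd{w}=\bdd{u}-\bdd{\Pi}_{\bdd{V}}\bdd{u}\in\bdd{V}_I$, shows $\dive\bdd{w}\in Q_I\cap\tilde{Q}_E=\{0\}$ so that $\bdd{w}=\vcurl\phi$ with $\phi\in\Sigma_I$, and kills $\phi$ by testing \cref{eq:stokes extension u and p 1} with $\bdd{v}=\vcurl\psi$ and using coercivity of $a(\vcurl\cdot,\vcurl\cdot)$ on $\Sigma_I$. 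The two arguments use the same ingredients; the paper's version avoids the explicit solvability discussion for the multiplier $r$ (it never needs to produce $r$ at all), while yours makes the mechanism by which the orthogonality in \cref{eq:tilde V E definition} enters more transparent and is purely local. Both are complete, so there is no gap to report.
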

\begin{proof}
	Let $K \in \mathcal{T}$ and $\bdd{u} \in \bdd{V}$ be given. Conditions \cref{eq:stokes extension u and p 4,eq:stokes extension u and p 5} imply that $\Pi_{Q} \bdd{u} \in Q_{I}$. Thanks to \cite[Theorem 7.4]{BCMP91}, there exists $\bdd{w} \in \bm{\mathcal{P}}_{k}(K)$ such that
	\begin{align}
	\label{eq:stokes extension proof w}
	\bdd{w}|_{\partial K} = \bdd{u}|_{\partial K} \quad \text{and} \quad \|\bdd{w}\|_{\bdd{H}^{1}(K)} \leq C \|\bdd{u}\|_{\bdd{H}^{1/2}(\partial K)},
	\end{align}
	with $C$ independent of $k$.
	In particular, $\bdd{\Pi}_{\bdd{V}} \bdd{u} - \bdd{w} = \bdd{u}_I$ where $\bdd{u}_I \in \bdd{V}_I(K)$ satisfies
	\begin{align*}
	\begin{aligned}
	a_K(\bdd{u}_{I}, \bdd{v}) + b_K(\bdd{v}, \Pi_{Q} \bdd{u}) &= -a_K(\bdd{w}, \bdd{v}) & &\forall \bdd{v} \in \bdd{V}_I(K) \\ 
	b_K(\bdd{u}_{I}, q) &= -b_K(\bdd{w}, q) & &\forall q \in Q_{I}(K). 
	\end{aligned}
	\end{align*}
	Using \cite[Corollary 4.1]{Gir12} and \cref{thm:inf-sup global interior spaces}, we conclude that
	\begin{align*}
	\|\bdd{u}_I\|_{\bdd{H}^1(K)} + \|\Pi_{Q} \bdd{u} \|_{L^2(K)} \leq C \|\bdd{w}\|_{\bdd{H}^1(K)}
	\end{align*}
	\Cref{eq:stokes extension continuity u dagger} now follows from the triangle inequality and \cref{eq:stokes extension proof w}.
	
	Now let $\bdd{u} \in \tilde{\bdd{V}}_E$. Then, $\dive \bdd{\Pi}_{\bdd{V}} \bdd{u} \in \tilde{Q}_E$ by \cref{eq:stokes extension u and p 2} and $\bdd{w} := \bdd{u} -\bdd{\Pi}_{\bdd{V}} \bdd{u} \in \bdd{V}_I$ by \cref{eq:stokes extension u and p 3}. Moreover, $\dive \bdd{w} \in Q_I \cap \tilde{Q}_E$ since $\dive \bdd{u}, \dive \bdd{\Pi}_{\bdd{V}} \bdd{u} \in \tilde{Q}_E$. Thus, $\dive \bdd{w} = 0$ and $\bdd{w} = \vcurl \phi$ with $\phi \in \Sigma_I$ by the exact sequence property \cref{eq:exact sequence interior global}. For any $\psi \in \Sigma_I$,
	\begin{align*}
	a(\vcurl \phi, \vcurl \psi) = a(\bdd{u}, \vcurl \psi) - a(\bdd{\Pi}_{\bdd{V}}\bdd{u}, \vcurl \psi) = 0
	\end{align*}
	by the definition of $\tilde{\bdd{V}}_E$ and choosing $\bdd{v} = \vcurl \psi$ for $\psi \in \Sigma_I$ in \cref{eq:stokes extension u and p 1}. Since $a(\vcurl \cdot, \vcurl \cdot)$ is coercive on $\Sigma_I$, $\phi \equiv 0$, and $\bdd{u} = \bdd{\Pi}_{\bdd{V}} \bdd{u}$. 
	
	The equivalence \cref{eq:stokes ext equiv harmonic ext} is proved by arguing as in \cite[Theorem 4.1]{Bram90}. 
\end{proof}
The next result complements \cref{thm:extension velocity continuity}:
\begin{theorem}
	\label{thm:extension pressure continuity}
	For $p \in Q$, $\mathscr{E}(\bdd{0}, p) = (\bdd{0}, \tilde{\Pi} p)$ where $\tilde{\Pi}$ is defined in \cref{eq:tilde pi definition}, and
	\begin{align}
	\label{eq:stokes extension continuity tilde p}
	\|\tilde{\Pi} p \|_{L^2(K)} \leq \|p\|_{L^2(K)}.
	\end{align}	
	In particular, if $p \in Q_I^{\perp}$, then $\mathscr{E}(\bdd{0}, p) = (\bdd{0}, p)$.
\end{theorem}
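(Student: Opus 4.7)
The plan is to show directly that the pair $(\bdd{0}, \tilde{\Pi} p)$ satisfies all five defining conditions of the Stokes extension on each element $K$, and then appeal to uniqueness (guaranteed by \cref{thm:inf-sup global interior spaces}) to identify this pair with $\mathscr{E}(\bdd{0}, p)$. The essential observation is that $\tilde{\Pi}$ is defined element-by-element: because $Q_I = \bigoplus_{K} Q_I(K)$ with each summand supported in a single element, the global orthogonal projection $\tilde{\Pi} : Q \to Q_I^{\perp}$ acts on each $K$ as the $L^2(K)$-orthogonal projection onto $Q_I(K)^{\perp}$ inside $\mathcal{P}_{k-1}(K)$. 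In particular, $p - \tilde{\Pi} p \in Q_I$, which forces $p - \tilde{\Pi} p$ to vanish at every element vertex and to have zero average on every element.

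First I would verify the boundary and normalization conditions. Equation \cref{eq:stokes extension u and p 3} holds trivially since $\bdd{u} = \bdd{0}$. From $p - \tilde{\Pi} p \in Q_I$ we read off that $(\tilde{\Pi} p)(\bdd{a}) = p(\bdd{a})$ at every $\bdd{a} \in \mathcal{V}_K$ (giving \cref{eq:stokes extension u and p 4}) and that $\int_K \tilde{\Pi} p \, d\bdd{x} = \int_K p \, d\bdd{x}$ (giving \cref{eq:stokes extension u and p 5}). Condition \cref{eq:stokes extension u and p 2} is immediate with $\bdd{u}_S = \bdd{0}$. For \cref{eq:stokes extension u and p 1}, I need $b_K(\bdd{v}, \tilde{\Pi} p) = -(\dive \bdd{v}, \tilde{\Pi} p)_K = 0$ for every $\bdd{v} \in \bdd{V}_I(K)$; this follows because $\dive \bdd{V}_I(K) = Q_I(K)$ by the exact sequence \cref{eq:exact sequence interior element}, while $\tilde{\Pi} p \perp Q_I(K)$ by construction. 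By the unique solvability asserted in \cref{thm:inf-sup global interior spaces}, we conclude $\mathscr{E}(\bdd{0}, p) = (\bdd{0}, \tilde{\Pi} p)$.

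The bound \cref{eq:stokes extension continuity tilde p} then follows because $\tilde{\Pi}$ acts as an $L^2(K)$-orthogonal projection on each $K$, and any orthogonal projection has operator norm at most one, so $\|\tilde{\Pi} p\|_{L^2(K)} \le \|p\|_{L^2(K)}$. Finally, the closing claim is a trivial consequence: if $p \in Q_I^{\perp}$, then $\tilde{\Pi} p = p$ by definition of $\tilde{\Pi}$, whence $\mathscr{E}(\bdd{0}, p) = (\bdd{0}, p)$.

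There is no real obstacle in this proof; the only point worth stating carefully is the localization of $\tilde{\Pi}$, which underlies both the vertex-value/average identification and the elementwise norm bound. Once that is in hand, the five Stokes extension conditions verify by inspection.
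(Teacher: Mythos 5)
Your proof is correct and rests on the same two ingredients as the paper's: the orthogonality $b_K(\bdd{v},\tilde{\Pi}p)=0$ for $\bdd{v}\in\bdd{V}_I(K)$ (via $\dive\bdd{V}_I(K)=Q_I(K)$) and the unique solvability of the system \cref{eq:stokes extension u and p} guaranteed by \cref{thm:inf-sup global interior spaces}. The only difference is direction — you verify that the candidate $(\bdd{0},\tilde{\Pi}p)$ satisfies the defining system, whereas the paper decomposes the computed extension and identifies its $Q_I$-component with $\mathscr{E}(\bdd{0},0)=(\bdd{0},0)$ — and your explicit localization of $\tilde{\Pi}$ to each element is a point the paper leaves implicit when asserting the elementwise bound.
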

\begin{proof}
	Let $p \in Q$ and consider the Stokes extension $(\tilde{\bdd{u}}, \tilde{p}) := \mathscr{E}(\bdd{0}, p)$. Since $Q = Q_I \oplus Q_{I}^{\perp}$, the pressure $\tilde{p}$ may be written in the form $\tilde{p} = p_{I} + \tilde{\Pi} p$. In particular, $p_{I} \in Q_{I}$ satisfies
	\begin{align*}
	b_K(\bdd{v}, p_{I}) = b_K(\bdd{v}, \tilde{p}) - b_K(\bdd{v}, \tilde{\Pi} p) = b_K(\bdd{v}_K, \tilde{p}), \quad \forall \bdd{v} \in \bdd{V}_I(K), \ \forall K \in \mathcal{T},
	\end{align*}
	where we used the fact that $b_K(\bdd{v}, \tilde{\Pi} p) = 0$ since $\dive \bdd{V}_I(K) = Q_{I}(K) \perp Q_{I}^{\perp}$.
	Hence,
	\begin{align*}
	\begin{aligned}
	a_K(\tilde{\bdd{u}}, \bdd{v}) + b_K(\bdd{v}, p_{I}) &= 0& &\forall \bdd{v} \in \bdd{V}_I(K) \\ 
	b_K(\tilde{\bdd{u}}, q) &= 0 & &\forall q \in Q_{I}(K), 		 
	\end{aligned}
	\end{align*}
	\Cref{eq:stokes extension continuity u dagger} then gives $(\tilde{\bdd{u}}, p_{I}) = \mathscr{E}(\bdd{0}, 0)$; or, equally well, $\tilde{\bdd{u}} = \bdd{0}$ and $\tilde{p} = \tilde{\Pi} p$. The estimate \cref{eq:stokes extension continuity us ps} immediately follows since $\tilde{\Pi}$ is a projection. If $p \in Q_{I}^{\perp}$, then $\mathscr{E}(\bdd{0}, p) = (\bdd{0}, \tilde{\Pi} p)= (\bdd{0}, p)$.
\end{proof}
Combining \cref{thm:extension velocity continuity,thm:extension pressure continuity} leads to the following result:
\begin{corollary}
	\label{cor:extension continuity element}
	The Stokes extension operator $\mathscr{E}(\cdot, \cdot)$ is linear and continuous: For $K \in \mathcal{T}$, 
	\begin{align}
	\label{eq:stokes extension continuity us ps}
	\|\mathscr{E}(\bdd{u}, p)\|_{\bdd{H}^1(K) \times L^2(K)} \leq C  \|\bdd{u}\|_{\bdd{H}^{1/2}(\partial K)} +  \|\tilde{\Pi} p \|_{L^2(K)} \quad \forall (\bdd{u}, p) \in \bdd{V} \times Q,
	\end{align}
	where $C$ is independent of $k$. Moreover, $\ker \mathscr{E} = \bdd{V}_I \times Q_I$ and $\mathscr{E}(\bdd{u}, p) = (\bdd{\Pi}_{\bdd{V}} \bdd{u}, \Pi_{Q} \bdd{u} + \tilde{\Pi} p)$.
\end{corollary}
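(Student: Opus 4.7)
The plan is to exploit the linearity of the local Stokes systems \cref{eq:stokes extension u and p} in their data. Since the right-hand sides, boundary data, and vertex/mean constraints all enter linearly, the map $\mathscr{E}$ is itself linear. Hence I can split
\[
\mathscr{E}(\bdd{u}, p) = \mathscr{E}(\bdd{u}, \bdd{0}) + \mathscr{E}(\bdd{0}, p)
\]
and apply the two preceding theorems separately. By \cref{thm:extension velocity continuity}, the first term equals $(\bdd{\Pi}_{\bdd{V}} \bdd{u}, \Pi_{Q} \bdd{u})$ with $\Pi_{Q} \bdd{u} \in Q_I$, and by \cref{thm:extension pressure continuity}, the second term equals $(\bdd{0}, \tilde{\Pi} p)$. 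Adding yields the advertised formula $\mathscr{E}(\bdd{u}, p) = (\bdd{\Pi}_{\bdd{V}} \bdd{u}, \Pi_{Q} \bdd{u} + \tilde{\Pi} p)$.

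The continuity bound \cref{eq:stokes extension continuity us ps} then drops out of the triangle inequality applied to this decomposition, using \cref{eq:stokes extension continuity u dagger} on the first summand and \cref{eq:stokes extension continuity tilde p} on the second. The constant $C$ inherits its $k$-independence from \cref{thm:extension velocity continuity}, and the pressure term from \cref{thm:extension pressure continuity} contributes with constant $1$.

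For the kernel characterization, the $\supseteq$ direction is immediate: if $\bdd{u} \in \bdd{V}_I$, then $\bdd{u}|_{\partial K} = \bdd{0}$ for every $K$, so by the uniqueness of solutions to \cref{eq:stokes extension u and p} (\cref{thm:inf-sup global interior spaces}), both components of $\mathscr{E}(\bdd{u}, \bdd{0})$ vanish; and if $p \in Q_I$, then $\tilde{\Pi} p = 0$ by the definition of $\tilde{\Pi}$. For the $\subseteq$ direction, suppose $\mathscr{E}(\bdd{u}, p) = (\bdd{0}, 0)$. Condition \cref{eq:stokes extension u and p 3} gives $\bdd{\Pi}_{\bdd{V}} \bdd{u}|_{\partial K} = \bdd{u}|_{\partial K}$, so $\bdd{\Pi}_{\bdd{V}} \bdd{u} = \bdd{0}$ forces $\bdd{u}$ to vanish on every element boundary, whence $\bdd{u} \in \bdd{V}_I$. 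Consequently $\mathscr{E}(\bdd{u}, \bdd{0}) = (\bdd{0}, 0)$, and the remaining identity $\Pi_Q \bdd{u} + \tilde{\Pi} p = 0$ collapses to $\tilde{\Pi} p = 0$, i.e.\ $p \in Q_I$ in view of the decomposition $Q = Q_I \oplus Q_I^{\perp}$.

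No single step presents a serious obstacle; the whole argument is essentially a bookkeeping exercise that assembles the two previous theorems. The only point requiring minor care is the uniqueness argument used in the kernel direction, which must invoke \cref{thm:inf-sup global interior spaces} to guarantee that the local Stokes problem has a trivial solution when the boundary data, the right-hand side, and the vertex/mean pressure data all vanish.
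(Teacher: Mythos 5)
Your argument is correct and follows essentially the same route as the paper's proof: linearity from the definition, the formula and continuity bound by splitting $\mathscr{E}(\bdd{u},p)=\mathscr{E}(\bdd{u},\bdd{0})+\mathscr{E}(\bdd{0},p)$ and applying \cref{thm:extension velocity continuity,thm:extension pressure continuity} with the triangle inequality, and the kernel characterization from the constraint equations. The only (immaterial) difference is in the inclusion $\bdd{V}_I\times Q_I\subseteq\ker\mathscr{E}$, which you obtain from uniqueness of the local Stokes problems via \cref{thm:inf-sup global interior spaces}, whereas the paper reads it off directly from the continuity bound \cref{eq:stokes extension continuity us ps}, whose right-hand side vanishes for such data.
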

\begin{proof}
	The linearity of $\mathscr{E}(\cdot, \cdot)$ is immediate from the definition \cref{eq:stokes extension u and p}, and \cref{eq:stokes extension continuity us ps} then follows from \cref{eq:stokes extension continuity u dagger,eq:stokes extension continuity tilde p} using the triangle inequality. A simple consequence of \cref{eq:stokes extension u and p 3,eq:stokes extension u and p 4,eq:stokes extension u and p 5} is that $\ker \mathscr{E} \subseteq \bdd{V}_I \times Q_I$. Moreover, \cref{eq:stokes extension continuity us ps} gives that $\bdd{V}_I \times Q_I \subseteq \ker \mathscr{E}$. Thus, $\ker \mathscr{E} = \bdd{V}_I \times Q_{I}$.
\end{proof}

\section{Variational Form of the Schur Complement System}
\label{sec:schur complement variational}

The results of the previous two sections are used to study the Schur complement system \cref{eq:Schur complement system}. The first result relates the Schur complement matrix $\bdd{S}$ to the discrete Stokes extension map:
\begin{lemma}
	\label{lem:matrix relations}
	For all $(\bdd{u}, p), (\bdd{v}, q) \in \bdd{V}_0 \times Q$, the Stokes extension satisfies
	\begin{align}
	\label{eq:inner product matrix}
	a(\bdd{u}_{S}, \bdd{v}_{S}) + b(\bdd{v}_{S}, p_{S}) + b(\bdd{u}_{S}, q_{S}) = \begin{bmatrix}
	\vec{v}_E \\ \vec{q}_e \end{bmatrix}^T
	\bdd{S}
	\begin{bmatrix}
	\vec{u}_E \\ \vec{p}_{e} \end{bmatrix}
	\end{align}
	where $\bdd{S} = \begin{bmatrix}
	\widetilde{\bdd{A}} & \widetilde{\bdd{B}}^T \\
	\widetilde{\bdd{B}} & \widetilde{\bdd{C}} \end{bmatrix}$. Consequently, the following identities hold:
	\begin{subequations}
		\label{eq:matrix relations}
		\begin{align}
		\vec{u}_E^{T} \widetilde{\bdd{A}} \vec{v}_E &= a(\bdd{\Pi}_{\bdd{V}} \bdd{u}, \bdd{\Pi}_{\bdd{V}} \bdd{v}) & & \forall \bdd{u}, \bdd{v} \in \bdd{V}_0, \label{eq:matrix relations 1} \\
		\vec{p}_{e}^{T} \widetilde{\bdd{B}} \vec{u}_E  &= b(\bdd{\Pi}_{\bdd{V}} \bdd{u}, \tilde{\Pi} p) & & \forall p \in Q, \ \bdd{u} \in \bdd{V}_0, \label{eq:matrix relations 2} \\
		\widetilde{\bdd{C}} &= \bdd{0} \quad \text{and} \quad \vec{g}_{e}^{*} = \vec{0}, \label{eq:matrix relations 3}
		\end{align}	
	\end{subequations}
	where $\bdd{\Pi}_{\bdd{V}}$ is defined in \cref{eq:pi dagger definition} and $\tilde{\Pi}$ is defined in \cref{eq:tilde pi definition}.
\end{lemma}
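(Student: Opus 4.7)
The plan is to establish the master identity \cref{eq:inner product matrix} first and then derive \cref{eq:matrix relations 1,eq:matrix relations 2} and the vanishing of $\widetilde{\bdd{C}}$ by three successive specializations of the test and trial pairs; the assertion $\vec{g}_e^* = \vec{0}$ is handled separately using the interior exact sequence \cref{eq:exact sequence interior global}.

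To prove \cref{eq:inner product matrix}, I split the Stokes-extended test pair $(\bdd{v}_S, q_S) = \mathscr{E}(\bdd{v}, q)$ into its exterior and interior contributions $\bdd{v}_S = \bdd{v}_{\mathrm{ext}} + \bdd{v}_{\mathrm{int}}$ and $q_S = q_{\mathrm{ext}} + q_{\mathrm{int}}$, where $\bdd{v}_{\mathrm{ext}}$ and $q_{\mathrm{ext}}$ are the finite element functions with exterior coefficient vectors $\vec{v}_E$, $\vec{q}_e$ and vanishing interior coefficients, while $\bdd{v}_{\mathrm{int}} \in \bdd{V}_I$ and $q_{\mathrm{int}} \in Q_I$. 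Because $(\bdd{u}_S, p_S)$ solves the elementwise Stokes problem \cref{eq:stokes extension u and p 1,eq:stokes extension u and p 2} against every pair in $\bdd{V}_I(K) \times Q_{I}(K)$, the contributions $a(\bdd{u}_S, \bdd{v}_{\mathrm{int}}) + b(\bdd{v}_{\mathrm{int}}, p_S)$ and $b(\bdd{u}_S, q_{\mathrm{int}})$ both vanish, and the left-hand side of \cref{eq:inner product matrix} collapses to $a(\bdd{u}_S, \bdd{v}_{\mathrm{ext}}) + b(\bdd{v}_{\mathrm{ext}}, p_S) + b(\bdd{u}_S, q_{\mathrm{ext}})$. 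Writing this in terms of the blocks of \cref{eq:full system matrix} and eliminating the interior coefficients of $(\bdd{u}_S, p_S)$ via the matrix form of the defining Stokes-extension equations then reproduces the right-hand side of \cref{eq:inner product matrix} by the definition \cref{eq:schur complement matrix} of $\bdd{S}$. This is purely algebraic but is the bookkeeping-heavy part of the argument and the step I expect to require the most care.

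With \cref{eq:inner product matrix} in hand, the identities \cref{eq:matrix relations 1,eq:matrix relations 2} and $\widetilde{\bdd{C}} = \bdd{0}$ follow by specialization. Taking $p = q = 0$ and invoking \cref{thm:extension velocity continuity} gives $\mathscr{E}(\bdd{u}, 0) = (\bdd{\Pi}_{\bdd{V}} \bdd{u}, \Pi_{Q} \bdd{u})$ with $\Pi_{Q} \bdd{u}, \Pi_{Q} \bdd{v} \in Q_I$; the cross terms $b(\bdd{\Pi}_{\bdd{V}} \bdd{v}, \Pi_{Q} \bdd{u})$ and $b(\bdd{\Pi}_{\bdd{V}} \bdd{u}, \Pi_{Q} \bdd{v})$ then vanish elementwise by \cref{eq:stokes extension u and p 2}, leaving precisely \cref{eq:matrix relations 1}. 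Setting $\bdd{v} = \bdd{0}$ and $p = 0$ and applying \cref{thm:extension pressure continuity} gives $\mathscr{E}(\bdd{0}, q) = (\bdd{0}, \tilde{\Pi} q)$, so the only surviving term on the left is $b(\bdd{\Pi}_{\bdd{V}} \bdd{u}, \tilde{\Pi} q)$ and, after relabelling $q$ as $p$, one obtains \cref{eq:matrix relations 2}. Finally, $\bdd{u} = \bdd{v} = \bdd{0}$ reduces the left-hand side to zero identically and forces $\widetilde{\bdd{C}} = \bdd{0}$.

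For $\vec{g}_e^* = \vec{0}$ a separate argument is required. From \cref{eq:Schur complement system}, $\vec{g}_e^*$ equals the negative of the product of the row $[\bdd{B}_{eI}, \bdd{0}]$ with the interior-block inverse applied to the load vector $(\vec{f}_I, \vec{0})$. Letting $(\vec{w}, \vec{r})$ denote the result of that inverse action, the second block of the resulting interior system reads $\bdd{B}_{\iota I} \vec{w} = \vec{0}$, so the associated finite element field $\bdd{w}_h \in \bdd{V}_I$ satisfies $(\dive \bdd{w}_h, q) = 0$ for every $q \in Q_I$. But the interior exact sequence \cref{eq:exact sequence interior global} gives $\dive \bdd{V}_I = Q_I$, so $\dive \bdd{w}_h \in Q_I$ as well; combined with the orthogonality this forces $\dive \bdd{w}_h \equiv 0$. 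Consequently $b(\bdd{w}_h, \psi) = 0$ for every exterior pressure basis function $\psi$, so $\bdd{B}_{eI} \vec{w} = \vec{0}$ and $\vec{g}_e^* = \vec{0}$, completing the proof.
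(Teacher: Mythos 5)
Your proof is correct, and for the master identity \cref{eq:inner product matrix} and the three specializations yielding \cref{eq:matrix relations 1,eq:matrix relations 2} and $\widetilde{\bdd{C}} = \bdd{0}$ you follow essentially the same route as the paper: the paper expands the full four-block quadratic form in the coefficients of the two Stokes extensions and collapses it to $\bdd{S}$ using the matrix form of the interior equations, which is the same bookkeeping you perform after first killing the interior test contributions via \cref{eq:stokes extension u and p 1,eq:stokes extension u and p 2}. Where you genuinely diverge is the claim $\vec{g}_{e}^{*} = \vec{0}$. The paper obtains it indirectly: it takes $\bdd{u}$ to be the discrete Stokes solution, uses the second block of the condensed system together with \cref{eq:matrix relations 2} to write $\vec{q}_e^{T}\vec{g}_e^{*} = b(\bdd{\Pi}_{\bdd{V}}\bdd{u},\tilde{\Pi}q) = -b(\bdd{u}-\bdd{\Pi}_{\bdd{V}}\bdd{u},\tilde{\Pi}q)$ via \cref{eq:discrete stokes system 2}, and then invokes $\dive\bdd{V}_I = Q_I \perp Q_I^{\perp}$. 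Your argument instead works directly on the definition of $\vec{g}_{e}^{*}$ in \cref{eq:Schur complement system}: the zero second block of the interior right-hand side forces the interior velocity $\bdd{w}_h$ produced by the inverse to satisfy $(\dive\bdd{w}_h, q)=0$ for all $q\in Q_I$, and since $\dive\bdd{w}_h\in\dive\bdd{V}_I=Q_I$ by \cref{eq:exact sequence interior global}, it follows that $\dive\bdd{w}_h\equiv 0$ and all pressure pairings with $\bdd{w}_h$ vanish. This is arguably cleaner: it makes no reference to the discrete solution of \cref{eq:discrete stokes system} (which is slightly incongruous in the paper's item (c), where $\bdd{u}=\bdd{v}=\bdd{0}$ has just been chosen), and it shows $\vec{g}_{e}^{*}=\vec{0}$ for an arbitrary interior load $\vec{f}_I$, which is the more natural statement given that the lemma is posed for arbitrary $(\bdd{u},p)$.
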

\begin{proof}
	Let $(\bdd{u}, p) \in \bdd{V}_0 \times Q$. The Stokes extension $(\bdd{u}_S, p_S) = \mathcal{E}(\bdd{u}, p)$ may be written as $\bdd{u}_{S} = \vec{\Phi}_{E}^T \vec{u}_E + \vec{\Phi}_{I}^{T} \vec{u}_{I}^{*}$ and $ p_{S} = \vec{\psi}_{e}^{T} \vec{p}_e + \vec{\psi}_{\iota}^{T} \vec{p}_{\iota}^{*}$ so that $\bdd{u} = \vec{\Phi}_E^T \vec{u}_E + \vec{\Phi}_I^T \vec{u}_I$ and $p = \vec{\psi}_e^T \vec{p}_e + \vec{\psi}_{\iota}^T \vec{p}_{\iota}$ for suitable $\vec{u}_E$, $\vec{u}_I$, $\vec{p}_e$, and $\vec{p}_{\iota}$. Thanks to \cref{eq:stokes extension u and p 1,eq:stokes extension u and p 2}, $\vec{u}_I^{*}$ and $\vec{p}_{\iota}^{*}$ are given by
	\begin{align}
	\label{eq:dagger interior relations}
	\begin{bmatrix}
	\vec{u}_I^{*} \\ \vec{p}_{\iota}^{*}
	\end{bmatrix} = - \begin{bmatrix}
	\bdd{A}_{II} & \bdd{B}_{I \iota} \\
	\bdd{B}_{\iota I} & \bdd{0}
	\end{bmatrix}^{-1} \begin{bmatrix}
	\bdd{A}_{IE} & \bdd{B}_{Ie} \\
	\bdd{B}_{\iota E} & \bdd{0}
	\end{bmatrix}
	\begin{bmatrix}
	\vec{u}_E \\ \vec{p}_{e}
	\end{bmatrix}.
	\end{align}
	Analogous relations hold replacing $(\bdd{u}, p)$ by $(\bdd{v}, q) \in \bdd{V}_0 \times Q$. Now,
	\begin{align*}
	&a(\bdd{u}_{S}, \bdd{v}_{S}) + b(\bdd{v}_{S}, p_{S}) + b(\bdd{u}_{S}, q_{S}) \\
	&\qquad = \begin{bmatrix}
	\begin{array}{c}
	\vec{v}_E \\ \vec{q}_e \\ \hline \vec{v}_I^{*} \\ \vec{q}_{\iota}^{*}
	\end{array}
	\end{bmatrix}^{T} 
	\begin{bmatrix}
	\begin{array}{cc|cc}
	\bdd{A}_{EE} & \bdd{B}_{E e} & \bdd{A}_{EI} & \bdd{B}_{E\iota} \\
	\bdd{B}_{e E} & \bdd{0} &  \bdd{B}_{e I} & \bdd{0}\\
	\hline 
	\bdd{A}_{IE} & \bdd{B}_{I e} &  \bdd{A}_{II} & \bdd{B}_{I \iota} \\
	\bdd{B}_{\iota E} & \bdd{0} & \bdd{B}_{\iota I} & \bdd{0}
	\end{array}
	\end{bmatrix}
	\begin{bmatrix}
	\begin{array}{c}
	\vec{u}_E \\ \vec{p}_e \\ \hline \vec{u}_I^{*} \\ \vec{p}_{\iota}^{*}
	\end{array}
	\end{bmatrix}.
	\end{align*}
	and then \cref{eq:dagger interior relations}, we obtain \cref{eq:inner product matrix}.
	Identities \cref{eq:matrix relations} are then obtained from \cref{eq:inner product matrix} as follows:
	\begin{enumerate}
		\item[(a)] Choose $p = q = 0$. Then, $(\bdd{\Pi}_{\bdd{V}} \bdd{u}^{\dagger}, \Pi_{Q} \bdd{u}), ( \bdd{\Pi}_{\bdd{V}} \bdd{v}, \Pi_{Q} \bdd{v}) \in \bdd{V} \times Q_{I}$ by \cref{thm:extension velocity continuity}, so $b(\bdd{\Pi}_{\bdd{V}} \bdd{v}, \Pi_{Q} \bdd{u}) + b(\bdd{\Pi}_{\bdd{V}} \bdd{u}, \Pi_{Q} \bdd{v}) = 0$ by \cref{eq:stokes extension u and p 2};  \cref{eq:matrix relations 1} follows.
		
		\item[(b)] Choose $p = 0$ and $\bdd{v} = \bdd{0}$. By \cref{thm:extension pressure continuity}, $\mathscr{E}(\bdd{0}, q) = (\bdd{0}, \tilde{\Pi} q)$, and \cref{eq:matrix relations 2} follows.
		
		\item[(c)] Choose $\bdd{u} = \bdd{v} = \bdd{0}$. By \cref{thm:extension pressure continuity}, $\mathscr{E}(\bdd{0}, p) = (\bdd{0}, \tilde{\Pi} p)$, $\mathscr{E}(\bdd{0}, q) = (\bdd{0}, \tilde{\Pi} q)$, and so $
		\vec{q}_e^T \widetilde{\bdd{C}} \vec{p}_e = 0$. Furthermore, 
		\begin{align*}
		\vec{q}_e^{T} \vec{g}_e^{*} = \vec{q}_e^{T} \widetilde{\bdd{B}} \vec{u}_{E} = b(\bdd{\Pi}_{\bdd{V}} \bdd{u}, \tilde{\Pi} q) = -b(\bdd{u} - \bdd{\Pi}_{\bdd{V}} \bdd{u}, \tilde{\Pi} q), \quad \forall {q} \in {Q}
		\end{align*}
		by \cref{eq:discrete stokes system 2}. Since $\dive(\bdd{u} - \bdd{\Pi}_{\bdd{V}} \bdd{u}) \in \dive \bdd{V}_I = Q_{I} \perp Q_I^{\perp}$, $b(\bdd{u} - \bdd{\Pi}_{\bdd{V}} \bdd{u}, \tilde{q}) = 0$, which completes \cref{eq:matrix relations 3}.
	\end{enumerate}
\end{proof}

The main result of this section relates the Schur complement problem \cref{eq:Schur complement system} to a Stokes problem posed on the boundary spaces $\tilde{\bdd{V}}_E \times \tilde{Q}_E$:
\begin{theorem}
	\label{thm:schur complement variational form}
	The Schur complement system \cref{eq:Schur complement system}, is equivalent to the following variational problem: Find $(\bdd{u}, p) \in \tilde{\bdd{V}}_E \times \tilde{Q}_E$ such that
	\begin{subequations}
		\label{eq:variational form schur complement}
		\begin{align}
		a(\bdd{u}, \bdd{v}) + b(\bdd{v}, p) &= (\bdd{f}, \bdd{v}) & &\forall \bdd{v} \in \tilde{\bdd{V}}_E \\
		b(\bdd{u}, {q}) &= 0 & &\forall {q} \in \tilde{Q}_E.
		\end{align}
	\end{subequations}
	Moreover, the nonzero eigenvalues of the generalized eigenvalue problem $\widetilde{\bdd{B}} \widetilde{\bdd{A}}^{-1} \widetilde{\bdd{B}}^T \vec{q}_e$ $= \lambda \widetilde{\bdd{M}} \vec{q}_e$ are contained in the interval $[\beta^2, 1]$, where $\widetilde{\bdd{M}}$ is the matrix associated with the $L^2(\Omega)$-inner product on ${Q}_I^{\perp}$ and $\beta$ is the inf-sup constant in \cref{eq:inf-sup global spaces}. In particular, the nonzero eigenvalues $\lambda$ are uniformly bounded away from zero in $h$ and $k$.
\end{theorem}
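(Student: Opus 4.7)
The first step is to translate the algebraic Schur problem into a saddle-point problem on $\tilde{\bdd{V}}_E \times \tilde{Q}_E$ using Lemma~\ref{lem:matrix relations}. By \cref{eq:inner product matrix} together with $\widetilde{\bdd{C}} = \bdd{0}$ from \cref{eq:matrix relations 3},
\begin{equation*}
\begin{bmatrix} \vec{v}_E \\ \vec{q}_e \end{bmatrix}^T \bdd{S} \begin{bmatrix} \vec{u}_E \\ \vec{p}_e \end{bmatrix}
= a(\bdd{\Pi}_{\bdd{V}} \bdd{u}, \bdd{\Pi}_{\bdd{V}} \bdd{v}) + b(\bdd{\Pi}_{\bdd{V}} \bdd{v}, \tilde{\Pi} p) + b(\bdd{\Pi}_{\bdd{V}} \bdd{u}, \tilde{\Pi} q),
\end{equation*}
where $\bdd{u} := \vec{\Phi}_E^T \vec{u}_E$ and $p := \vec{\psi}_e^T \vec{p}_e$ (and analogously for the test side). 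I would then show that $\vec{u}_E \mapsto \bdd{\Pi}_{\bdd{V}} \bdd{u}$ and $\vec{p}_e \mapsto \tilde{\Pi} p$ are linear bijections onto $\tilde{\bdd{V}}_E$ and $Q_I^\perp$, respectively. Surjectivity on the velocity side uses \cref{thm:tilde interior decomposition} and \cref{thm:extension velocity continuity}: any $\tilde{\bdd{v}} \in \tilde{\bdd{V}}_E$ is fixed by $\bdd{\Pi}_{\bdd{V}}$, and its exterior DOFs already determine its trace on every $\partial K$. On the pressure side, the exterior basis $\vec{\psi}_e$ is constructed precisely to span $Q_I^\perp$, so $\tilde{\Pi}$ acts as the identity there. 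Restricting to $L^2_0(\Omega)$ quotients out the one-dimensional constant mode that is the known null space of $\bdd{S}$ and leaves a bijection onto $\tilde{Q}_E$. Finally, the right-hand side is matched using $\vec{g}_e^* = \vec{0}$ from \cref{eq:matrix relations 3} and the static-condensation formula, together with the fact that $\bdd{u} - \bdd{\Pi}_{\bdd{V}} \bdd{u} \in \bdd{V}_I$ so the interior-load contributions reassemble into $(\bdd{f}, \bdd{\Pi}_{\bdd{V}} \bdd{u})$.

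\textbf{Plan for the eigenvalue bounds.} For the generalized eigenvalue problem, I would work directly with the Rayleigh quotient. Using the standard characterization
\begin{equation*}
\vec{q}_e^T \widetilde{\bdd{B}} \widetilde{\bdd{A}}^{-1} \widetilde{\bdd{B}}^T \vec{q}_e
= \sup_{\vec{u}_E \neq \vec{0}} \frac{(\vec{u}_E^T \widetilde{\bdd{B}}^T \vec{q}_e)^2}{\vec{u}_E^T \widetilde{\bdd{A}} \vec{u}_E}
\end{equation*}
combined with the identities \cref{eq:matrix relations 1,eq:matrix relations 2} and the velocity bijection from the previous step, this supremum rewrites as $\sup_{\tilde{\bdd{v}} \in \tilde{\bdd{V}}_E \setminus \{\bdd{0}\}} b(\tilde{\bdd{v}}, \tilde{q})^2 / |\tilde{\bdd{v}}|_{\bdd{H}^1(\Omega)}^2$ with $\tilde{q} := \tilde{\Pi} p$. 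Since $\widetilde{\bdd{M}}$ is the $L^2$-Gram matrix of the exterior pressure basis, $\vec{q}_e^T \widetilde{\bdd{M}} \vec{q}_e = \|\tilde{q}\|_{L^2(\Omega)}^2$, so the generalized Rayleigh quotient becomes a ratio entirely in the continuous variables $(\tilde{\bdd{v}}, \tilde{q}) \in \tilde{\bdd{V}}_E \times Q_I^\perp$.

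\textbf{Bounds and main obstacle.} For $\vec{q}_e$ outside the null space of $\bdd{S}$, the pressure $\tilde{q}$ is nonzero and has zero mean, hence lies in $\tilde{Q}_E$; the lower bound $\geq \beta^2$ is then immediate from the boundary inf-sup condition \cref{eq:inf-sup global boundary spaces} of \cref{thm:inf-sup global boundary spaces}. The upper bound uses Cauchy--Schwarz and the elementary identity
\begin{equation*}
\|\dive \tilde{\bdd{v}}\|_{L^2(\Omega)}^2 = |\tilde{\bdd{v}}|_{\bdd{H}^1(\Omega)}^2 - \|\partial_y v_1 - \partial_x v_2\|_{L^2(\Omega)}^2 \leq |\tilde{\bdd{v}}|_{\bdd{H}^1(\Omega)}^2
\end{equation*}
valid for $\tilde{\bdd{v}} = (v_1,v_2) \in \bdd{H}^1_0(\Omega)$ (proved by integration by parts on the cross-term $\int \partial_x v_1 \, \partial_y v_2$), giving $|b(\tilde{\bdd{v}}, \tilde{q})| \leq |\tilde{\bdd{v}}|_{\bdd{H}^1(\Omega)} \|\tilde{q}\|_{L^2(\Omega)}$ and hence the upper bound $1$. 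The main obstacle is not technical but rather bookkeeping: one must carefully track that the pressure bijection lands in $Q_I^\perp$ (one dimension larger than $\tilde{Q}_E$) and that the $\vec{q}_e$-vectors contributing nonzero eigenvalues correspond exactly to zero-mean elements of $Q_I^\perp$, which is $\tilde{Q}_E$. Once this identification is stated, the inf-sup estimate can be applied cleanly.
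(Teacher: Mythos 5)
Your proposal is correct and follows essentially the same route as the paper: the variational identification comes from \cref{lem:matrix relations} combined with the facts that $\bdd{\Pi}_{\bdd{V}}$ fixes $\tilde{\bdd{V}}_E$ and $\tilde{\Pi}$ fixes $Q_I^{\perp}$ (with $\tilde{Q}_E = Q_I^{\perp}\cap L^2_0(\Omega)$ handling the constant mode), and the eigenvalue bounds come from the boundary inf-sup condition \cref{eq:inf-sup global boundary spaces}. The only difference is that you write out explicitly the Rayleigh-quotient argument and the bound $\|\dive \bdd{v}\|_{L^2(\Omega)} \leq |\bdd{v}|_{\bdd{H}^1(\Omega)}$, which the paper delegates to the citation of \cite[Theorem 3.22]{Elman14}.
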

\begin{proof}
	Let $(\bdd{u}, p), (\bdd{v}, q) \in \tilde{\bdd{V}}_E \times Q_I^{\perp}$. Substituting the identities in \cref{thm:extension velocity continuity,thm:extension pressure continuity} into \cref{eq:matrix relations} gives
	\begin{align*}
	\begin{bmatrix}
	\vec{v}_E \\ \vec{q}_e \end{bmatrix}^T
	\bdd{S}
	\begin{bmatrix}
	\vec{u}_E \\ \vec{p}_{e} \end{bmatrix} &= \vec{v}_E^T \widetilde{\bdd{A}} \vec{u}_E^T + \vec{q}_e^T \widetilde{\bdd{B}} \vec{u}_E + \vec{v}_E \widetilde{\bdd{B}}^T \vec{p}_e  \\
	&= a(\bdd{\Pi}_{\bdd{V}} \bdd{u}, \bdd{\Pi}_{\bdd{V}} \bdd{v}) + b(\bdd{\Pi}_{\bdd{V}} \bdd{v}, \tilde{\Pi} p) + b(\bdd{\Pi}_{\bdd{V}} \bdd{u}, \tilde{\Pi} q)  \\
	&= a(\bdd{u}, \bdd{v}) + b(\bdd{v}, p) + b(\bdd{u}, q).
	\end{align*}
	\cref{eq:variational form schur complement} now follows from \cref{eq:matrix relations 3} on noting that $\tilde{Q}_E = Q_I^{\perp} \cap L^2_0(\Omega)$. Arguing as in \cite[Theorem 3.22]{Elman14}, the eigenvalue bound follows from the inf-sup condition \cref{eq:inf-sup global boundary spaces}. 
\end{proof}

\section{Basis Functions}
\label{sec:basis functions}

We first define a basis $\{\phi_i \}$ for the space of scalar-valued functions $V$. The basis is constructed so that the exclusion of particular functions gives a basis for $V_0 = V \cap H^1_0(\Omega)$, which simplifies both the enforcement of homogeneous boundary conditions and the implementation of the preconditioner. A basis $\{ \bdd{\Phi}_i \}$ for the velocity space $\bdd{V}_0 = V_0 \times V_0$ is then obtained using functions of the form $\phi_j \unitvec{e}_1$ and $\phi_j \unitvec{e}_2$. For the pressure space, we only give a basis for $Q$ since the space $Q_0$ is not used in the actual implementation.

\subsection{Basis Functions on a Reference Triangle}

We begin by defining basis functions for the pressure and velocity spaces on the reference triangle $\hat{T}$ shown in \cref{fig:reference triangle}.

\subsubsection{Pressure Basis Functions}
Let $\{B^{k}_{\alpha} \}_{\alpha \in \mathcal{I}}$ denote the Bernstein polynomials \cite{Lai07}:
\begin{align}
\label{eq:bernstein poly}
B^{k}_{\alpha} = \frac{k!}{\alpha_1 ! \alpha_2 ! \alpha_3! } \lambda_1^{\alpha_1} \lambda_2^{\alpha_2} \lambda_3^{\alpha_3},
\end{align}
where $\mathcal{I} = \{ \alpha \in \mathbb{Z}_{+}^{3} : |\alpha| = k \}$ and $\{\lambda_i, \ 1 \leq i \leq 3\}$, are the barycentric coordinates on the reference triangle $\hat{T}$. The set $\{ B_{\alpha}^{k} \}_{\alpha \in \mathcal{I}}$ forms a basis for $\mathcal{P}_{k}(\hat{T})$ \cite{Lai07}. Each Bernstein polynomial $B_{\alpha}^{k}$ can be identified with the domain point $\bdd{x}_{\alpha} = \frac{\alpha_1}{ k } \hat{\bdd{a}}_1 +  \frac{\alpha_2}{ k } \hat{\bdd{a}}_2 + \frac{\alpha_3}{ k } \hat{\bdd{a}}_3$
on the reference triangle. Let $\mathcal{I}_0 = \{ \alpha \in \mathcal{I} : \alpha_i < k \}$ denote the subset corresponding to interior (non-vertex) points. Fix any $\beta \in \mathcal{I}_0$; since all the Bernstein polynomials \cref{eq:bernstein poly} share the same average value, the set $\{ B_{\alpha}^{k} - B_{\beta}^{k} \}_{\alpha \in \mathcal{I} \setminus \{\beta\} }$ is a basis for $\mathcal{P}_{k}(\hat{T}) \cap L^2_0(\hat{T})$. This set can be partitioned into:
\begin{enumerate}
	\item[(i)] Vertex functions: $	\hat{\psi}_{i} := B^{k}_{ke_i} - B^{k}_{\beta }$, $1 \leq i \leq 3$,	satisfying $\int_{\hat{T}} \hat{\psi}_i d\bdd{x} = 0$ and  $\hat{\psi}_{i}(\hat{\bdd{a}}_{j}) = \delta_{ij}$ for $1 \leq i, j \leq 3$.
	
	\item[(ii)] Interior functions: $\hat{\psi}_{\iota,\alpha} := B^{k}_{\alpha} - B_{\beta}^{k}$, $\alpha \in \mathcal{I}_0 \setminus \{\beta\}$, satisfying $\int_{\hat{T}} \hat{\psi}_{\iota, \alpha} d\bdd{x} = 0$ and $\hat{\psi}_{\iota,\alpha}(\hat{\bdd{a}}_i) = 0$, $1 \leq i \leq 3$.
\end{enumerate}
In order to obtain a basis for $\mathcal{P}_{k}(\hat{T})$, we supplement this set with one additional function:
\begin{enumerate}
	\item[(iii)] Average value function
	\begin{align}
	\label{eq:pressure average value func}
	\hat{\psi}_{\hat{T}} := 1 - \sum_{i=1}^{3} \hat{\psi}_{i}.
	\end{align} 
	satisfying $|\hat{T}|^{-1} \int_{\hat{T}} \hat{\psi}_{\hat{T}} d \bdd{x} = 1$ and $\hat{\psi}_{\hat{T}}(\bdd{a}_i) = 0$, $1 \leq i \leq 3$.
\end{enumerate}
In summary, there are 3 vertex functions, one average value function and $\frac{1}{2}(k+1)(k+2) - 4$ interior functions which total $\frac{1}{2}(k+1)(k+2) = \dim \mathcal{P}_k(\hat{T})$, and form a basis for the pressure space $Q = \mathcal{P}_k(\hat{T})$ on the reference element.

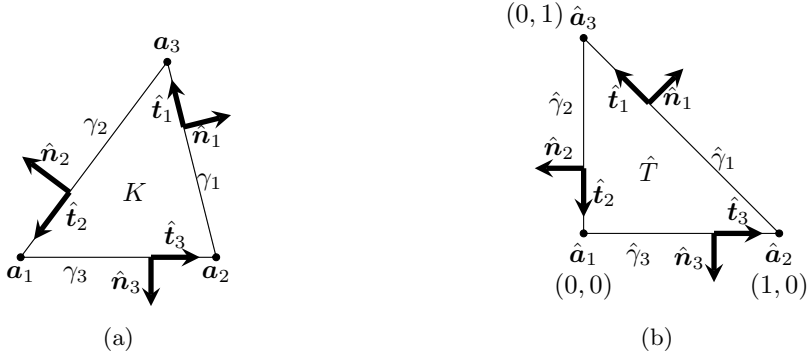
\begin{figure}[hbt]
	\centering
	\begin{subfigure}[b]{0.45\linewidth}
		\centering
		\begin{tikzpicture}[scale=0.65]
		\filldraw (-2,0) circle (2pt) node[align=center,below]{$\bdd{a}_1$}
		-- (2,0) circle (2pt) node[align=center,below]{$\bdd{a}_2$}	
		-- (1, 4) circle (2pt) node[align=center,above]{$\bdd{a}_3$}
		-- (-2,0);
		
		\coordinate (a1) at (-2,0);
		\coordinate (a2) at (2,0);
		\coordinate (a3) at (1, 4);
		
		\coordinate (e113) at ($(a2)!1/3!(a3)$);
		\coordinate (e123) at ($(a2)!2/3!(a3)$);
		\coordinate (e1231) at ($(a2)!2/3+1/sqrt(17)!(a3)$);
		
		\coordinate (e213) at ($(a3)!1/3!(a1)$);
		\coordinate (e223) at ($(a3)!2/3!(a1)$);
		\coordinate (e2231) at ($(a3)!2/3+1/sqrt(17)!(a1)$);
		
		\coordinate (e313) at ($(a1)!1/3!(a2)$);
		\coordinate (e323) at ($(a1)!2/3!(a2)$);
		\coordinate (e3231) at ($(a1)!2/3+1/4!(a2)$);
		
		\draw ($(e113)+(0.2,0.2)$) node[align=center]{$\gamma_1$};
		\draw ($(e213)+(0,0)$) node[align=center,left]{$\gamma_2$};
		\draw ($(e313)+(-0.2,0)$) node[align=center,below]{$\gamma_3$};
		
		\draw[line width=2, -stealth] (e123) -- (e1231);
		\draw ($($(e123)!0.5!(e1231)$)+(-0.3,-0.1)$) node[align=center]{$\unitvec{t}_1$};
		\draw[line width=2, -stealth] (e123) -- ($(e123)!1!-90:(e1231)$);
		\draw ($($(e123)!0.5!-90:(e1231)$)+(0.,-0.3)$) node[align=center]{$\unitvec{n}_1$};
		\draw[line width=2, -stealth] (e223) -- (e2231);
		\draw ($($(e223)!0.5!(e2231)$)+(0.1,0)$) node[align=center, right]{$\unitvec{t}_2$};
		\draw[line width=2, -stealth] (e223) -- ($(e223)!1!-90:(e2231)$);
		\draw ($($(e223)!0.5!-90:(e2231)$)+(0.2,0)$) node[align=center, above]{$\unitvec{n}_2$};
		\draw[line width=2, -stealth] (e323) -- (e3231);
		\draw ($($(e323)!0.5!(e3231)$)+(0,0)$) node[align=center, above]{$\unitvec{t}_3$};
		\draw[line width=2, -stealth] (e323) -- ($(e323)!1!-90:(e3231)$);
		\draw ($($(e323)!0.5!-90:(e3231)$)+(0,0)$) node[align=center, left]{$\unitvec{n}_3$};
		
		\draw (1/3, 4/3) node(T){$K$};
		
		\end{tikzpicture}	
		\caption{}
		\label{fig:general triangle}
	\end{subfigure}
	\hfill
	\begin{subfigure}[b]{0.45\linewidth}
		\centering
		\begin{tikzpicture}[scale=0.65]
		\filldraw (0,0) circle (2pt) node[align=center,below]{$\hat{\bdd{a}}_1$ \\ $(0,0)$}
		-- (4,0) circle (2pt) node[align=center,below]{$\hat{\bdd{a}}_2$\\ $(1,0)$}	
		-- (0,4) circle (2pt) node[align=center,above]{$\hat{\bdd{a}}_3$}
		-- (0,0);
		
		\draw (-1, 4) node[align=center, above]{$(0, 1)$}; 
		
		\coordinate (a1) at (0,0);
		\coordinate (a2) at (4,0);
		\coordinate (a3) at (0,4);
		
		\coordinate (e113) at ($(a2)!1/3!(a3)$);
		\coordinate (e123) at ($(a2)!2/3!(a3)$);
		\coordinate (e1231) at ($(a2)!2/3+1/sqrt(32)!(a3)$);
		
		\coordinate (e213) at ($(a3)!1/3!(a1)$);
		\coordinate (e223) at ($(a3)!2/3!(a1)$);
		\coordinate (e2231) at ($(a3)!2/3+1/4!(a1)$);
		
		\coordinate (e313) at ($(a1)!1/3!(a2)$);
		\coordinate (e323) at ($(a1)!2/3!(a2)$);
		\coordinate (e3231) at ($(a1)!2/3+1/4!(a2)$);
		
		\draw ($(e113)+(0.2,0.2)$) node[align=center]{$\hat{\gamma}_1$};
		\draw ($(e213)+(0,0)$) node[align=center,left]{$\hat{\gamma}_2$};
		\draw ($(e313)+(-0.2,0)$) node[align=center,below]{$\hat{\gamma}_3$};
		
		\draw[line width=2, -stealth] (e123) -- (e1231);
		\draw ($($(e123)!0.5!(e1231)$)+(-0.25,-0.25)$) node[align=center]{$\unitvec{t}_1$};
		\draw[line width=2, -stealth] (e123) -- ($(e123)!1!-90:(e1231)$);
		\draw ($($(e123)!0.5!-90:(e1231)$)+(0.25,-0.25)$) node[align=center]{$\unitvec{n}_1$};
		\draw[line width=2, -stealth] (e223) -- (e2231);
		\draw ($($(e223)!0.5!(e2231)$)+(0,0)$) node[align=center, right]{$\unitvec{t}_2$};
		\draw[line width=2, -stealth] (e223) -- ($(e223)!1!-90:(e2231)$);
		\draw ($($(e223)!0.5!-90:(e2231)$)+(0,0)$) node[align=center, above]{$\unitvec{n}_2$};
		\draw[line width=2, -stealth] (e323) -- (e3231);
		\draw ($($(e323)!0.5!(e3231)$)+(0,0)$) node[align=center, above]{$\unitvec{t}_3$};
		\draw[line width=2, -stealth] (e323) -- ($(e323)!1!-90:(e3231)$);
		\draw ($($(e323)!0.5!-90:(e3231)$)+(0,0)$) node[align=center, left]{$\unitvec{n}_3$};

		\draw (4/3, 4/3) node(T){$\hat{T}$};
		\end{tikzpicture}		
		\caption{}
		\label{fig:reference triangle}
	\end{subfigure}
	\caption{Notation for (a) general triangle $K$ and (b) reference triangle $\hat{T}$.}
\end{figure}

\subsubsection{Velocity Basis Functions}

The construction of the basis functions for the velocity space $V$ is more complicated owing to the higher continuity requirement. In particular, the basis functions $\{ \hat{\phi}_{k}^{\beta} \}$, $|\beta| =1$, $k \in \{1,2,3\}$, associated with the derivative degrees of freedom at the vertices should satisfy $D^{\alpha} \hat{\phi}_{k}^{\beta}(\hat{\bdd{a}}_l) = \delta_{ \alpha \beta} \delta_{kl}$, $|\alpha| = 1$, $l \in \{1,2,3\}$. In order to construct these functions, we begin by considering the vector valued function given by
\begin{align}
\label{eq:vec J definition}
\vec{J}_1 := \frac{ \lambda_1^2 }{ P_{k-3}^{(3,3)}(-1)}  \begin{bmatrix}
\lambda_2 P_{k-3}^{(3,3)}(\lambda_{2} - \lambda_{1}) \\
\lambda_3 P_{k-3}^{(3,3)}(\lambda_{3} - \lambda_{1})
\end{bmatrix},
\end{align}
where ${P}_{k}^{(3,3)}$ is the Jacobi polynomial of degree $k$ \cite{Sz39}. The first component of $\vec{J}_1$ vanishes on edge $\gamma_2$ and the gradient at $\hat{\bdd{a}}_1$ is given by $\begin{bmatrix}
1 & 0
\end{bmatrix}^T$, while the second component vanishes on edge $\hat{\gamma}_3$ and has gradient $\begin{bmatrix}
0 & 1
\end{bmatrix}^T$ at $\hat{\bdd{a}}_1$. The factor $\lambda_1^2$ means that both components of $\vec{J}_1$ and their gradients vanish on the edge $\hat{\gamma}_1$. In summary, since $x = \lambda_2$ and $y = \lambda_3$, we have
\begin{align}
\label{eq:j1 derivative property}
\vec{J}_1(\hat{\bdd{a}}_k) = \vec{0} \quad \text{and} \quad \begin{bmatrix}
\frac{\partial}{\partial x} \\
\frac{\partial}{\partial y}
\end{bmatrix} \vec{J}_1^T(\hat{\bdd{a}}_k) =  \begin{bmatrix}
\frac{\partial}{\partial \lambda_2} \\  \frac{\partial}{\partial \lambda_3}
\end{bmatrix} \vec{J}_1^T(\hat{\bdd{a}}_k) = \delta_{kl} \begin{bmatrix}
1 & 0 \\ 0 & 1
\end{bmatrix}.
\end{align}
Defining $\vec{J}_2$ and $\vec{J}_3$ by cyclic permutations of the indices, we conclude that $\vec{J}_2$ and $\vec{J}_3$ vanish at the vertices and that, for $k \in \{1,2,3\}$,
\begin{align}
\label{eq:J other properties}
\begin{bmatrix}
\frac{\partial}{\partial \lambda_3} \\ \frac{\partial}{\partial \lambda_1}
\end{bmatrix} \vec{J}_2^T(\hat{\bdd{a}}_k) = \delta_{k2} \begin{bmatrix}
1 & 0 \\ 0 & 1
\end{bmatrix} \quad \text{and} \quad \begin{bmatrix}
\frac{\partial}{\partial \lambda_1} \\ \frac{\partial}{\partial \lambda_2}
\end{bmatrix} \vec{J}_3^T(\hat{\bdd{a}}_k) = \delta_{k3} \begin{bmatrix}
1 & 0 \\ 0 & 1
\end{bmatrix}.
\end{align}
Substituting the identities
\begin{align*}
\begin{bmatrix}
\frac{\partial}{\partial \lambda_3} \\ \frac{\partial}{\partial \lambda_1}
\end{bmatrix} = \begin{bmatrix}
-1 & 1 \\ 
-1 & 0
\end{bmatrix} \begin{bmatrix}
\frac{\partial}{\partial x} \\ \frac{\partial}{\partial y}
\end{bmatrix} \quad \text{and} \quad \begin{bmatrix}
\frac{\partial}{\partial \lambda_1} \\ \frac{\partial}{\partial \lambda_2}
\end{bmatrix} = \begin{bmatrix}
0 & -1 \\ 
1 & -1
\end{bmatrix} \begin{bmatrix}
\frac{\partial}{\partial x} \\ \frac{\partial}{\partial y}
\end{bmatrix}
\end{align*}
in \cref{eq:J other properties} and rearranging gives
\begin{align}
\begin{bmatrix}
\frac{\partial}{\partial x} \\ \frac{\partial}{\partial y}
\end{bmatrix} \left( \begin{bmatrix}
-1 & -1 \\
1 & 0
\end{bmatrix} \vec{J}_2 \right)^T(\hat{\bdd{a}}_{k}) &= \delta_{k2} \begin{bmatrix}
1 & 0 \\ 0 & 1
\end{bmatrix} \label{eq:j2 derivative property} \\
\begin{bmatrix}
\frac{\partial}{\partial x} \\ \frac{\partial}{\partial y}
\end{bmatrix} \left( \begin{bmatrix}
0 & 1 \\
-1 & -1
\end{bmatrix} \vec{J}_3 \right)^T(\hat{\bdd{a}}_{k}) &= \delta_{k3} \begin{bmatrix}
1 & 0 \\ 0 & 1
\end{bmatrix}. \label{eq:j3 derivative property}
\end{align}	
Armed with \cref{eq:j1 derivative property,eq:j2 derivative property,eq:j3 derivative property}, we define the basis functions for the velocity space as follows:

\begin{enumerate}
	\item[(i)] $C^0$ vertex functions: $\hat{\phi}_{i} = \lambda_i^2(3 - 2\lambda_i)$, $\leq i \leq 3$, 
	satisfying $\hat{\phi}_i(\hat{\bdd{a}}_j) = \delta_{ij}$, $D\hat{\phi}_i(\hat{\bdd{a}}_j) = \bdd{0}$, and $\hat{\phi}_i|_{\hat{\gamma}_i} = 0$ for $1 \leq i, j \leq 3$.

	\item[(ii)] $C^1$ vertex functions
	\begin{align*}
	\begin{bmatrix}
	\hat{\phi}_{1}^{(1,0)} \\
	\hat{\phi}_{1}^{(0,1)}
	\end{bmatrix} &= \begin{bmatrix}
	1 & 0 \\
	0 & 1
	\end{bmatrix} \vec{J}_1; & \begin{bmatrix}
	\hat{\phi}_{2}^{(1,0)} \\
	\hat{\phi}_{2}^{(0,1)}
	\end{bmatrix} &= \begin{bmatrix}
	-1 & -1 \\
	1 & 0
	\end{bmatrix} \vec{J}_2; & \begin{bmatrix}
	\hat{\phi}_{3}^{(1,0)} \\
	\hat{\phi}_{3}^{(0,1)}
	\end{bmatrix} &= \begin{bmatrix}
	0 & 1 \\
	-1 & -1
	\end{bmatrix} \vec{J}_3
	\end{align*}
	which, thanks to \cref{eq:j1 derivative property,eq:j2 derivative property,eq:j3 derivative property}, 
	satisfy $D^{\beta} \hat{\phi}_{i}^{\alpha}(\hat{\bdd{a}}_j) = \delta_{\alpha\beta} \delta_{ij}$ and $\hat{\phi}_i|_{\hat{\gamma}_i} = 0$ for $1 \leq i, j \leq 3$, $|\alpha| = 1$, $|\beta| \leq 1$.
	
	\item[(iii)] Edge functions: Let $\hat{\gamma}$ be the edge connecting vertices $\hat{\bdd{a}}_i$ and $\hat{\bdd{a}}_j$; then the basis functions associated with the edge are defined by $\hat{\phi}_{\hat{\gamma}, l} = \lambda_{i}^2 \lambda_{j}^2 r_l(\lambda_{j} - \lambda_i)$ where $\{ r_l \}$ is any basis for $\mathcal{P}_{k-4}((-1,1))$. These functions satisfy $D^{\alpha} \hat{\phi}_{\hat{\gamma}, l}(\hat{\bdd{a}}_k)$ $= 0$ for $|\alpha| \leq 1$, $1 \leq k \leq 3$ and $\hat{\phi}_{\hat{\gamma}, l}|_{\hat{\gamma}'} = \delta_{ \hat{\gamma} \hat{\gamma'}}$ for $\hat{\gamma}' \in \mathcal{E}_{\hat{T}}$.
	
	\item[(iv)] Interior functions: The basis functions associated with the element interior are defined by
	$\hat{\phi}_{I, l} = \lambda_1 \lambda_2 \lambda_3 s_{l}$
	where $\{ s_l \}$ is any basis for $\mathcal{P}_{k-3}(\hat{T})$. These functions satisfy $\hat{\phi}_{I, l}|_{\partial \hat{T}} = 0$ and $D\hat{\phi}_{I, l}(\hat{\bdd{a}}_k) = \bdd{0}$ for $1 \leq k \leq 3$.
\end{enumerate}
It is easily seen that the above functions are linearly independent. Furthermore, there are 3 functions per vertex, $\dim \mathcal{P}_{k-4}((-1,1)) = k-3$ functions per edge, and $\dim \mathcal{P}_{k-3}(\hat{T}) = \frac{1}{2}(k-2)(k-1)$ interior functions which total $\frac{1}{2}(k+1)(k+2) = \dim \mathcal{P}_k(\hat{T})$. Hence, the above functions also form a basis for $\mathcal{P}_k(\hat{T})$.

\subsection{Basis Functions on a Mesh}
\label{sec:basis functions mesh}

We now define the global basis functions for the spaces $Q$ and $V$. The lower continuity requirements imposed at corner vertices $\mathcal{V}_C$ means that extra care must be taken when defining the global basis functions associated with $\mathcal{V}_C$.

\subsubsection{Pressure Basis Functions}

The pressure space $Q$ requires $C^0$ continuity at all vertices except at corner vertices, where the functions are allowed to be discontinuous. This means that \textit{each element has its own degree of freedom} at vertices $\bdd{a} \in \mathcal{V}_C$, whilst at the remaining vertices $\bdd{a} \in \mathcal{V} \setminus \mathcal{V}_C$, all elements \textit{share a single degree of freedom} at the common vertex as shown in \cref{fig:pressure vertex degrees of freedom example}. Consequently, any given vertex $\bdd{a} \in \mathcal{V}$ is associated with either (a) \textit{a single} basis function supported on the patch $\mathcal{T}_{\bdd{a}}$ if $\bdd{a} \in \mathcal{V} \setminus \mathcal{V}_C$, or (b) \textit{a collection} of basis functions, each of which is supported on a single element $K \in \mathcal{T}_{\bdd{a}}$ if $\bdd{a} \in \mathcal{V}_C$. The set of supports of the pressure functions associated with a vertex $\bdd{a} \in \mathcal{V}$ is defined by
\begin{align*}
\Omega_{\bdd{a}} = \begin{cases}
\{ \mathcal{T}_{\bdd{a}} \} & \bdd{a} \in \mathcal{V}_C \\
\{ K \in \mathcal{T}_{\bdd{a}} \} & \bdd{a} \in \mathcal{V} \setminus \mathcal{V}_C.
\end{cases}
\end{align*}
That is, the cardinality of these sets is $|\Omega_{\bdd{a}}| = 1$ for noncorner vertices (since there is only one vertex basis function associated to $\bdd{a}$) whilst $|\Omega_{\bdd{a}}| \geq 2$ for corner vertices, thanks to the assumption that the mesh is corner-split into at least two elements. The corresponding global vertex functions $\{\psi_{\bdd{a}}^{\omega} : \bdd{a} \in \mathcal{V}, \ \omega \in \Omega_{\bdd{a}} \} $ are defined to be pull-backs in the usual way:
\begin{align}
\label{eq:global pressure vertex}
\psi_{\bdd{a}}^{\omega} &= \begin{cases}
\hat{\psi}_{i} \circ \bdd{F}_K^{-1} & \text{on } K \subseteq \omega, \\
0 & \text{otherwise},
\end{cases} & \psi_K &= \begin{cases}
\hat{\psi}_{\hat{T}} \circ \bdd{F}_K^{-1} & \text{on } K, \\
0 & \text{otherwise},
\end{cases}
\end{align}
where $\hat{\bdd{a}}_i = \bdd{F}_K^{-1}(\bdd{a})$. 

\begin{figure}
	\centering
	\begin{subfigure}{0.48\linewidth}
		\begin{tikzpicture}
		\coordinate(v0) at (0, 0);
		\coordinate(v1) at (3, 1);
		\coordinate(v2) at (1, 2);
		\coordinate(v3) at (0, 3);
		\coordinate(v4) at (-2, 2);	
		
		\coordinate(i0) at (1.5, 0.5);	
		\coordinate(i1) at (1.5, 1.25);
		\coordinate(i2) at (0.5, 1);
		\coordinate(i3) at (-0.5, 2);
		\coordinate(i4) at (-1, 1);
		
		\draw (v0) -- (v1) -- (v2) -- (v3) -- (v4) -- (v0);
		
		\filldraw[blue] (i0) circle (1.5pt);
		\filldraw[blue] (i1) circle (1.5pt);
		\filldraw[blue] (i2) circle (1.5pt);
		\filldraw[blue] (i3) circle (1.5pt);
		\filldraw[blue] (i4) circle (1.5pt);
		
		\draw (v0) -- (i2);
		\draw (v1) -- (i1);
		\draw (i0) -- (i1);
		\draw (i0) -- (i2);
		\draw (i1) -- (i2);
		\draw (v2) -- (i1);
		\draw (v2) -- (i2);
		\draw (v2) -- (i3);
		\draw (v3) -- (i3);
		\draw (v4) -- (i3);
		\draw (i3) -- (i4);
		\draw (i3) -- (i2);
		\draw (i4) -- (i2);

		\filldraw[red] ($(v0) + (-0.05, 0.2)$) circle (1.5pt); 
		\filldraw[red]  ($(v0) + (0.2, 0.15)$) circle (1.5pt);
		\filldraw[red] ($(v1) + (-0.5, -0.04)$) circle (1.5pt); 
		\filldraw[red] ($(v1) + (-0.5, 0.16)$) circle (1.5pt); 
		\filldraw[red] ($(v2) + (0.25, -0.2)$) circle (1.5pt); 
		\filldraw[red] ($(v2) + (0.01, -0.23)$) circle (1.5pt); 
		\filldraw[red] ($(v2) + (-0.2, -0.1)$) circle (1.5pt);
		\filldraw[red] ($(v2) + (-0.25, 0.1)$) circle (1.5pt);
		\filldraw[red] ($(v3) + (0.05, -0.23)$) circle (1.5pt);
		\filldraw[red] ($(v3) + (-0.21, -0.2)$) circle (1.5pt);
		\filldraw[red] ($(v4) + (0.4, 0.1)$) circle (1.5pt);
		\filldraw[red] ($(v4) + (0.3, -0.1)$) circle (1.5pt);
		
		\filldraw[red] (1, 3) circle (1.5pt);
		\draw (1, 3) node[align=center, right]{Corner dof};
		\filldraw[blue] (1, 2.5) circle (1.5pt);
		\draw (1, 2.5) node[align=center, right]{Noncorner dof};
		\end{tikzpicture}
		\caption{}
		\label{fig:pressure vertex degrees of freedom example}
	\end{subfigure}
	\hfill
	\begin{subfigure}{0.48\linewidth}
		\begin{tikzpicture}
		\coordinate(v0) at (0, 0);
		\coordinate(v1) at (3, 1);
		\coordinate(v2) at (1, 2);
		\coordinate(v3) at (0, 3);
		\coordinate(v4) at (-2, 2);	
		
		\coordinate(i0) at (1.5, 0.5);	
		\coordinate(i1) at (1.5, 1.25);
		\coordinate(i2) at (0.5, 1);
		\coordinate(i3) at (-0.5, 2);
		\coordinate(i4) at (-1, 1);
		
		\draw (v0) -- (v1) -- (v2) -- (v3) -- (v4) -- (v0);
		
		\filldraw[red] (v0) circle (2.5pt); 
		\filldraw[red] (v1) circle (2.5pt);
		\filldraw[red] (v2) circle (2.5pt);
		\filldraw[red] (v3) circle (2.5pt);
		\filldraw[red] (v4) circle (2.5pt);
		
		\filldraw[green] (i0) circle (2.5pt);
		\filldraw[green] (i4) circle (2.5pt);
		
		\filldraw[blue] (i1) circle (2.5pt);
		\filldraw[blue] (i2) circle (2.5pt);
		\filldraw[blue] (i3) circle (2.5pt);

		\draw (v0) -- (i2);
		\draw (v1) -- (i1);
		\draw (i0) -- (i1);
		\draw (i0) -- (i2);
		\draw (i1) -- (i2);
		\draw (v2) -- (i1);
		\draw (v2) -- (i2);
		\draw (v2) -- (i3);
		\draw (v3) -- (i3);
		\draw (v4) -- (i3);
		\draw (i3) -- (i4);
		\draw (i3) -- (i2);
		\draw (i4) -- (i2);

		\draw[line width=2, -stealth] (i1) -- ($(i1)+(0.5,0)$);
		\draw[line width=2, -stealth] (i1) -- ($(i1)+(0,0.5)$);
		\draw[line width=2, -stealth] (i2) -- ($(i2)+(0.5,0)$);
		\draw[line width=2, -stealth] (i2) -- ($(i2)+(0,0.5)$);
		\draw[line width=2, -stealth] (i3) -- ($(i3)+(0.5,0)$);
		\draw[line width=2, -stealth] (i3) -- ($(i3)+(0,0.5)$);
		
		\draw[line width=2, -stealth] (i0) -- ($(i0)!0.5/sqrt(2.5)!(v1)$);
		\draw[line width=2, -stealth] (i0) -- ($(i0)!0.5/sqrt(2.5)!-90:(v1)$);
		\draw[line width=2, -stealth] (i4) -- ($(i4)!0.5/sqrt(2)!(v0)$);
		\draw[line width=2, -stealth] (i4) -- ($(i4)!0.5/sqrt(2)!-90:(v0)$);
		
		\draw[line width=2, -stealth] (v0) -- ($(v0)!0.5/sqrt(2.5)!(i0)$);
		\draw[line width=2, -stealth] (v0) -- ($(v0)!0.5/sqrt(1.25)!(i2)$);
		\draw[line width=2, -stealth] ($(v0)!0.5/sqrt(2)!(i4)$) -- (v0);
		
		\draw[line width=2, -stealth] ($(v1)!0.5/sqrt(2.5)!(i0)$) -- (v1);
		\draw[line width=2, -stealth] (v1) -- ($(v1)!0.5/sqrt(2.3125)!(i1)$);
		\draw[line width=2, -stealth] (v1) -- ($(v1)!0.5/sqrt(5)!(v2)$);
		
		\draw[line width=2, -stealth] ($(v2)!0.5/sqrt(5)!(v1)$) -- (v2);
		\draw[line width=2, -stealth] (v2) -- ($(v2)!0.5/sqrt(0.8125)!(i1)$);
		\draw[line width=2, -stealth] (v2) -- ($(v2)!0.5/sqrt(1.25)!(i2)$);
		\draw[line width=2, -stealth] (v2) -- ($(v2)!0.5/sqrt(2.25)!(i3)$);
		\draw[line width=2, -stealth] (v2) -- ($(v2)!0.5/sqrt(2)!(v3)$);
		
		\draw[line width=2, -stealth] ($(v3)!0.5/sqrt(2)!(v2)$) -- (v3);
		\draw[line width=2, -stealth] (v3) -- ($(v3)!0.5/sqrt(1.25)!(i3)$);
		\draw[line width=2, -stealth] (v3) -- ($(v3)!0.5/sqrt(5)!(v4)$);
		
		\draw[line width=2, -stealth] ($(v4)!0.5/sqrt(5)!(v3)$) -- (v4);
		\draw[line width=2, -stealth] (v4) -- ($(v4)!0.5/sqrt(2.25)!(i3)$);
		\draw[line width=2, -stealth] (v4) -- ($(v4)!0.5/sqrt(2)!(i4)$);
		
		\filldraw[black] (1, 3) circle (2.5pt);
		\draw (1, 3) node[align=center, right]{Functional dof};
		\draw[line width=2, -stealth] (1, 2.5) -- (1.5, 2.5);
		\draw (1.5, 2.5) node[align=center, right]{Derivative dof};
		\end{tikzpicture}
		\caption{}
		\label{fig:velocity example mesh}
	\end{subfigure}
	\caption{(a) The global pressure vertex degrees of freedom and (b) the global velocity vertex degrees of freedom on a mesh of an example domain. Observe that in (a) there are multiple pressure vertex degrees of freedom at corner vertices but only a single degree of freedom at interior vertices and in (b) there are three or more derivative degrees of freedom at corner vertices (red), two derivative degrees of freedom aligned with the domain boundary at noncorner boundary vertices (green), and two derivative degrees of freedom aligned with the coordinate axes at interior vertices (blue).}
\end{figure}
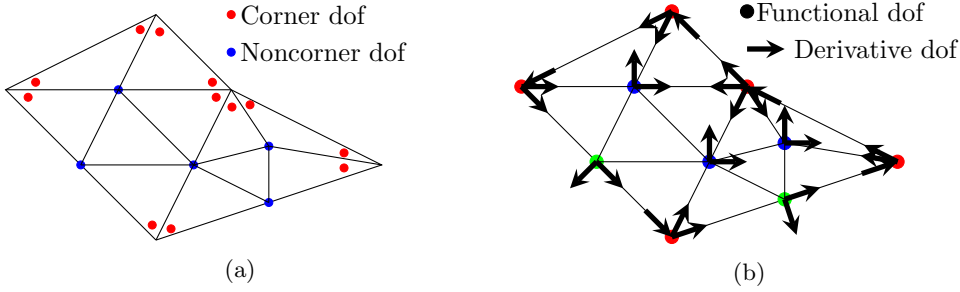

The average value functions and interior functions are simpler. Each element $K \in \mathcal{T}$ has a single function $\psi_K$, corresponding to the average value over $K$, defined by \cref{eq:global pressure vertex}.
Similarly, each element $K \in \mathcal{T}$ has $\frac{k}{2}(k+1) - 4$ interior functions also defined to be pull-backs.

\subsubsection{Velocity Basis Functions}

The velocity space $V$ imposes $C^1$ continuity at all vertices except at corner vertices, where only $C^0$-continuity is required to ensure $V \subset H^1(\Omega)$. This means that at corner vertices $\bdd{a} \in \mathcal{V}_C$, each element $K \in \mathcal{T}_{\bdd{a}}$ has two degrees of freedom for the gradient corresponding to the two tangential derivatives corresponding to the two edges of $K$ that meet at $\bdd{a}$. To enforce continuity between two neighboring elements in $\mathcal{T}_{\bdd{a}}$, the tangential derivative corresponding to the common edge must be \textit{shared between the two elements}. In other words, each corner vertex $\bdd{a} \in \mathcal{V}_C$ has \textit{one derivative degree of freedom for each edge} $\gamma \in \mathcal{E}_{\bdd{a}}$. For the remaining noncorner vertices $\bdd{a} \in \mathcal{V} \setminus \mathcal{V}_C$, all elements in $\mathcal{T}_{\bdd{a}}$ \textit{share two degrees of freedom} at the common vertex, corresponding to any two linearly independent directional derivatives as in \cref{fig:velocity example mesh}. Consequently, a given vertex $\bdd{a} \in \mathcal{V}$ is associated with either (a) \textit{two} basis functions supported on the patch $\mathcal{T}_{\bdd{a}}$ if $\bdd{a} \in \mathcal{V} \setminus \mathcal{V}_C$, or (b) \textit{a collection} of basis functions, each of which is associated to an edge $\gamma \in \mathcal{E}_{\bdd{a}}$ and supported on the pair of elements sharing the common edge $\gamma$ if $\bdd{a} \in \mathcal{V}_C$.

The set of unit vectors defining the directional derivative degrees of freedom at a vertex $\bdd{a} \in \mathcal{V}$ are chosen as follows:
\begin{align}
\label{eq:derivative directions}
D_{\bdd{a}} = \begin{cases}
\{ \unitvec{e}_1, \unitvec{e}_2 \} & \bdd{a} \in \mathcal{V}_I \\
\{ \unitvec{t}, \unitvec{n} \} &  \bdd{a} \in \mathcal{V}_B \\
\{ \unitvec{t}_{\gamma} : \gamma \in \mathcal{E}_{\bdd{a}} \} &  \bdd{a} \in \mathcal{V}_C
\end{cases}
\end{align}
where $\unitvec{t}$ and $\unitvec{n}$ are the unit tangent and normal vectors at a noncorner boundary vertex $\bdd{a} \in \mathcal{V}_B$ and $\unitvec{t}_{\gamma}$ denotes a unit tangent vector on an edge $\gamma \in \mathcal{E}$ as illustrated in \cref{fig:velocity example mesh}.
For a given vertex $\bdd{a} \in \mathcal{V}$ and unit vector $\unitvec{\mu} \in D_{\bdd{a}}$, the global basis function $\phi_{\bdd{a}}^{\mu}$ has support
\begin{align*}
\supp \phi_{\bdd{a}}^{\mu} = \begin{cases}
\{ K \in \mathcal{T}_{\bdd{a}} : \exists \gamma \in \mathcal{E}_K \text{ with } \unitvec{t}_{\gamma} = \pm \unitvec{\mu} \} & \bdd{a} \in \mathcal{V}_C,  \\
\mathcal{T}_{\bdd{a}} & \bdd{a} \in \mathcal{V} \setminus \mathcal{V}_C.
\end{cases}
\end{align*}
The global $C^1$ vertex functions come in pairs as follows: Given a noncorner vertex $\bdd{a} \in \mathcal{V}_I \cup \mathcal{V}_B$, let $\unitvec{\mu}_1$, $\unitvec{\mu}_2$ be unit vectors such that $D_{\bdd{a}} = \{ \unitvec{\mu}_1, \unitvec{\mu}_2 \}$ as in \cref{eq:derivative directions}, and define the basis functions by
\begin{align}
\label{eq:vertex c1 reference to physical}
\begin{bmatrix}
\phi_{\bdd{a}}^{\mu_1} \\
\phi_{\bdd{a}}^{\mu_2}
\end{bmatrix}   =
\begin{bmatrix}
\unitvec{\mu}_1 & \unitvec{\mu}_2
\end{bmatrix}^{-1} D\bdd{F}_K \begin{bmatrix}
\hat{\phi}_{i}^{(1,0)} \circ \bdd{F}_K^{-1} \\
\hat{\phi}_{i}^{(0,1)} \circ \bdd{F}_K^{-1} \\
\end{bmatrix}  \quad \text{on } K \in \mathcal{T}_{\bdd{a}},
\end{align}
where $\hat{\bdd{a}}_i = \bdd{F}_K^{-1}(\bdd{a})$. The above construction ensures that the basis functions are $C^1$ continuous at the vertex $\bdd{a}$: i.e. $\unitvec{\mu}_i \cdot \nabla \phi_{\bdd{a}}^{\mu_j}(\bdd{a}) = \delta_{ij}$. The case of a corner vertex $\bdd{a} \in \mathcal{V}_C$ is more complicated since, as mentioned above, each edge $\gamma \in \mathcal{E}_{\bdd{a}}$ contributes one independent basis function at the vertex, also defined by the expression \cref{eq:vertex c1 reference to physical}, which is supported on the edge patch $\{ K \in \mathcal{T}_{\bdd{a}} : \gamma \in \mathcal{E}_K \}$. The unit vectors $\unitvec{\mu}_1, \unitvec{\mu}_2$ in \cref{eq:vertex c1 reference to physical}  associated with such an element $K \in \mathcal{T}_{\bdd{a}}$ are taken to be the pair of unit tangent vectors on the two edges of $K$ having an endpoint at $\bdd{a}$. This means that the basis functions $\phi_{\bdd{a}}^{\mu_1}$ and $\phi_{\bdd{a}}^{\mu_2}$ are the only $C^1$ vertex functions supported on $K$.

The remaining $C^0$ vertex functions, edge functions, and interior functions are again defined to be pull-backs of the corresponding functions on the reference element in the usual way: i.e. $\phi_{\bdd{a}} = \hat{\phi}_{i} \circ \bdd{F}_{K}^{-1}$ on $K \in \mathcal{T}_{\bdd{a}}$ and $\phi_{\bdd{a}} = 0$ otherwise,
where $\hat{\bdd{a}}_i = \bdd{F}_K^{-1}(\bdd{a})$. Similarly, there are $k-3$ edge functions per edge $\gamma \in \mathcal{E}$, supported on the patch of elements containing that edge $\{ K \in \mathcal{T} : \gamma \in \mathcal{E}_K \}$, and there are $\frac{1}{2}(k-1)(k-2)$ interior functions per element $K \in \mathcal{T}$.

\subsubsection{Velocity Basis Functions with Homogeneous Boundary Conditions}

The above construction gives a basis for $V$ in the absence of essential boundary conditions. If nonhomogeneous essential boundary conditions are imposed, then the values of the following basis functions will be constrained by the boundary data:
\begin{itemize}
	
	\item the $C^0$ vertex function $\phi_{\bdd{a}}$, $\bdd{a} \in \mathcal{V} \setminus \mathcal{V}_I$ at each vertex on the domain boundary;
	
	\item the $C^1$ vertex function at each noncorner boundary vertex corresponding to the tangential derivative degree of freedom, i.e. $\phi_{\bdd{a}}^{t}$ for $\bdd{a} \in \mathcal{V}_B$;
	
	\item the pair of $C^1$ vertex functions at each corner boundary vertex corresponding to the tangential derivatives along the domain boundary edges: $\phi_{\bdd{a}}^{t_{\gamma}}$, $\phi_{\bdd{a}}^{t_{\gamma'}}$ for $\bdd{a} \in \mathcal{V}_C$ where $\gamma, \gamma' \in \mathcal{E}_{\bdd{a}} \cap \Gamma$; and
	
	\item all $k-3$ edge functions for each edge on the domain boundary.
\end{itemize}
If homogeneous essential boundary conditions are imposed, then a basis for $V_0 = V \cap H^1_0(\Omega)$ is obtained by taking the following functions:
\begin{itemize}
	\item the $C^0$ vertex function at each interior vertex, i.e. $\phi_{\bdd{a}}$, $\bdd{a} \in \mathcal{V}_I$;
	
	\item the following $C^1$ vertex functions: $\phi_{\bdd{a}}^{\mu}$, $\bdd{a} \in \mathcal{V}$, $\unitvec{\mu} \in \mathring{D}_{\bdd{a}}$ where
	\begin{align}
	\label{eq:derivative directions v0}
	\mathring{D}_{\bdd{a}} = \begin{cases}
	\{ \unitvec{e}_1, \unitvec{e}_2 \} & \bdd{a} \in \mathcal{V}_I \\
	\{ \unitvec{n} \} &  \bdd{a} \in \mathcal{V}_B \\
	\{ \unitvec{t}_{\gamma} : \gamma \in \mathcal{E}_{\bdd{a}} \cap \mathcal{E}_I \} &  \bdd{a} \in \mathcal{V}_C
	\end{cases}
	\end{align}
	and $\mathcal{E}_I$ denotes the set of interior edges; 
	
	\item all $k-3$ edge functions on each interior edge; and
	
	\item all interior functions on each element.
\end{itemize}
Condition \cref{eq:derivative directions v0} means that we keep both $C^1$ vertex functions for each interior vertex, the $C^1$ vertex function associated with the the outward normal of $\Gamma$ for each noncorner boundary vertex, and each $C^1$ vertex function corresponding to an interior edge unit tangent vector at corner vertices.

\section{Constructing the Preconditioner Using Additive Schwarz Theory}
\label{sec:asm theory}

In \cref{sec:precond and main theorem}, we constructed the stiffness matrix for the Stokes problem \cref{eq:full system matrix} using bases for the spaces $\bdd{V}_0$ and $Q$ and performed static condensation to arrive at the Schur complement system \cref{eq:Schur complement system}. In \cref{sec:schur complement variational}, it was shown that the algebraic Schur complement system \cref{eq:Schur complement system} was related to the mixed finite element problem \cref{eq:variational form schur complement} posed on the spaces $\tilde{\bdd{V}}_E \times \tilde{Q}_E$. The alert reader will have noticed a slight discrepancy in the treatment of the average pressure mode over the domain $\Omega$: in \cref{sec:basis functions mesh,sec:precond and main theorem}, the average pressure modes were included in the discretization (and it was pointed out that these modes span the kernel of the Schur complement) whereas in \cref{sec:schur complement variational}, the pressure space $\tilde{Q}_E = Q_I^{\perp} \cap L^2_0(\Omega)$ was used, which factors out the singular mode. In order to construct a preconditioner in the form \cref{eq:preconditioner form}, we formulate an Additive Schwarz Method (ASM) over the spaces $\tilde{\bdd{V}}_E \times Q_I^{\perp}$ rather than the seemingly more natural choice $\tilde{\bdd{V}}_E \times \tilde{Q}_E$ suggested by \cref{thm:schur complement variational form}.

\subsection{Pressure ASM}

We decompose the pressure space $Q_I^{\perp}$ as follows:
\begin{align}
\label{eq:QI direct sum decomposition}
Q_I^{\perp} = \bigoplus_{\substack{\bdd{a} \in \mathcal{V} \\ \omega \in \Omega_{\bdd{a}}}} \tilde{Q}_{\bdd{a},\omega} \oplus \bigoplus_{K \in \mathcal{T}} \tilde{Q}_{K},   
\end{align}
where (i) the vertex spaces $\tilde{Q}_{\bdd{a}, \omega} := \mathrm{span} \{ \tilde{\psi}_{\bdd{a}}^{\omega} \}$,  $\bdd{a} \in \mathcal{V},  \omega \in \Omega_{\bdd{a}}$ with $\tilde{\psi}_{\bdd{a}}^{\omega} := \tilde{\Pi} \psi_{\bdd{a}}^{\omega}$ are equipped with the inner product $	m_{\bdd{a}, \omega}( p, q ) := |\omega| k^{-4} p(\bdd{a}) q(\bdd{a})$, and (ii) the element average spaces $\tilde{Q}_{K} := \mathrm{span} \tilde{\psi}_K$, $K \in \mathcal{T}$, with $\tilde{\psi}_K := \tilde{\Pi} \psi_K$ are equipped with the inner product
\begin{align*}
m_{K}(p, q) := \frac{1}{|K|} \left( \int_{K} p \ d\bdd{x} \right) \left( \int_{K} q \ d\bdd{x} \right), \quad \forall p,q \in \tilde{Q}_K. 
\end{align*}
Applying the projection $\tilde{\Pi}$ to the formulae for $\psi_K$ \cref{eq:pressure average value func,eq:global pressure vertex} gives	
\begin{align*}
\tilde{\psi}_K = \begin{cases}
1 - \sum\limits_{\substack{\bdd{a} \in \mathcal{V}_K \\ \omega \supseteq K }} \tilde{\psi}_{\bdd{a}}^{\omega} & \text{on } K \\
0 & \text{otherwise},
\end{cases}
\end{align*}
where the functions $\tilde{\psi}_{\bdd{a}}^{\omega}$ are defined in (i) and we use the fact that $\tilde{\Pi}$ preserves constants. The direct sum decomposition \cref{eq:QI direct sum decomposition} means that any $q \in Q_I^{\perp}$ may be uniquely expressed in the form
\begin{align*}
q = \sum_{\substack{\bdd{a} \in \mathcal{V} \\ \omega \in \Omega_{\bdd{a}}}} q_{\bdd{a}, \omega} + \sum_{K \in \mathcal{T}} q_K,
\end{align*}
where
\begin{align*}
q_{\bdd{a}, \omega} &= q|_{\omega}(\bdd{a}) \tilde{\psi}_{\bdd{a}}^{\omega}, \quad \bdd{a} \in \mathcal{V}, \ \omega \in \Omega_{\bdd{a}}, & 
q_K &=  \left(\frac{1}{|K|} \int_{K} q \ d\bdd{x}\right) \tilde{\psi}_{K}, \quad K \in \mathcal{T}.
\end{align*}
The action of the associated ASM preconditioner on a residual $g \in L^2(\Omega)$ is given by the solution $p \in Q_I^{\perp}$ of the variational problem $\bar{m}(p, q) = (g, q)$  $\forall q \in Q_I^{\perp}$, where
\begin{align}
\label{eq:bar c definition}
\bar{m}(p, q) := \sum_{ \substack{\bdd{a} \in \mathcal{V} \\ \omega \in \Omega_{\bdd{a}} } } m_{\bdd{a}, \omega}(p_{\bdd{a},\omega}, q_{\bdd{a}, \omega}) + \sum_{K \in \mathcal{T}} m_{K}(p_K, q_K).
\end{align}
The bilinear form $\bar{m}(\cdot, \cdot)$ gives rise to a matrix preconditioner $\bar{\bdd{M}}$ for the pressure space defined by
\begin{align}
\label{eq:bilinear form to matrix pressure}
\vec{p}_e^{T} \bar{\bdd{M}} \vec{q}_e &= \bar{m}(\tilde{\Pi} p, \tilde{\Pi} q) \qquad \forall p,q \in Q.
\end{align}

\subsection{Velocity ASM}

We decompose the velocity space $\tilde{\bdd{V}}_E$ as follows:
\begin{align}
\label{eq:tilde VE direct sum decomposition}
\tilde{\bdd{V}}_E = \tilde{\bdd{V}}_{c} \oplus \bigoplus_{ \substack{ \bdd{a} \in \mathcal{V} \\ \unitvec{\mu} \in \mathring{D}_{\bdd{a}} } } \tilde{\bdd{V}}_{\bdd{a},\mu} \oplus \bigoplus_{\gamma \in \mathcal{E}_I} \tilde{\bdd{V}}_{\gamma},
\end{align}
where (i) the global $C^0$ vertex space $\tilde{\bdd{V}}_{c} := \mathrm{span}\{ \bdd{\Pi}_{\bdd{V}} (\phi_{\bdd{a}} \unitvec{e}_1), \ \bdd{\Pi}_{\bdd{V}} (\phi_{\bdd{a}} \unitvec{e}_2) : \bdd{a} \in \mathcal{V}_I \}$, (ii) the $C^1$ vertex spaces $\tilde{\bdd{V}}_{\bdd{a}, \mu} := \mathrm{span}\{ \bdd{\Pi}_{\bdd{V}} (\phi_{\bdd{a}}^{\mu} \bdd{e}_1), \ \bdd{\Pi}_{\bdd{V}}(\phi_{\bdd{a}}^{\mu} \bdd{e}_2)  \}$, $\bdd{a} \in \mathcal{V}$, $\unitvec{\mu} \in \mathring{D}_{\bdd{a}}$, and (iii) the edge spaces $\tilde{\bdd{V}}_{\gamma} := \mathrm{span}\{ \bdd{\Pi}_{\bdd{V}} (\phi_{\gamma, j} \bdd{e}_1), \bdd{\Pi}_{\bdd{V}} (\phi_{\gamma, j} \bdd{e}_2) : 1 \leq j \leq k-3 \}$, $\gamma \in \mathcal{E}_I$. Each of the velocity subspaces is equipped with the inner product $a(\cdot, \cdot)$ restricted to the appropriate space. The direct sum decomposition \cref{eq:tilde VE direct sum decomposition} means that any any $\bdd{u} \in \tilde{\bdd{V}}_E$ may be uniquely expressed in the form
\begin{align}
\label{eq:u subspace decomp}
\bdd{u} = \bdd{u}_c + \sum_{\substack{ \bdd{a} \in \mathcal{V} \\ \unitvec{\mu} \in \mathring{D}_{\bdd{a}} }} \bdd{u}_{\bdd{a}, \mu} + \sum_{\gamma \in \mathcal{E}_I} \bdd{u}_{\gamma},
\end{align}
where
\begin{align*}
\bdd{u}_c &= \sum_{\bdd{a} \in \mathcal{V}_I} \sum_{i=1}^{2} (\bdd{u} \cdot \unitvec{e}_i)(\bdd{a}) \bdd{\Pi}_{\bdd{V}} (\phi_{\bdd{a}} \unitvec{e}_i), \\
\bdd{u}_{\bdd{a}, \mu} &= \sum_{i=1}^{2} \partial_{\mu} (\bdd{u}|_{K_{\mu}} \cdot \unitvec{e}_i)(\bdd{a}) \bdd{\Pi}_{\bdd{V}} (\phi_{\bdd{a}}^{\mu} \unitvec{e}_i), \quad \bdd{a} \in \mathcal{V}, \quad \unitvec{\mu} \in \mathring{D}_{\bdd{a}}, \quad K_{\mu} \subseteq \supp \phi_{\bdd{a}}^{\mu},  \\
\intertext{and for each $\gamma \in \mathcal{E}_I$,}
\bdd{u}_{\gamma} &= \begin{cases}
\bdd{u} - \bdd{u}_c - \sum_{\substack{\bdd{a} \in \mathcal{V} \\ \unitvec{\mu} \in \mathring{D}_{\bdd{a}}}} \bdd{u}_{\bdd{a}, \mu} & \text{on } \gamma \\
\bdd{0} & \text{on the remaining edges in $\mathcal{E}$.}
\end{cases}
\end{align*}

The action of the associated ASM preconditioner on a residual $\bdd{f} \in \bdd{L}^2(\Omega)$ is given by the solution $\bdd{u} \in \tilde{\bdd{V}}_E$ of the variational problem $\bar{a}(\bdd{u}, \bdd{v}) = (\bdd{f}, \bdd{v})$ $\forall \bdd{v} \in \tilde{\bdd{V}}_E$, where
\begin{align}
\label{eq:bar a definition}
\bar{a}(\bdd{u}, \bdd{v}) = a(\bdd{u}_{c}, \bdd{v}_{c}) + \sum_{ \substack{\bdd{a} \in \mathcal{V} \\ \unitvec{\mu} \in \mathring{D}_{\bdd{a}}} } a(\bdd{u}_{\bdd{a}, \mu}, \bdd{v}_{\bdd{a}, \mu}) + \sum_{\gamma \in \mathcal{E}_I} a(\bdd{u}_{\gamma}, \bdd{v}_{\gamma}).
\end{align}
The bilinear form $\bar{a}(\cdot, \cdot)$ gives rise to a matrix preconditioner $\bar{\bdd{A}}$ for the velocity space defined by
\begin{align}
\label{eq:bilinear form to matrix velocity}
\vec{u}_E^T \bar{\bdd{A}} \vec{v}_E &= \bar{a}(\bdd{\Pi}_{\bdd{V}} \bdd{u}, \bdd{\Pi}_{\bdd{V}} \bdd{v}), \quad \forall \bdd{u},\bdd{v} \in \bdd{V}_0.
\end{align}

\subsection{The Preconditioner and Main Result}
\label{sec:proof of main result}

\begin{theorem}
	\label{thm:location of eigenvalues}
	Let $\bdd{P}$ be defined as in \cref{eq:preconditioner form} with $\bar{\bdd{A}}$ given by \cref{eq:bilinear form to matrix velocity} and $\bar{\bdd{M}}$ given by \cref{eq:bilinear form to matrix pressure}. Then, using $\bdd{P}^{-1}$ as a preconditioner for the MINRES method reduces the norm of the residual of MINRES by a factor of at least $\frac{\sqrt{\sigma} - 1}{\sqrt{\sigma} + 1}$ every two iterations, where $\sqrt{\sigma} \leq C(1 + \log^3 k)$ with $C$ independent of $k$ and $h$.
\end{theorem}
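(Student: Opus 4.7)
The plan is to pass from the algebraic eigenvalue bounds in \cref{eq:intro evals location} to variational spectral equivalences, and then to use the additive Schwarz theory to estimate $\delta,\Delta,\theta,\Theta$. By \cref{eq:matrix relations 1,eq:bilinear form to matrix velocity}, the ratio controlling $\delta,\Delta$ equals
\begin{align*}
\frac{\vec{u}_E^T \widetilde{\bdd{A}} \vec{u}_E}{\vec{u}_E^T \bar{\bdd{A}} \vec{u}_E} = \frac{a(\bdd{\Pi}_{\bdd{V}} \bdd{u}, \bdd{\Pi}_{\bdd{V}} \bdd{u})}{\bar{a}(\bdd{\Pi}_{\bdd{V}} \bdd{u}, \bdd{\Pi}_{\bdd{V}} \bdd{u})},
\end{align*}
so it suffices to prove the spectral equivalence $C_1^{-1} a(\tilde{\bdd{v}},\tilde{\bdd{v}}) \le \bar{a}(\tilde{\bdd{v}},\tilde{\bdd{v}}) \le C_2 a(\tilde{\bdd{v}},\tilde{\bdd{v}})$ for all $\tilde{\bdd{v}} \in \tilde{\bdd{V}}_E$, where the constants depend at most polylogarithmically on $k$. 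Likewise, by \cref{eq:matrix relations 2} and \cref{eq:bilinear form to matrix pressure}, the ratio controlling $\theta,\Theta$ factors through the Schur complement for the Stokes problem on $\tilde{\bdd{V}}_E\times\tilde{Q}_E$ and the form $\bar{m}(\cdot,\cdot)$.

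For the velocity equivalence, I will invoke the standard ASM framework (the Lions lemma / abstract Schwarz theory, see \cite{ToWi06}). The upper bound $\bar{a}(\tilde{\bdd{v}},\tilde{\bdd{v}}) \le C_2 a(\tilde{\bdd{v}},\tilde{\bdd{v}})$ reduces to establishing a stable subspace decomposition of $\tilde{\bdd{V}}_E$ into the coarse $C^0$-vertex space $\tilde{\bdd{V}}_c$, the $C^1$ vertex spaces $\tilde{\bdd{V}}_{\bdd{a},\mu}$, and the edge spaces $\tilde{\bdd{V}}_\gamma$, with a stability constant growing no faster than $\log^3 k$. This will be obtained by combining (i) the energy continuity of the Stokes extension \cref{eq:stokes ext equiv harmonic ext} in \cref{thm:extension velocity continuity}, which allows us to work on the element skeleton; (ii) the classical discrete Sobolev/logarithm inequalities that are at the heart of $hp$-BDDC and wire-basket style preconditioners, exploited in \cite{Ain99,letallec97,Pav99}; and (iii) the treatment of the higher-regularity vertex degrees of freedom as in \cite{AinCP19Precon,AinJia19}. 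The lower bound $\bar{a}\ge C_1^{-1} a$ follows routinely from Cauchy--Schwarz and the finite overlap (coloring) of the vertex and edge subdomains implied by \cref{eq:shape regularity}, yielding $C_1=O(1)$. The \emph{main obstacle} is the upper bound, and more specifically controlling the $C^1$ vertex degrees of freedom: the technical lemmas of \cref{sec:technical lemmas} will be used, with the cubic logarithm arising from three applications of discrete trace/inverse inequalities---once for the pressure average bookkeeping, once for a partition-of-unity splitting along edges, and once for the pointwise vertex derivative evaluation.

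For the pressure equivalence, by \cref{thm:schur complement variational form} the eigenvalues of $\widetilde{\bdd{B}}\widetilde{\bdd{A}}^{-1}\widetilde{\bdd{B}}^T$ with respect to the $L^2$ mass matrix $\widetilde{\bdd{M}}$ on $Q_I^\perp$ lie in $[\beta^2,1]\cup\{0\}$, with $\beta$ independent of $h,k$. Thus it remains to prove a spectral equivalence $c_1 (q,q)_{L^2} \le \bar{m}(\tilde{\Pi} q,\tilde{\Pi} q) \le c_2 (q,q)_{L^2}$ on $Q_I^\perp$. Here I expect only constants (no logarithmic factors), because $\bar{m}$ is a diagonal scaling by the vertex-weight $|\omega|k^{-4}$ and element averages; the scaling $|\omega|k^{-4}$ matches the $L^2$ mass of a Bernstein-like vertex bump function on a patch of size $|\omega|$, and the element-average block is exactly the $L^2$ norm of the constant mode. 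This reduces to a local, element-by-element computation on the reference triangle of the Gram matrix of $\{\tilde\psi_{\bdd{a}}^\omega,\tilde\psi_K\}$ in the $L^2$ inner product versus $\bar{m}$; the bounded condition number of this $4\times 4$ (per element) system, combined with the finite overlap of the patches, gives the required equivalence with constants independent of $h,k$.

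Combining these two equivalences, we obtain $\delta^{-1}\Delta, \theta^{-1}\Theta = O(\log^3 k)$, and \cref{eq:intro evals location} together with the standard two-step MINRES convergence estimate \cite[Thm.~4.14]{Elman14} gives
\begin{align*}
\sqrt{\sigma} := \sqrt{\frac{\max\{|\lambda|:\lambda\ne 0\}}{\min\{|\lambda|:\lambda\ne 0\}}} \le C(1+\log^3 k),
\end{align*}
and the stated reduction factor $(\sqrt\sigma-1)/(\sqrt\sigma+1)$ per two iterations. The only $h$-independence that must be tracked throughout is that every constant arising from the local trace/extension estimates is purely local under the shape-regularity \cref{eq:shape regularity}, which is guaranteed by using reference-element estimates and pull-backs.
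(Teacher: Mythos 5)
Your proposal follows essentially the same route as the paper: reduce \cref{eq:intro evals location} to the two spectral equivalences collected in \cref{thm:spectral equivalences} (velocity ASM with a $\beta^{-2}(1+\log^3 k)$ stable-decomposition constant, pressure ASM with $O(1)$ constants), combine with the inf-sup bound \cref{eq:inf-sup global boundary spaces} via \cref{thm:schur complement variational form}, and invoke \cite[Theorem 4.14]{Elman14}. One small correction to your heuristic accounting: all three logarithms arise in the velocity decomposition (one from the discrete Sobolev bound on the $C^0$ vertex values, two from the $H^{1/2}_{00}$ edge cut-off estimate), while the $C^1$ vertex derivative terms and the pressure equivalence are log-free, as you in fact anticipate for the latter.
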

\begin{proof}
	Thanks to \cref{thm:spectral equivalences} and the matrix correspondences \cref{eq:bilinear form to matrix pressure,eq:bilinear form to matrix velocity}, there holds
	\begin{align*}
	\left[ C_2 \beta^{-2} (1 + \log^3 k) \right]^{-1} \bar{\bdd{A}} \leq \widetilde{\bdd{A}} \leq C_2 \bar{\bdd{A}} \quad \text{and} \quad C_1^{-1} \bar{\bdd{M}} \leq \widetilde{\bdd{M}} \leq C_1 \bar{\bdd{M}},
	\end{align*} 
	where $\widetilde{\bdd{M}}$ is the pressure mass matrix for the space $Q_I^{\perp}$ and $\bdd{A} \leq \bdd{B}$ means that $\bdd{B} - \bdd{A}$ is positive semidefinite. Additionally, the inf-sup condition for the spaces $\tilde{\bdd{V}}_E \times \tilde{Q}_E$ \cref{eq:inf-sup global boundary spaces} and the boundedness of the bilinear form $b(\cdot, \cdot)$ can be expressed in matrix form using \cref{eq:matrix relations 2} and the same arguments in \cite[Theorem 3.22]{Elman14} to arrive at
	\begin{align*}
	{\beta}^2 \leq \frac{ \vec{q}_{e}^{T} \widetilde{\bdd{B}} \widetilde{\bdd{A}}^{-1} \widetilde{\bdd{B}}^{T} \vec{q}_{e} }{ \vec{q}_{e}^{T} \widetilde{\bdd{M}} \vec{q}_{e} } \leq 1, \quad \forall \tilde{Q}_E \ni q =  \vec{q}_e^{T} \vec{\psi}_{e},
	\end{align*}
	where $\beta$ is the discrete inf-sup constant in \cref{eq:inf-sup global spaces}. Thus, \cref{eq:intro evals location} holds with $\delta = \beta^2 [C_2 (1 + \log^3 k)]^{-1} $, $\Delta = C_2$, $\theta = \beta^2 C_1^{-1}$, and $\Delta = C_1$.

	Let $\vec{r}_{n}$ denote the residual on the $n$-th iteration of MINRES with the preconditioner $\bdd{P}^{-1}$. Applying  \cite[Theorem 4.14]{Elman14} and using the fact that the inf-sup constant $\beta$ is bounded below uniformly in $k$ and $h$ gives
	\begin{align}
	\label{eq:residual norm decrease}
	\|\vec{r}_{2n}\|_{\bdd{P}^{-1}} \leq 2\left( \frac{\sqrt{\sigma} - 1}{\sqrt{\sigma} + 1}  \right)^{n} \| \vec{r}_{0} \|_{\bdd{P}^{-1}},
	\end{align}
	where  $\|\vec{r}\|_{\bdd{P}^{-1}}^2 := \vec{r}^{T} \bdd{P}^{-1} \vec{r}$ and $\sqrt{\sigma} \leq C(1 + \log^3 k)$
	with $C$ independent of $k$ and $h$. Since all norms on finite dimension vector spaces are equivalent, \cref{eq:residual norm decrease} holds for any choice of norm at the expense of replacing ``2" by an appropriate constant depending on the choice of norm, which  completes the proof of \cref{thm:location of eigenvalues}. 
\end{proof}
\cref{thm:location of eigenvalues} shows that the performance of the preconditioner deteriorates at most as $\log^3 k$ as the polynomial order is increased, but remains bounded as the mesh is refined provided the shape regularity assumption \cref{eq:shape regularity} is satisfied.

\subsection{Implementation and Cost Analysis of the Preconditioner}

To aid in the implementation and cost analysis of computing the actions of $\bar{\bdd{A}}^{-1}$ and $\bar{\bdd{M}}^{-1}$, we assume, for convenience, the interface degrees of freedom are ordered as follows:
\begin{enumerate}
	\item[(i)] velocity $C^0$ vertex degrees of freedom,
	\item[(ii)] velocity $C^1$ vertex degrees of freedom,
	\item[(iii)] velocity edge degrees of freedom, grouped according to edge,
	\item[(iv)] pressure vertex degrees of freedom,
	\item[(v)] pressure average value degrees of freedom.
\end{enumerate}
This ordering induces a block structure in the matrix $\widetilde{\bdd{A}}$ in which the diagonal subblocks are: $\widetilde{\bdd{A}}_c$, corresponding to the global interaction among \textit{all} the global $C^0$ vertex functions; $\widetilde{\bdd{A}}_{\bdd{a}, \mu}$, the block-diagonal entry corresponding to the $C^1$ vertex functions $\{\phi_{\bdd{a}}^{\mu}\unitvec{e}_1, \phi_{\bdd{a}}^{\mu}\unitvec{e}_2\}$; whilst $\widetilde{\bdd{A}}_{\gamma}$ corresponds to the interactions among the edge functions associated to $\gamma$. The load vectors can be similarly split into subvectors corresponding to the same groupings of degrees of freedom. The block diagonal structure of $\bdd{P}$ is then exploited to compute the action of $\bdd{P}^{-1}$ on a pair of vectors $\vec{f}$, $\vec{g}$ efficiently or in parallel, as described in \cref{alg:matrix preconditioner}.

\begin{algorithm}[ht]
	\caption{Action of Preconditioner}
	\label{alg:matrix preconditioner}
	\begin{algorithmic}[1]
		\Require{ $\widetilde{\bdd{A}}$, $\vec{f}$, $\vec{g}$}
		\Function{}{}
		
		\State{ $\vec{u}_{c} = \widetilde{\bdd{A}}_{c}^{-1} \vec{f}_{c}$  } \Comment{Global velocity $C^0$ vertex function solve}
		
		\For{ $\bdd{a} \in \mathcal{V}, \unitvec{\mu} \in \mathring{D}_{\bdd{a}}$, } \Comment{Block diagonal velocity $C^1$ vertex function solve}
		\State{ $u_{\bdd{a}, \mu} = \widetilde{\bdd{A}}_{\bdd{a}, \mu}^{-1} \vec{f}_{\bdd{a}, \mu}$ }
		\EndFor
		
		\For{ $\gamma \in \mathcal{E}_I$ } \Comment{Block diagonal velocity edge solve}
		\State{ $\vec{u}_{\gamma} = \widetilde{\bdd{A}}_{\gamma}^{-1} \vec{{f}}_{\gamma}$ }
		\EndFor
		
		\For{ $\bdd{a} \in \mathcal{V}$, $\omega \in \Omega_{\bdd{a}}$ }
		\Comment{Diagonal pressure $C^0$ vertex function solve}
		\State{ $p_{\bdd{a}, \omega} = |\omega|^{-1} k^4 {g}_{\bdd{a}, \omega} $ }
		\EndFor
		
		\For{ $K \in \mathcal{T}$ }
		\Comment{ Diagonal pressure average value solve }
		\State{ $p_{K} = |K|^{-1} {g}_{K}$ }
		\EndFor
		
		\State{\Return{ $\vec{u}_{c}$,  $(\vec{u}_{\bdd{a}, \mu})_{\bdd{a}, \mu}$, $(\vec{u}_{\gamma})_{\gamma}$, $(p_{\bdd{a}, \omega})_{\bdd{a}, \omega}$, $(p_K)_K$} }
		\Comment{Return degrees of freedom}
		
		\EndFunction
	\end{algorithmic}
\end{algorithm}

The cost of computing the action of $\bdd{P}^{-1}$ using \cref{alg:matrix preconditioner} comprises of two parts: one-time setup costs and recurring costs associated with each application of \cref{alg:matrix preconditioner}. The setup cost is dominated by eliminating the interior degrees of freedom on each element, which takes $\mathcal{O}(|\mathcal{T}| k^{6})$ operations needed for the subassembly of the Schur complement. The matrices $\bdd{A}_{c}$, $\bdd{A}_{\bdd{a}, \mu}$, $\bdd{a} \in \mathcal{V}$, $\unitvec{\mu} \in \mathring{D}_{\bdd{a}}$, and $\bdd{A}_{\gamma}$, $\gamma \in \mathcal{E}$, need only be factored once at a cost of $\mathcal{O}(|\mathcal{V}|^3 + |\mathcal{E}| k^3)$ operations, giving an overall setup cost of $
\mathcal{O}( |\mathcal{T}|k^6 + |\mathcal{E}|k^3 + |\mathcal{V}|^3  )$.

We now turn to the cost associated  with each application of \cref{alg:matrix preconditioner}. 
Line 2 of \cref{alg:matrix preconditioner} entails the solution of the linear system $\bdd{A}_c$ involving all of the $C^0$ vertex functions which, thanks to the prefactorisation of $\bdd{A}_c$, costs $\mathcal{O}(|\mathcal{V}|^2)$ operations per solve. Lines 3-5 require the solution of a 2x2 matrix on the velocity $C^1$ vertex functions for each vertex $\bdd{a} \in \mathcal{V}$ and derivative degree of freedom $\unitvec{\mu} \in \mathring{D}_{\bdd{a}}$ at a cost of $\mathcal{O}(|\mathcal{V}|)$ operations. Lines 6-8 entail a block diagonal solve over each of the edges which, again thanks to the prefactorisation of $\bdd{A}_{\gamma}$, $\gamma \in \mathcal{E}$, can be applied using $\mathcal{O}( |\mathcal{E}| k^2 )$ operations. Lines 9-11 require $\mathcal{O}(|\mathcal{V}|)$ operations and lines 12-14 require $\mathcal{O}(|\mathcal{T}|)$ operations by analogous arguments. In summary, the overall cost per application of \cref{alg:matrix preconditioner} is $\mathcal{O}( |\mathcal{E}| k^2 + |\mathcal{V}|^2 + |\mathcal{T}| )$, which is comparable to nonoverlapping domain decomposition methods for second order elliptic problems \cite{ToWi06}.

\section{Numerical Examples}
\label{sec:numerics}

\tikzset{boximg/.style={remember picture,red,thick,draw,inner sep=0pt,outer sep=0pt}}

We illustrate the performance of the preconditioner described in \cref{sec:asm theory} in two numerical examples. 

\subsection{Moffatt Eddies}

In the first example, we revisit the Moffatt problem \cite{Moffatt64} considered in \cite{AinCP19StokesII}, in which the domain $\Omega$ is the wedge with a prescribed parabolic flow profile on the top part of the boundary and no flow on the remainder of the boundary:
\begin{align*}
\bdd{u}(x, 0) = \begin{bmatrix}
1-x^2 \\ 0
\end{bmatrix}, \ -1 \leq x \leq 1, \quad \text{and} \quad \bdd{u}=\bdd{0} \text{ on } \Gamma \setminus (-1,1) \times \{0\}.
\end{align*}
The problem is approximated using a pure $p$-version finite element scheme on the fixed mesh shown in \cref{fig:moffatt mesh}. The results in \cite{AinCP19StokesII} show that the $k=13$ solution resolves four to five eddies, equivalent to a $10^{13}$ range of scales.

Let $\lambda_{min}^{\pm}$ and $\lambda_{max}^{\pm}$ denote the extremal eigenvalues of $\bdd{P}^{-1} \bdd{S}$ so that
\begin{align}
\label{eq:extremal evals def}
\sigma(\bdd{P}^{-1} \bdd{S}) \subseteq [ -\lambda_{max}^{-}, -\lambda_{min}^{-} ] \cup \{ 0 \} \cup [ \lambda_{min}^{+}, \lambda_{max}^{+} ].
\end{align}
According to \cref{thm:location of eigenvalues}, $\lambda_{max}^{\pm} \leq C$ and $\lambda_{min}^{\pm} \geq C(1 + \log^3 k)^{-1}$ with constant $C$ independent of $k$ and $h$. \cref{fig:wedge evals} displays the actual values of the extreme eigenvalues. In agreement with theory, $\lambda_{max}^{\pm}$ is uniformly bounded in $k$ and $\lambda_{min}^{+} \geq C(1 + \log^3 k)^{-1}$. However, $\lambda_{min}^{-}$ appears to remain uniformly bounded in $k$, which would mean that, in practice, the contraction factor in \cref{thm:location of eigenvalues} is pessimistic. The residual history for $k \in \{4, 7, 10, 13\}$ for the preconditioned MINRES solver are displayed in \cref{fig:wedge resids random}. The starting vector is taken to be $\vec{x} + \vec{\epsilon}$, where $\vec{x}$ is the true solution of the Schur complement system \cref{eq:Schur complement system} and $\vec{\epsilon}$ is a random perturbation with entries uniformly distributed in $(-1, 1)$. Here, and in the remaining examples, the relative residual is given by $\sqrt{(\vec{r}^T \bdd{P}^{-1} \vec{r})/(\vec{r}_0^T \bdd{P}^{-1} \vec{r}_0)}$, where $\vec{r}_0$ is the initial residual vector, and MINRES is terminated when the relative residual is smaller than $10^{-8}$. It is observed that, as the polynomial order is raised, the iteration counts grow modestly consistent with the results in \cref{thm:location of eigenvalues}. 

\begin{figure}[htb]
	\centering
	\begin{subfigure}[t]{0.32\linewidth}
		\centering
		\includegraphics[width=\linewidth]{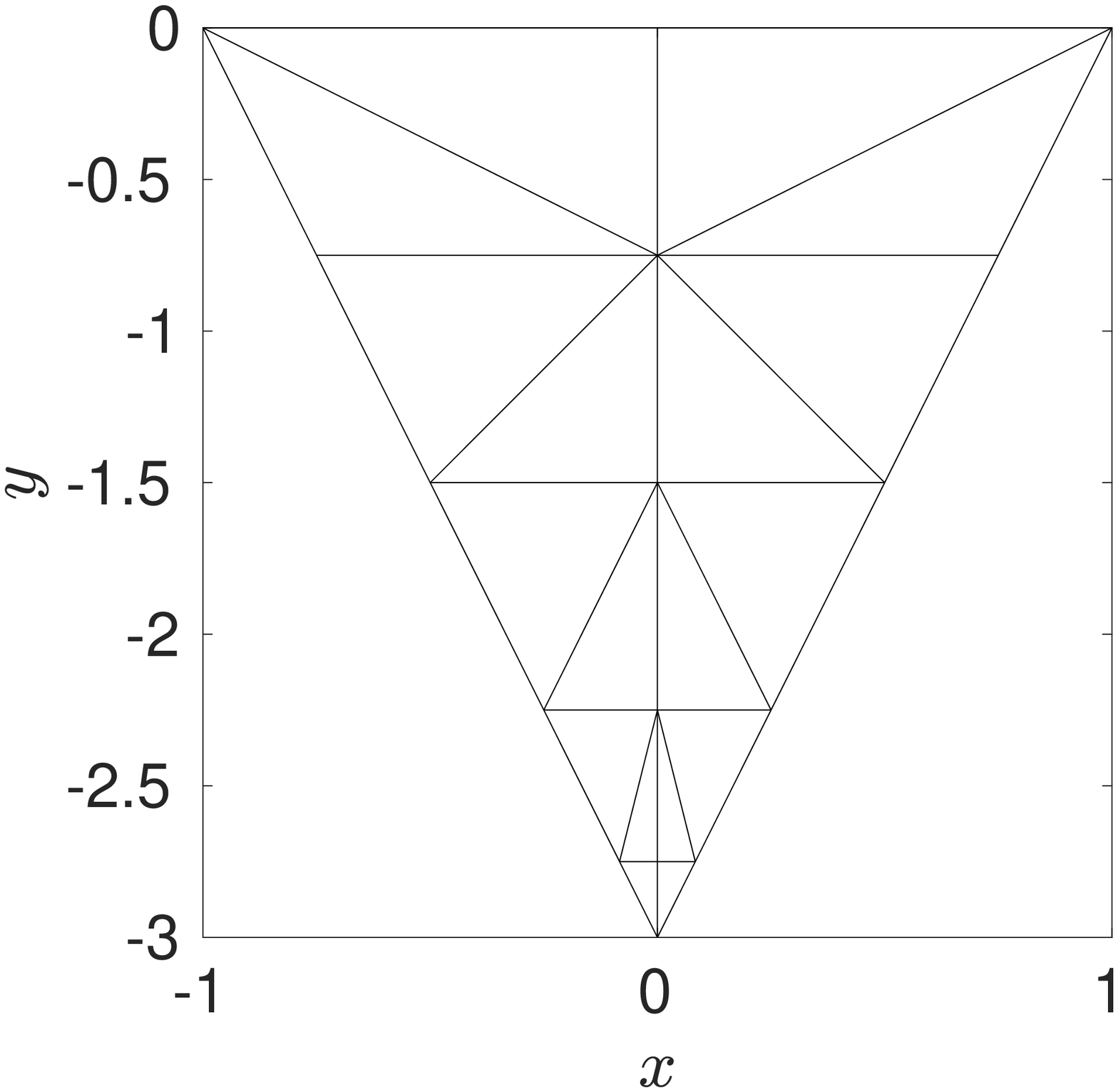}
		\caption{}
		\label{fig:moffatt mesh}
	\end{subfigure}
	\hfill
	\begin{subfigure}[t]{0.32\linewidth}
		\centering
		\includegraphics[width=\linewidth]{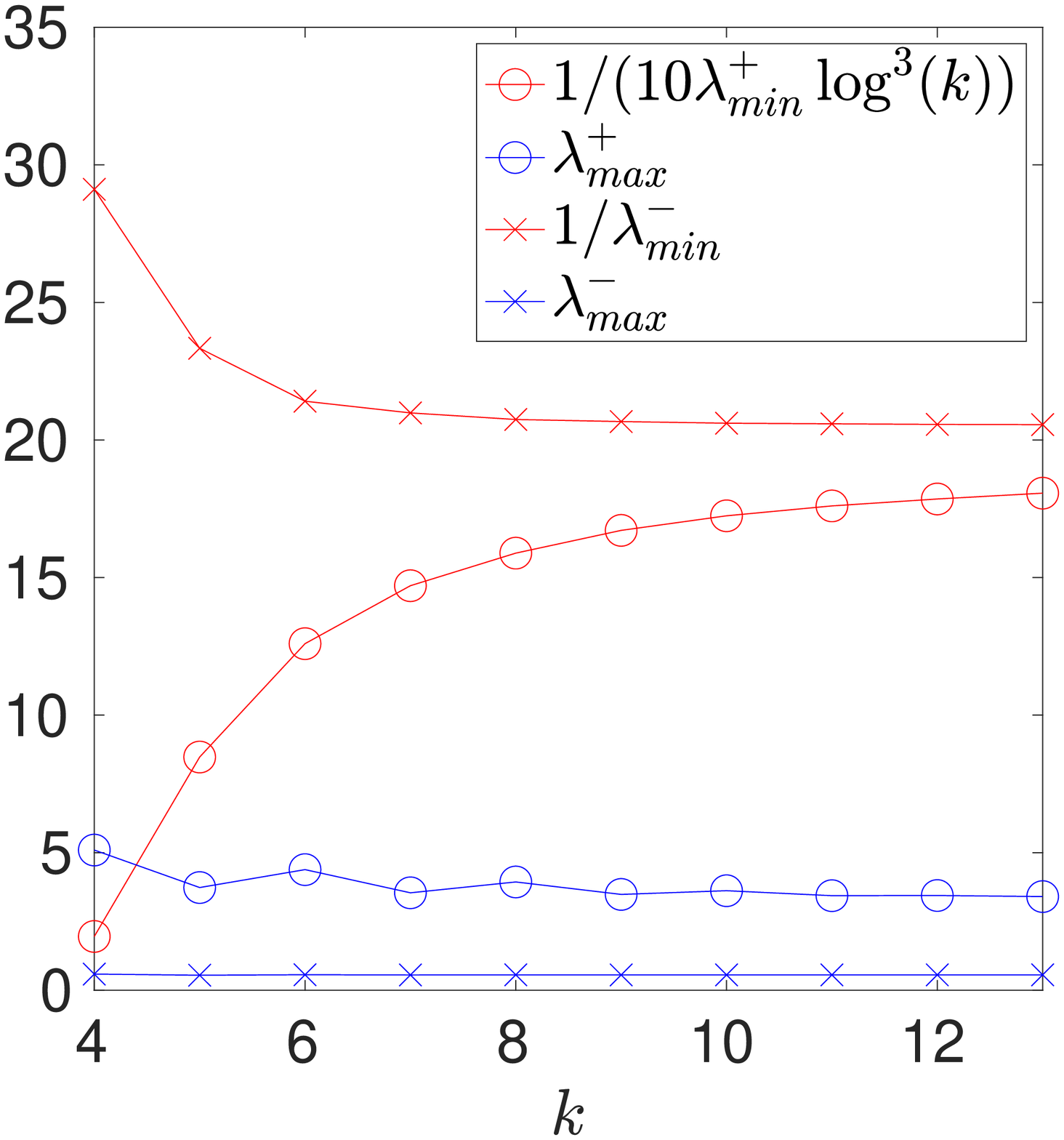}
		\caption{}
		\label{fig:wedge evals}
	\end{subfigure}
	\hfill
	\begin{subfigure}[t]{0.32\linewidth}
		\centering
		\includegraphics[width=\linewidth]{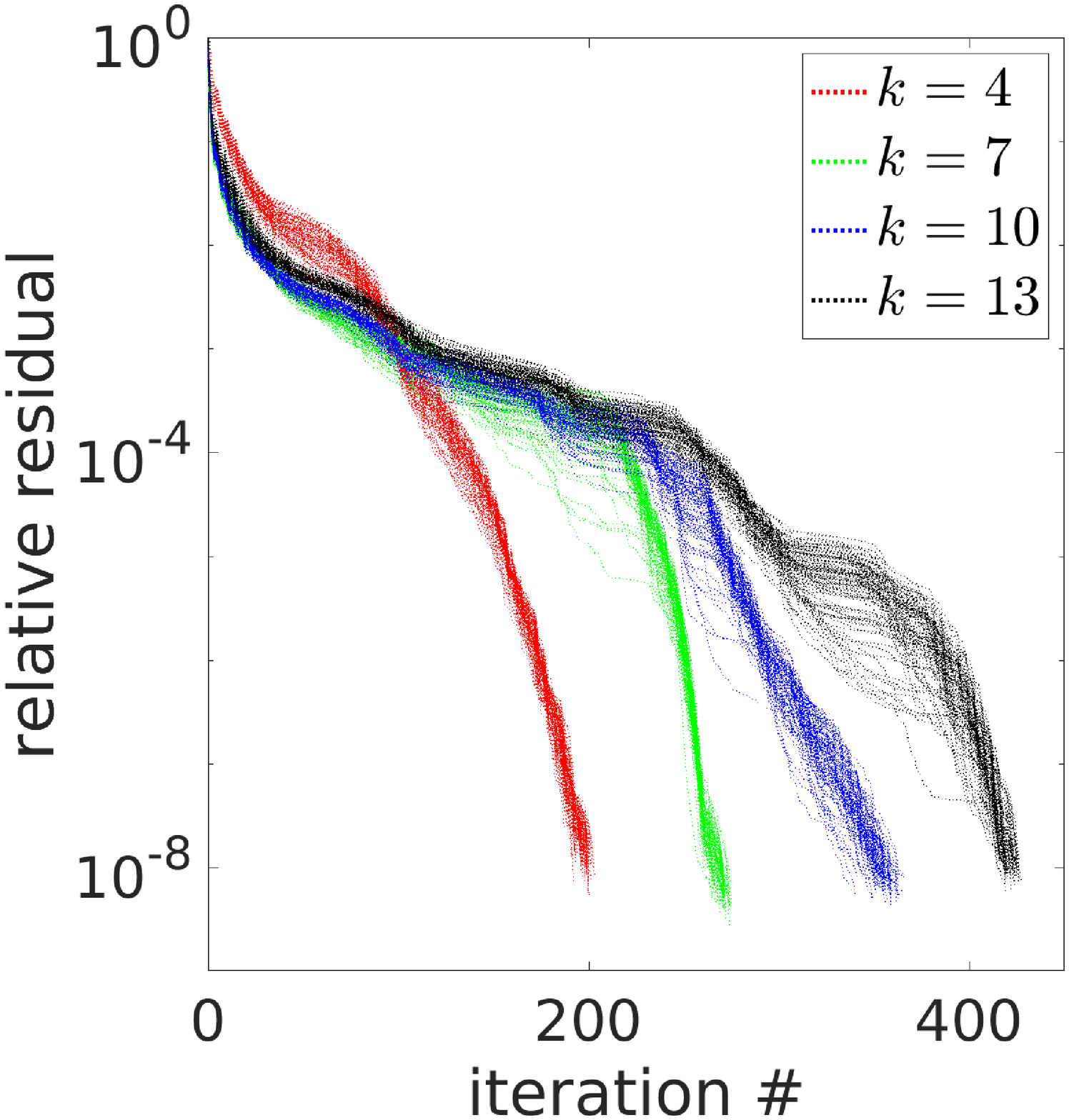}
		\caption{}
		\label{fig:wedge resids random}
	\end{subfigure}
	\caption{(a) 18 element mesh, (b) extremal eigenvalues of $\bdd{P}^{-1} \bdd{S}$, and (c) MINRES convergence history with $k\in \{4, 7, 10, 13\}$, stopping tolerance $10^{-8}$ for a sequence of random initial iterates for Moffatt eddies problem. $1/\lambda_{min}^{+}$ grows as $\log^3 k$ while the other extreme eigenvalues remain bounded as the polynomial degree $k$ is increased.}
\end{figure}

\subsection{T-shaped Domain}

	In the next example, we consider the T-shaped domain example \cite{Ain02} where $\bdd{f} \equiv \bdd{0}$ and boundary conditions are parabolic flow profile on the leftmost and rightmost boundaries of the domain and no flow on the remainder of the boundary:
	\begin{align*}
	\bdd{u}\left(\pm \frac{3}{2}, y\right) &= \begin{bmatrix}
	y(1-y) \\
	0
	\end{bmatrix}, \ 0 \leq y \leq 1, \quad \text{and} \quad \bdd{u} = \bdd{0} \text{ on } \Gamma \setminus \left\{\pm \frac{3}{2} \right\} \times (0,1).
	\end{align*}
	The sequence of meshes is shown in \cref{fig:tshape geo meshes}, in which the elements are geometrically graded and which were proved to give exponential convergence of the finite element solution \cite[\S 7.2]{AinCP19StokesII}. The mesh in \cref{fig:tshape geo mesh1} consists of one layer of elements around the re-entrant corner and the most bottom corners, with a grading factor of $\sigma = 0.08$. We then refine the mesh by successively adding layers of elements such that the innermost layer of elements has a diameter proportional to $\sigma^n$, where $n$ is the number of refinements. For example, the mesh corresponding to three levels is shown in \cref{fig:tshape geo mesh3,fig:tshape geo mesh3 zoom}.
	Observe that, once a mesh contains two or more layers, the shape regularity constant $\kappa$ \cref{eq:shape regularity} changes from 0.1695 to 0.0829 due to the presence of ``needle" elements near the corners. In particular, several estimates in the analysis depend on $\kappa$, and thus we would expect the performance of the preconditioner to be worse for $n \geq 2$ than for $n=1$.

	As with the previous example, the extremal eigenvalues \cref{eq:extremal evals def}, displayed in \cref{fig:tshape evals}, remain bounded independently of the number of levels of geometric refinement, whilst $1/\lambda_{min}^{+}$ increases by a factor of roughly 10 after one level of refinement due to the change in shape regularity mentioned above. This value is an order of magnitude greater than the value of $1/\lambda_{min}^{+}$ observed for the Moffatt ($\kappa = 0.1508$) example and accounts for the increase of the resulting iteration counts observed in the residual histories for $k \in \{4, 7, 10, 13\}$ in \cref{fig:tshape random resids}. Thus, as one might expect, the preconditioner $\bdd{P}^{-1}$ is less effective on meshes containing high aspect ratio elements owing to the fact that the inf-sup constants and inequalities employed in \cref{sec:technical lemmas} all depend on the shape regularity constant appearing in \cref{eq:shape regularity}. Nevertheless, similar to the behavior observed in the previous example, for each fixed $n$, the iteration counts grow modestly in $k$. For each fixed $k$, the iteration counts are bounded in $n$, and remain virtually unchanged for $n \geq 3$.

\begin{figure}[ht]
	\centering
	\begin{subfigure}[t]{0.32\linewidth}
		\centering
		\includegraphics[width=\linewidth]{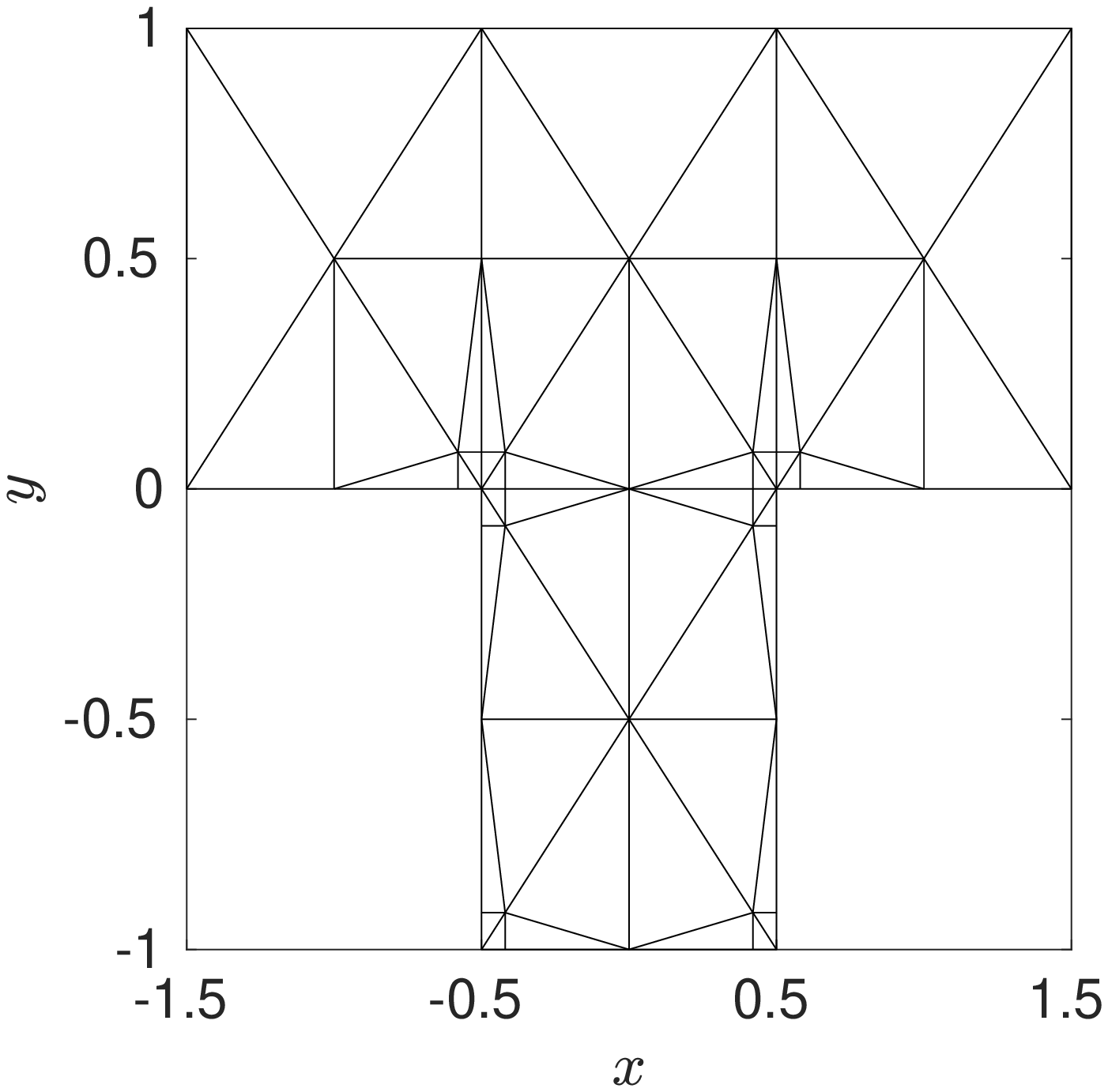}
		\caption{}
		\label{fig:tshape geo mesh1}
	\end{subfigure}
	\hfill
	\begin{subfigure}[t]{0.32\linewidth}
		\centering
		\includegraphics[width=\linewidth]{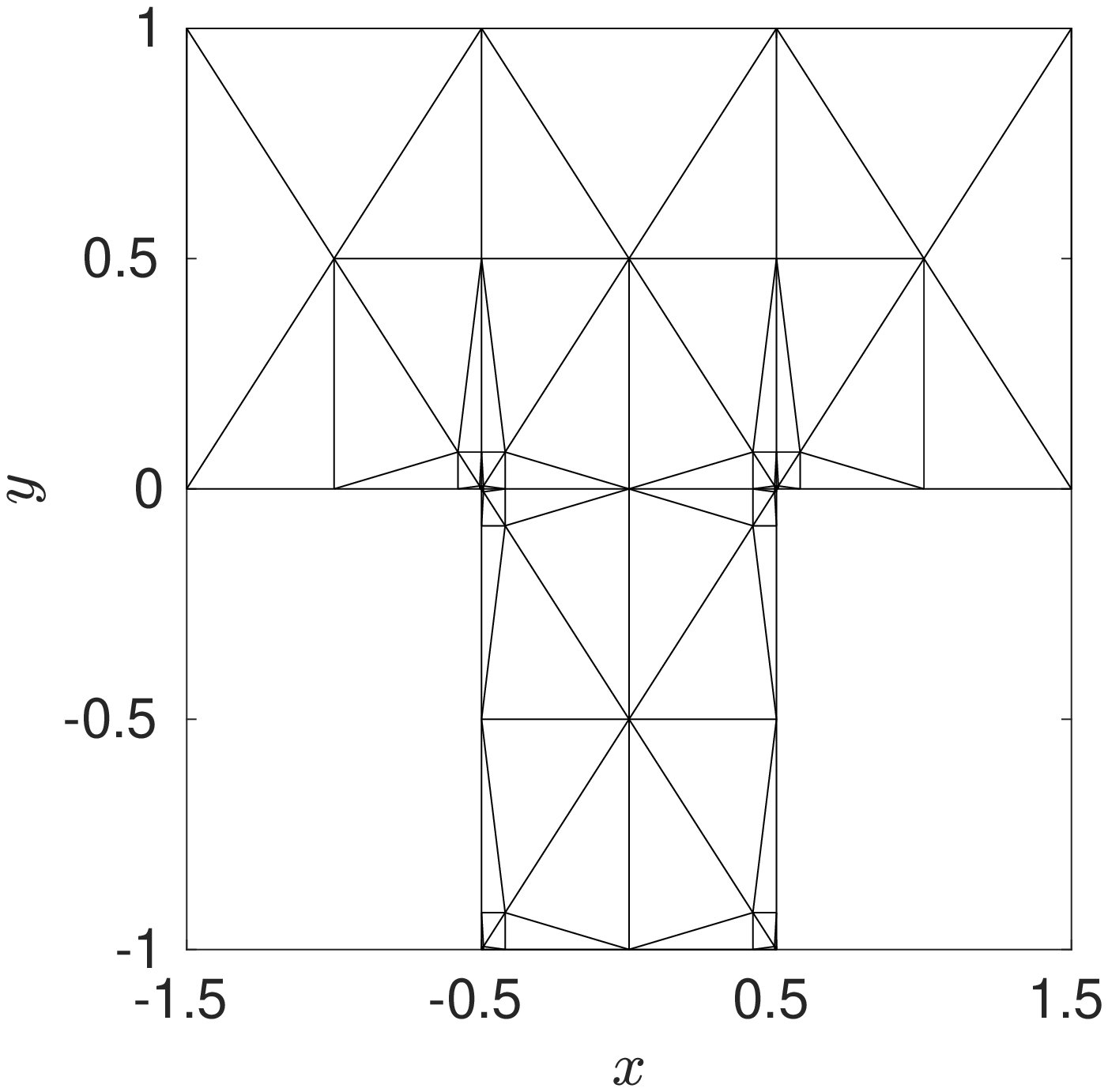}
		\caption{}
		\label{fig:tshape geo mesh3}
	\end{subfigure}
	\hfill
	\begin{subfigure}[t]{0.32\linewidth}
		\centering
		\includegraphics[width=\linewidth]{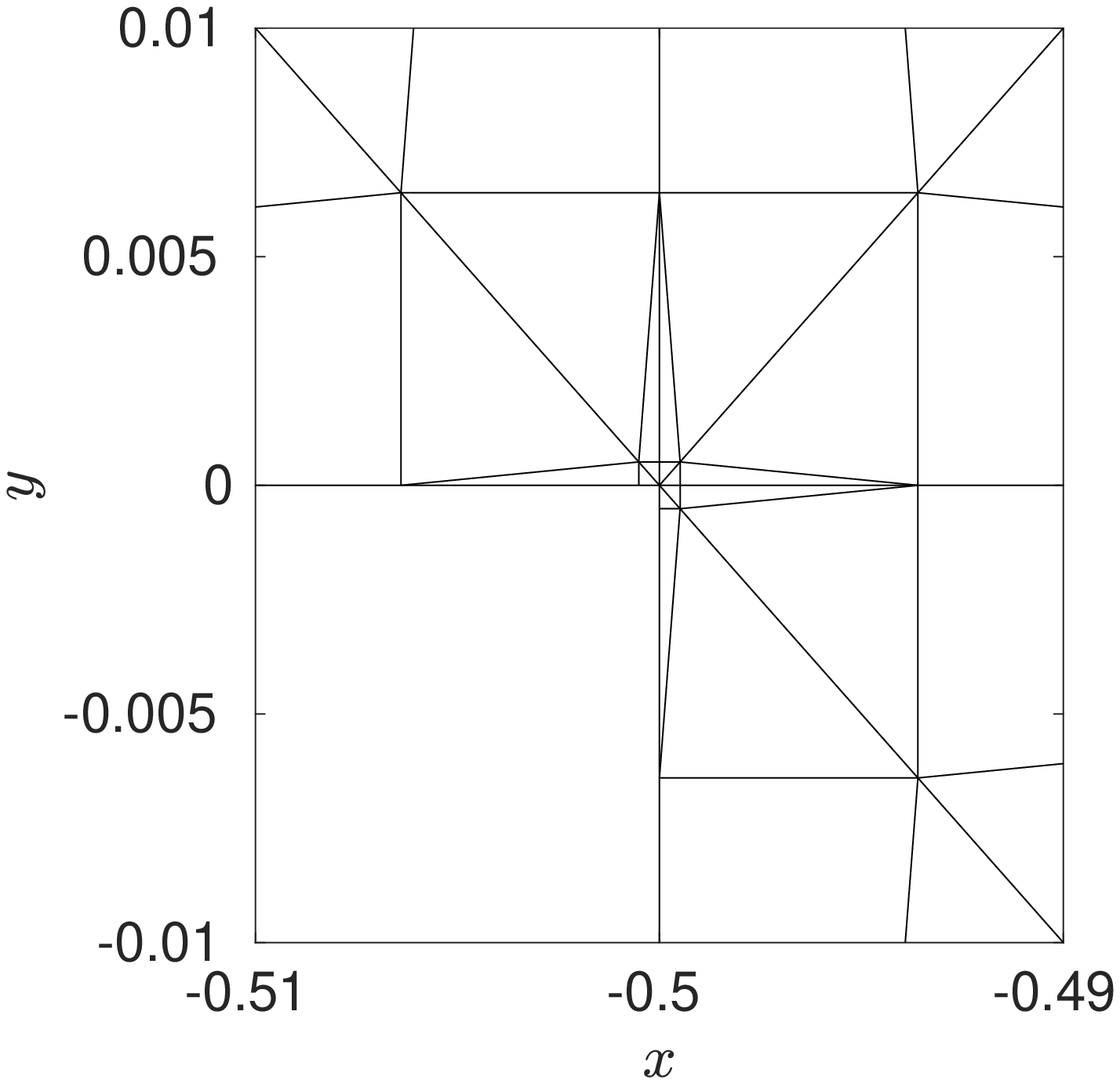}
		\caption{}
		\label{fig:tshape geo mesh3 zoom}
	\end{subfigure}
	\caption{(a) Mesh with $n=1$ layer of elements, (b) Mesh with $n=3$ layers of elements, and (c) Zoom on re-entrant corner of mesh with $n=3$ layers of elements.}
	\label{fig:tshape geo meshes}
\end{figure}

\begin{figure}[ht]
	\centering
	\begin{subfigure}{0.32\linewidth}
		\centering
		\includegraphics[width=\linewidth]{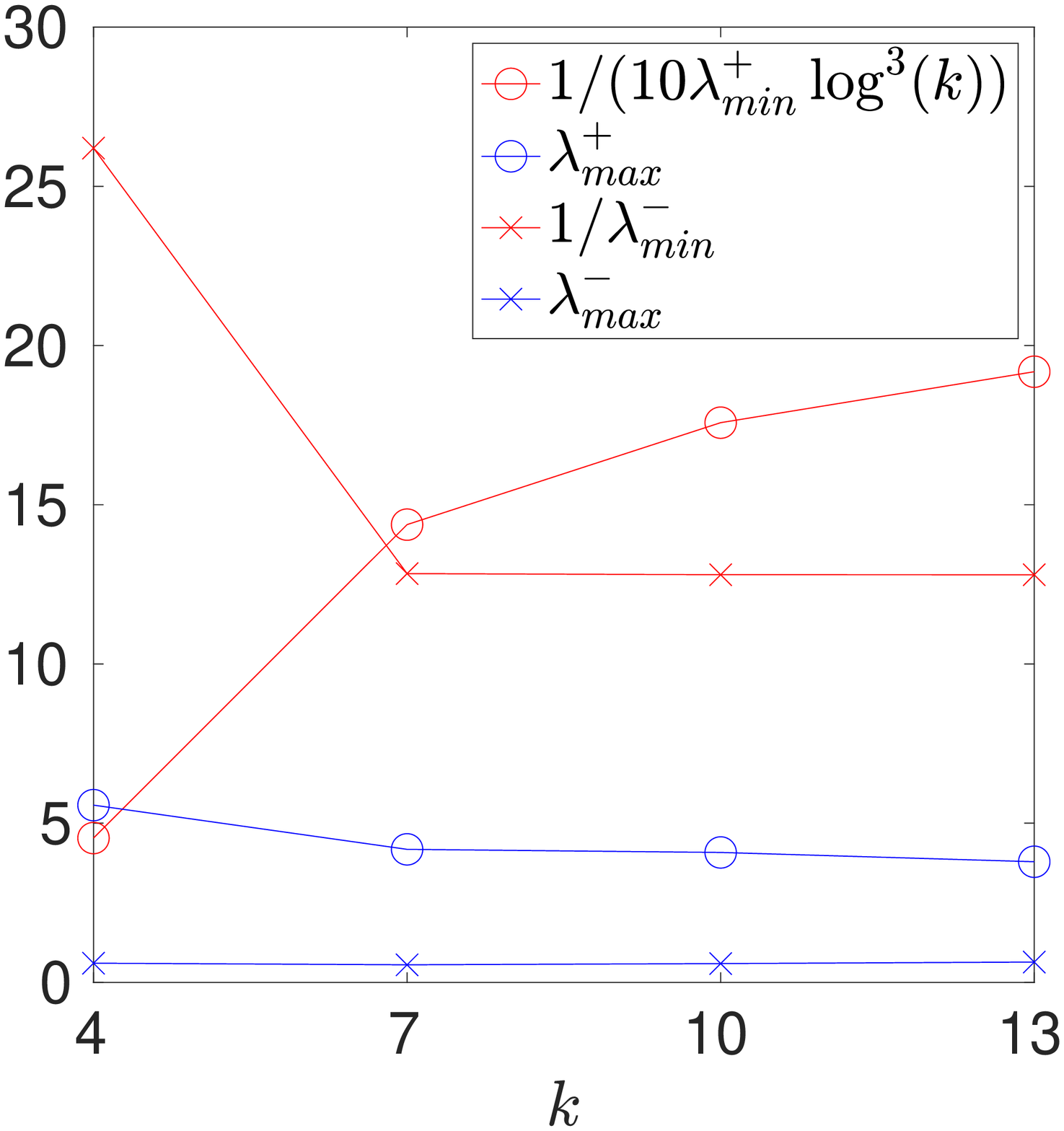}
		\caption{}
		\label{fig:tshape evals n1}
	\end{subfigure}
	\hfill
	\begin{subfigure}{0.32\linewidth}
		\centering
		\includegraphics[width=\linewidth]{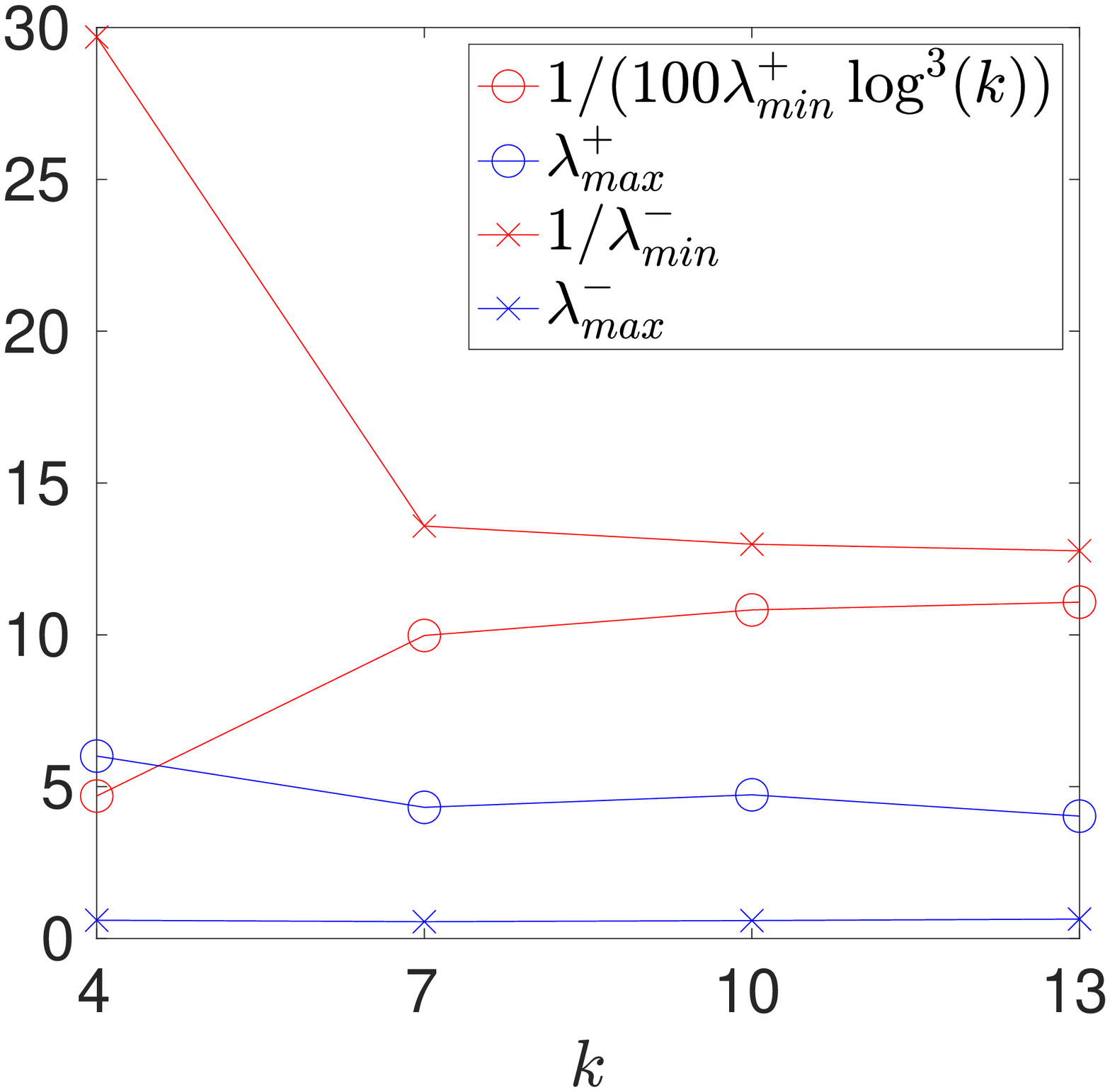}
		\caption{}
		\label{fig:tshape evals n2}
	\end{subfigure} 
	\hfill
	\begin{subfigure}{0.32\linewidth}
		\centering
		\includegraphics[width=\linewidth]{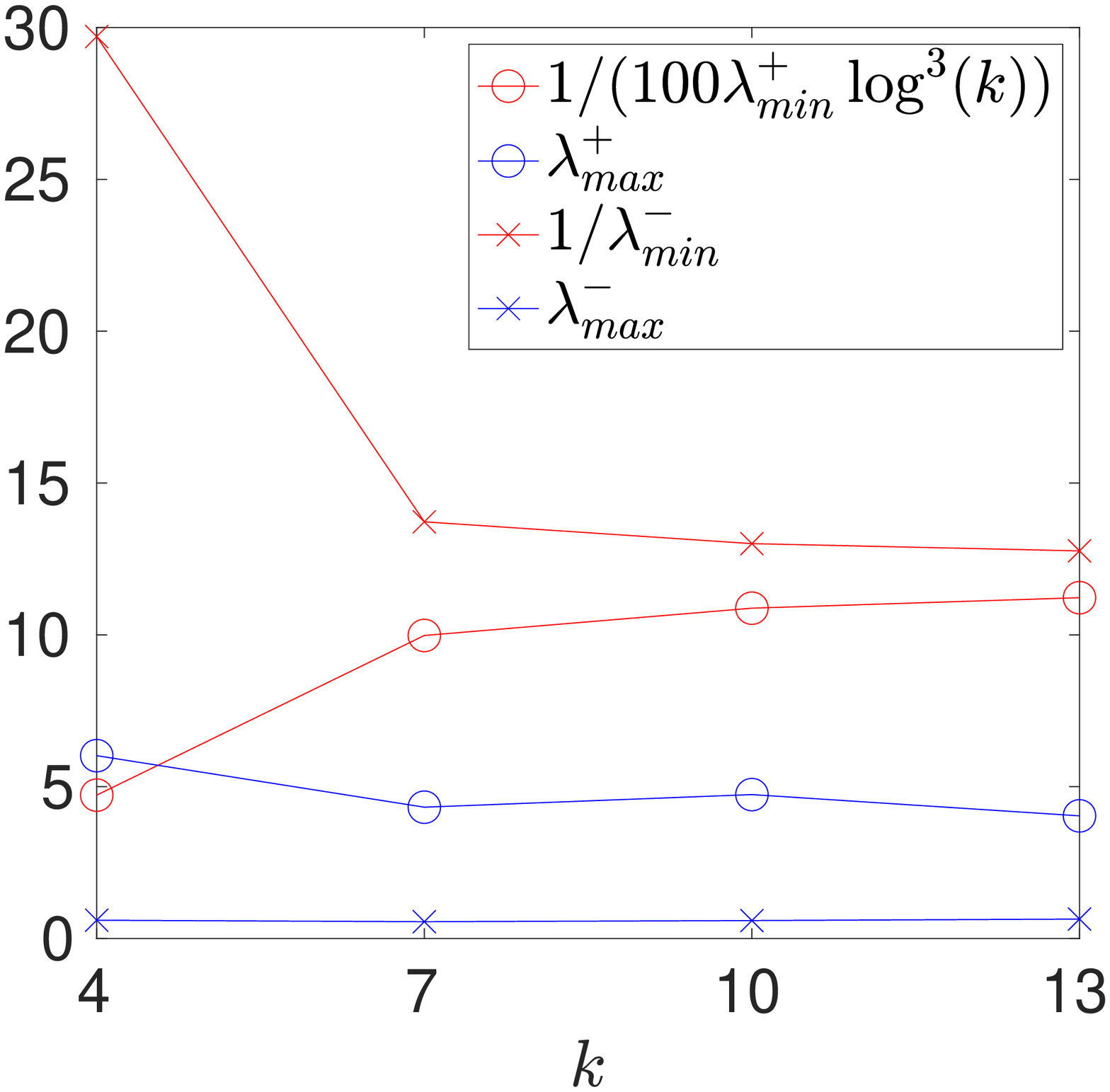}
		\caption{}
		\label{fig:tshape evals n3}
	\end{subfigure}
	\caption{Extremal eigenvalues of $\bdd{P}^{-1} \bdd{S}$ for the T-shape problem with (a) $n=1$, (b) $n = 2$, and  (c) $n=3$ layers of geometrically graded elements at the corners. All of the extreme eigenvalues are uniformly bounded in $n$ for each fixed $k$. In addition, the introduction of small-angle ``needle" elements for $n \geq 2$ greatly increases $1/\lambda_{min}^{+}$.}
	\label{fig:tshape evals}
\end{figure}

\begin{figure}[ht]
	\centering
	\begin{subfigure}{0.32\linewidth}
		\centering
		\includegraphics[width=\linewidth]{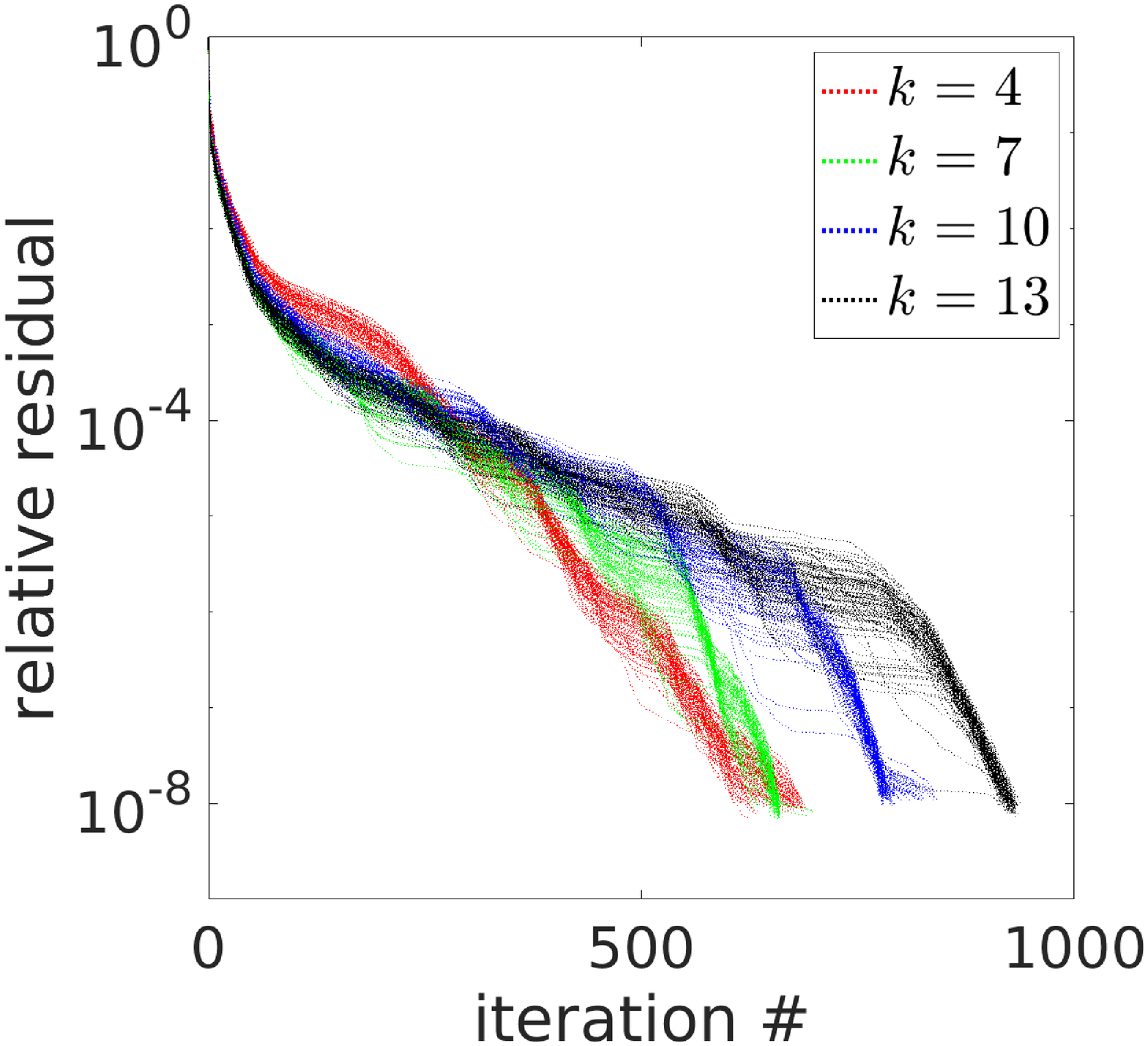}
		\caption{}
		\label{fig:tshape random resid n1}
	\end{subfigure}
	\hfill
	\begin{subfigure}{0.32\linewidth}
		\centering
		\includegraphics[width=\linewidth]{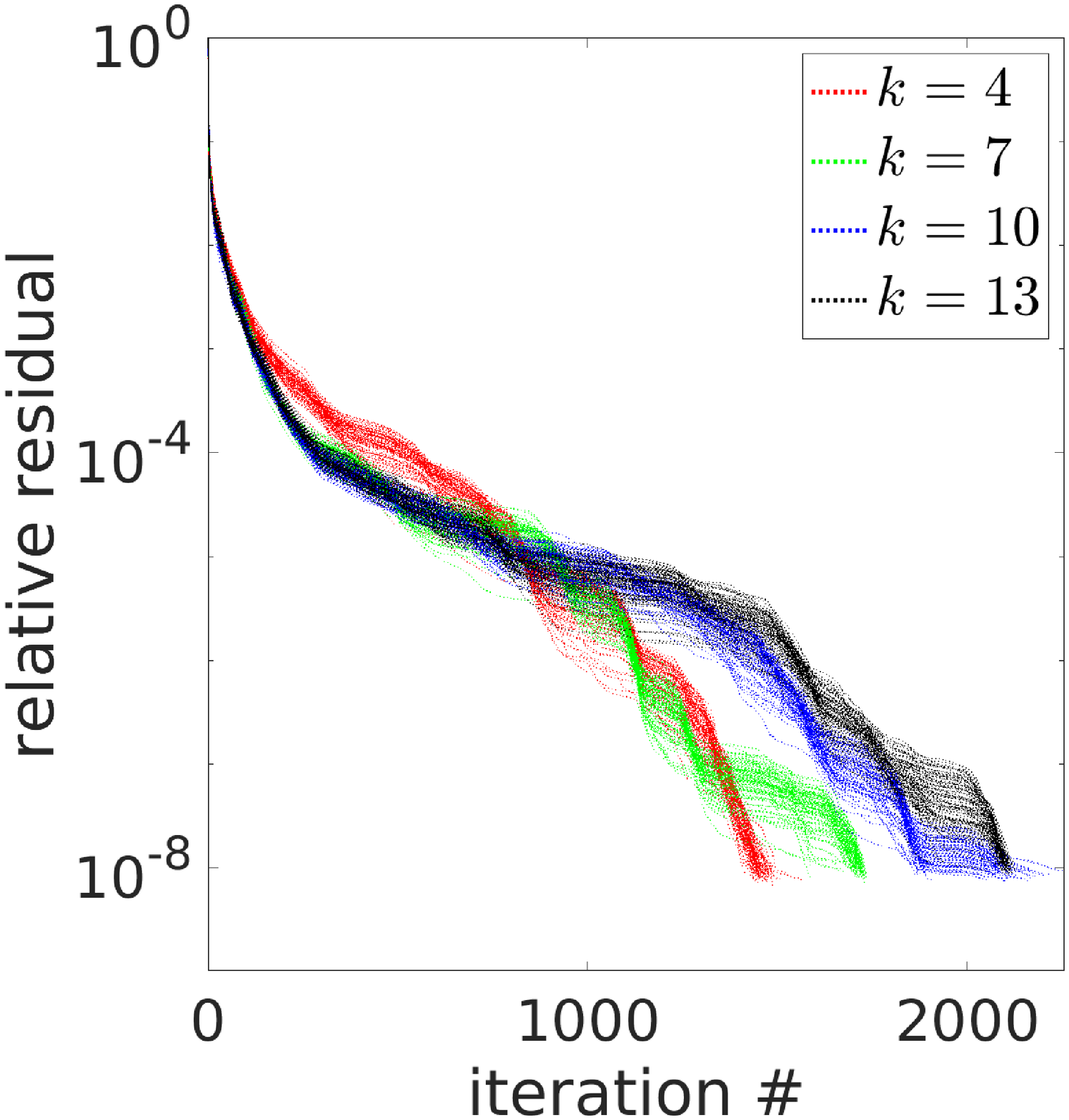}
		\caption{}
		\label{fig:tshape random resid n2}
	\end{subfigure}
	\hfill
	\begin{subfigure}{0.32\linewidth}
		\centering
		\includegraphics[width=\linewidth]{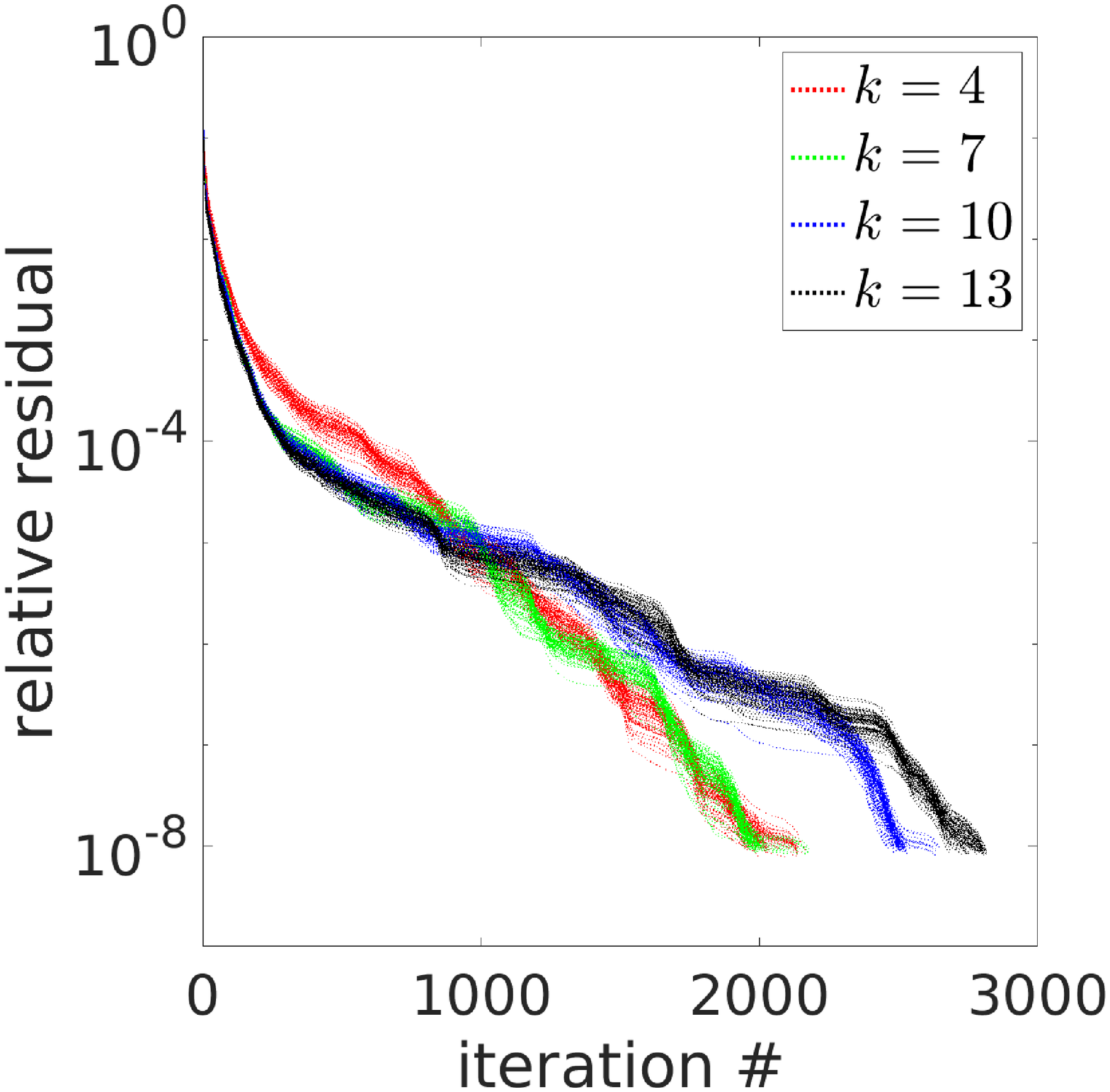}
		\caption{}
		\label{fig:tshape random resid n3}
	\end{subfigure}
	\caption{MINRES convergence history with $k\in \{4, 7, 10, 13\}$, stopping tolerance $10^{-8}$ for a sequence of random initial iterates for the T-shape problem with (a) $n=1$, (b) $n=2$, and (c) $n=3$ layers of geometrically graded elements at the corners.}
	\label{fig:tshape random resids}
\end{figure}

\appendix

\section{Technical Lemmas}
\label{sec:technical lemmas}

In this section, we establish a spectral equivalence of the ASM preconditioners given in \cref{sec:asm theory} to the inner products appearing in the Stokes equations. The main result is the following theorem, which is an immediate consequence of \cref{lem:glorified cs pressure,lem:pressure stability,lem:glorified cs velocity,lem:velocity stable decomp} proved later in this section:
\begin{theorem}
	\label{thm:spectral equivalences}
	There exists positive constants $C_1$ and $C_2$, independent of $k$ and $h$, such that
	\begin{align}
	\label{eq:pressure spectral equivalence}
	C_1^{-1} (p, p) \leq \bar{m}(p, p) \leq C_1 (p, p) \quad \forall p \in Q_I^{\perp},
	\end{align}
	and
	\begin{align}
	\label{eq:velocity spectral equivalence}
	C_2^{-1} a(\bdd{u}, \bdd{u}) \leq \bar{a}(\bdd{u}, \bdd{u}) \leq C_2 \beta^{-2} (1 + \log^3 k) a(\bdd{u}, \bdd{u}) \quad \forall \bdd{u} \in \tilde{\bdd{V}}_E,
	\end{align}
	where $\bar{m}(\cdot, \cdot)$ is defined in \cref{eq:bar c definition}, $\bar{a}(\cdot, \cdot)$ is defined in \cref{eq:bar a definition}, and $\beta$ is the inf-sup constant defined in \cref{eq:inf-sup global spaces}.
\end{theorem}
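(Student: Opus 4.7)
The plan is to prove Theorem 7.1 by treating the pressure and velocity equivalences separately, and within each, splitting the two-sided estimate along the classical ASM lines: a ``glorified Cauchy--Schwarz'' lemma supplies the upper bound on $\bar{m}$ (resp.\ $\bar{a}$) in terms of the Stokes inner product, while a stable decomposition lemma supplies the reverse direction. Since these four ingredients are exactly the lemmas cited in the statement, the theorem itself will reduce to a one-line combination once they are in hand; the work is entirely in the four lemmas.

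For the pressure equivalence, I would start from the direct sum splitting \cref{eq:QI direct sum decomposition} and exploit the fact that each summand is one-dimensional. The upper bound $\bar{m}(p,p)\le C_1(p,p)$ reduces to checking that every single-basis-function norm satisfies $|\omega|k^{-4}|p(\bdd{a})|^2\lesssim \|p\|_{L^2(\omega)}^2$ on $\tilde{Q}_{\bdd{a},\omega}$, together with a bounded-overlap argument between vertex patches; the $k^{-4}$ weight is precisely calibrated so that this holds because $\|\tilde{\psi}_{\bdd{a}}^{\omega}\|_{L^2(\omega)}^2\sim |\omega|k^{-4}$ for a Bernstein-type vertex function by explicit computation with the mass formula on $\hat T$. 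For the average-value subspaces the matching is exact. The reverse inequality (the stable decomposition) is the mirror calculation, using that the identities $p|_\omega(\bdd{a})=$ coefficient of $\tilde{\psi}_{\bdd{a}}^{\omega}$ and $|K|^{-1}\int_K p=$ coefficient of $\tilde{\psi}_K$ are $L^2$-bounded linear functionals on $Q_I^\perp$ with constants independent of $h,k$; these are known to follow from a scaling argument on $\hat T$ together with the shape regularity bound \cref{eq:shape regularity}. Neither direction requires the inf-sup constant, which explains why $\beta$ does not appear in \cref{eq:pressure spectral equivalence}.

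The velocity equivalence is the main obstacle and is where the $\log^3 k$ and $\beta^{-2}$ factors enter. For the upper bound $\bar{a}(\bdd{u},\bdd{u})\le C_2 a(\bdd{u},\bdd{u})$ I would combine a finite coloring of the edge subspaces (bounded by shape regularity) with the energy equivalence in \cref{eq:stokes ext equiv harmonic ext} applied element by element, so that applying the Stokes extension $\bdd{\Pi}_{\bdd{V}}$ to the reference vertex/edge basis yields a collection of elementwise-quasi-orthogonal pieces of controlled energy; the absence of a logarithm here is because one only sums over subspaces, not over competing decompositions. The hard direction is the stable decomposition bound, which requires that for an arbitrary $\bdd{u}\in\tilde{\bdd{V}}_E$ the splitting \cref{eq:u subspace decomp} satisfies $\sum_{\text{subspaces}} a(\bdd{u}_\bullet,\bdd{u}_\bullet)\lesssim (1+\log^3 k)\beta^{-2} a(\bdd{u},\bdd{u})$. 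I would follow the classical wire-basket strategy: first remove a coarse $C^0$-vertex component $\bdd{u}_c$, then the $C^1$-vertex components $\bdd{u}_{\bdd{a},\mu}$, and finally close out the remainder edge by edge. Each vertex lift supplies one factor of $\log k$ via a Babu\v ska--Craig--Mandel--Pitk\"aranta polynomial extension estimate, the additional $\log k$ comes from the $C^1$ (derivative) degrees of freedom, and the factor $\beta^{-2}$ arises because each intermediate component must be mapped back into $\tilde{\bdd{V}}_E$ through the Stokes extension, whose continuity constant is controlled by the interior inf-sup constant of \cref{thm:inf-sup global interior spaces}. The delicate point, and the principal technical difficulty, is to ensure that each piece of the decomposition genuinely lies in the correct subspace of $\tilde{\bdd{V}}_E$ (in particular, that $\dive \bdd{u}_\bullet\in\tilde Q_E$ and that the curl-orthogonality to $\Sigma_I$ in \cref{eq:tilde V E definition} is preserved), which is achieved by composing each intermediate candidate with $\bdd{\Pi}_{\bdd{V}}$ and using \cref{thm:extension velocity continuity} to absorb the resulting corrections into the energy budget.
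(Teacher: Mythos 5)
Your overall strategy is the paper's: the theorem is assembled from four lemmas (a Cauchy--Schwarz bound and a stable-decomposition bound for each of the pressure and velocity splittings), the pressure equivalence rests on the $k^{-4}$-weighted point-evaluation bound together with the $O(h_K k^{-2})$ decay of $\tilde{\psi}_{\bdd{a}}^{\omega}$, and the velocity stable decomposition follows the wire-basket pattern --- coarse $C^0$ vertex part via the Babu\v{s}ka--Craig--Mandel--Pitk\"aranta interpolation estimate, then the $C^1$ vertex parts, then the edges via an $H^{1/2}_{00}$ estimate with the Stokes extension equivalence \cref{eq:stokes ext equiv harmonic ext} supplying the $\beta^{-2}$.

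Two points need repair. First, in the velocity part you announce the ``easy'' inequality as $\bar a(\bdd{u},\bdd{u})\le C_2\, a(\bdd{u},\bdd{u})$. That inequality (with no logarithm) is not one of the two bounds in \cref{eq:velocity spectral equivalence}, and it is not what a coloring argument delivers; the coloring/Cauchy--Schwarz step proves the reverse, $a(\bdd{u},\bdd{u})\le C\,\bar a(\bdd{u},\bdd{u})$, which is the left-hand inequality (in the paper, \cref{lem:glorified cs velocity}, with the explicit constant $10$ from counting the subspaces meeting a fixed element --- no appeal to \cref{eq:stokes ext equiv harmonic ext} is needed there). As literally written, your plan establishes the right-hand bound of \cref{eq:velocity spectral equivalence} twice and the left-hand bound not at all; the fix is only a relabelling, since the mechanism you describe is the correct one for the missing direction. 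Second, your logarithm bookkeeping assigns one factor of $\log k$ to the $C^1$ (derivative) degrees of freedom. In the paper these contribute no logarithm: the $k^{-2}$ decay of the extended $C^1$ vertex functions in \cref{eq:velocity c1 vertex decay} exactly offsets the $k^{4}$ from the pointwise derivative bound. Instead, the count is one factor of $(1+\log k)$ from the coarse vertex interpolation (carried by $\hat{\bdd{u}}^{\#}$) and $(1+\log k)^2$ from squaring the $H^{1/2}_{00}$ cutoff estimate on each edge. Your total of $\log^3 k$ is unchanged, but the individual estimates you would have to prove are different from the ones you name.
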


\subsection{Pressure ASM}

We begin with the pressure ASM. The first lemma establishes a key estimate for the norm of the pressure vertex functions:
\begin{lemma}
	The pressure vertex functions functions $\tilde{\psi}_{\bdd{a}}^{\omega}$, $\bdd{a} \in \mathcal{V}$, $\omega \in \Omega_{\bdd{a}}$, satisfy
	\begin{align}
	\label{eq:psi decay}
	\| \tilde{\psi}_{\bdd{a}}^{\omega} \|_{L^2(K)} \leq C h_K k^{-2} \quad \forall K \in \mathcal{T},
	\end{align}
	with $C$ independent of $k$, $h_K$, and $\bdd{a}$.
\end{lemma}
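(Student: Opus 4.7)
My plan is to reduce the claim to an estimate on the reference triangle and then construct an explicit test polynomial via the reproducing kernel of $\mathcal P_k(\hat T)$. First, since $Q_I = \bigoplus_{K \in \mathcal T} Q_I(K)$ is an $L^2$-orthogonal direct sum across elements, the projection $\tilde\Pi : Q \to Q_I^{\perp}$ acts element-wise: writing $\psi_{\bdd a}^\omega = \tilde\psi_{\bdd a}^\omega + s$ with $s \in Q_I$ and testing against each $Q_I(K)$ shows that $s|_K$ is simply the $L^2(K)$-projection of $\psi_{\bdd a}^\omega|_K$ onto $Q_I(K)$. Therefore $\tilde\psi_{\bdd a}^\omega|_K$ vanishes on $K \not\subseteq \omega$, while on $K \subseteq \omega$ it equals the $L^2(K)$-projection of $\hat\psi_i \circ \bdd F_K^{-1}$ onto $Q_I(K)^{\perp_K}$ in $\mathcal P_k(K)$. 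A standard affine-pullback scaling using \cref{eq:shape regularity} then reduces the claim to the reference-element estimate $\|\tilde\Pi_{\hat T} \hat\psi_i\|_{L^2(\hat T)} \leq C k^{-2}$, where $\tilde\Pi_{\hat T}$ is the analogous reference projection.

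Next, I would recast this as a constrained extremal problem. Because $Q_I(\hat T)$ consists of polynomials vanishing at each vertex and with zero mean, $\tilde\Pi_{\hat T}$ preserves these four linear functionals, so $\tilde\Pi_{\hat T} \hat\psi_i$ is the unique element of the four-dimensional space $Q_I(\hat T)^{\perp_{\hat T}}$ with vertex values $(\delta_{ij})_j$ and vanishing integral. The minimality property of the orthogonal projection then yields
\begin{align*}
\|\tilde\Pi_{\hat T} \hat\psi_i\|^2_{L^2(\hat T)} = \inf\bigl\{ \|p\|^2_{L^2(\hat T)} : p \in \mathcal P_k(\hat T), \ p(\hat{\bdd a}_j) = \delta_{ij}, \ \textstyle\int_{\hat T} p\, d\bdd x = 0 \bigr\},
\end{align*}
so it suffices to exhibit one polynomial $p^*$ satisfying these four constraints with $\|p^*\|^2_{L^2(\hat T)} \leq C k^{-4}$. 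The naive choice $p^* = \hat\psi_i$ only achieves $\|p^*\|^2 \sim k^{-2}$ (since $\|B^k_{ke_i}\|^2_{L^2(\hat T)} \sim k^{-2}$), so a sharper construction is essential.

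I would construct $p^*$ using the reproducing kernel $K_k(\bdd x, \bdd y) = \sum_\nu \phi_\nu(\bdd x) \phi_\nu(\bdd y)$ of $\mathcal P_k(\hat T)$ with the $L^2(\hat T)$-inner product, expressed in a Dubiner orthonormal basis built from Jacobi polynomials via the Duffy transform. Explicit computations using the endpoint values $P_m^{(\alpha, 0)}(\pm 1)$ of the Jacobi polynomials yield the vertex-Christoffel estimate $K_k(\hat{\bdd a}_i, \hat{\bdd a}_i) \sim k^4$ and $|K_k(\hat{\bdd a}_i, \hat{\bdd a}_j)| = O(k^3)$ for $i \neq j$. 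Set $\tilde p_l := K_k(\cdot, \hat{\bdd a}_l)/K_k(\hat{\bdd a}_l, \hat{\bdd a}_l)$, which satisfies $\tilde p_l(\hat{\bdd a}_l) = 1$, $|\tilde p_l(\hat{\bdd a}_j)| = O(1/k)$ for $j \neq l$, $\int_{\hat T} \tilde p_l = 1/K_k(\hat{\bdd a}_l, \hat{\bdd a}_l) = O(k^{-4})$, and $\|\tilde p_l\|^2_{L^2(\hat T)} = O(k^{-4})$. Writing $p^* := \sum_{l=1}^{3} c_l \tilde p_l + d\,\lambda_1 \lambda_2 \lambda_3$, the four linear constraints give a $4\times 4$ system whose coefficient matrix is asymptotically the identity (off-diagonals are $O(1/k)$ or smaller), and is therefore invertible with $c_l = \delta_{il} + O(1/k)$ and $|d| = O(k^{-4})$. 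Expanding $\|p^*\|^2_{L^2(\hat T)}$ via the reproducing property $(\tilde p_l, \tilde p_{l'}) = K_k(\hat{\bdd a}_l, \hat{\bdd a}_{l'})/[K_k(\hat{\bdd a}_l, \hat{\bdd a}_l) K_k(\hat{\bdd a}_{l'}, \hat{\bdd a}_{l'})]$ and noting that $\lambda_1 \lambda_2 \lambda_3$ is $L^2$-orthogonal to every $\tilde p_l$ (it vanishes at each vertex, whence $(\tilde p_l, \lambda_1\lambda_2\lambda_3) = \lambda_1\lambda_2\lambda_3(\hat{\bdd a}_l)/K_k(\hat{\bdd a}_l, \hat{\bdd a}_l) = 0$), one obtains $\|p^*\|^2_{L^2(\hat T)} \leq C k^{-4}$. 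The main obstacle is the sharp vertex-Christoffel asymptotics $K_k(\hat{\bdd a}_i, \hat{\bdd a}_i) \sim k^4$, which is what produces the crucial extra factor of $k^{-1}$ beyond the trivial Bernstein bound and requires careful manipulation of the Jacobi-polynomial formulas near the coordinate singularity of the Duffy map.
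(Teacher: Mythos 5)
Your argument is sound and follows the same variational skeleton as the paper, but it supplies the crucial ingredient by a genuinely different construction. The paper's proof is short: it invokes Lemma 4.1 of Part II \cite{AinCP19StokesII}, which provides an explicit $\chi_K \in \mathcal{P}_{k-1}(K)\cap L^2_0(K)$ with the prescribed vertex values and $\|\chi_K\|_{L^2(K)} \le C h_K k^{-2}$, observes that $\psi_{\bdd{a}}^{\omega}-\chi \in Q_I$ so that $\tilde{\psi}_{\bdd{a}}^{\omega}=\tilde{\Pi}\chi$, and concludes from the non-expansiveness of the projection. Your reduction is the same idea made explicit: since $Q_I$ splits $L^2$-orthogonally over elements, $\tilde{\Pi}$ acts element-wise, and the minimum-norm characterization of the orthogonal projection reduces everything to exhibiting one competitor on $\hat{T}$ with the four constraints and squared norm $O(k^{-4})$ (your affine scaling back to $K$ via \cref{eq:shape regularity} is correct). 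You also correctly diagnose that the naive competitor $\hat{\psi}_i$ only gives $k^{-1}$ rather than $k^{-2}$, which is precisely why the cited lemma (or your kernel construction) is needed. Where you diverge is in building the competitor from the reproducing kernel of $\mathcal{P}_k(\hat{T})$: the bound $\|p^*\|^2_{L^2(\hat T)}\le Ck^{-4}$ rests on the corner Christoffel asymptotics $K_k(\hat{\bdd{a}}_i,\hat{\bdd{a}}_i)\gtrsim k^4$ together with enough off-diagonal smallness to invert the $4\times 4$ constraint system. Both facts are true (the corner Christoffel function of a triangle scales like $k^{-4}$, and the constant-function and $\lambda_1\lambda_2\lambda_3$ reproducing identities you use are exact), and your observation that only invertibility of the constraint matrix with bounded inverse is needed — Cauchy--Schwarz on $(\tilde p_l,\tilde p_{l'})$ already gives $O(k^{-4})$ — makes the argument robust to the precise off-diagonal rate. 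This buys a self-contained proof that does not lean on Part II, at the price of a nontrivial Jacobi/Duffy computation that you sketch but do not carry out; that computation is exactly the analogue of the work hidden in the paper's citation, so you should either execute it or cite a Christoffel-function result for the simplex before regarding the proof as complete.
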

\begin{proof}
	Let $\bdd{a} \in \mathcal{V}$, $\omega \in \Omega_{\bdd{a}}$. Define the function $\chi \in Q$ by the rule $\chi = \chi_K$ on each element $K \in \mathcal{T}$ where $\chi_K$ is chosen as in \cite[Lemma 4.1]{AinCP19StokesII}. In particular, $\chi_K \in \mathcal{P}_{k-1}(K) \cap L^2_0(K)$ satisfies (i) $\chi_K \equiv 0$ if $K \nsubseteq \omega$, (ii) $\chi_K(\bdd{b}) = \delta_{ \bdd{a} \bdd{b}}$ for $\bdd{b} \in \mathcal{V}_K$, and (iii) $\|\chi_K\|_{L^2(K)} \leq C h_K k^{-2}$ with $C$ independent of $h_K$, $k$, and $\bdd{a}$. By \cref{eq:stokes extension u and p 4}, \cref{eq:stokes extension u and p 5}, and \cref{thm:extension pressure continuity}, $\tilde{\psi}_{\bdd{a}} = \tilde{\Pi} \chi$, and $\| \tilde{\psi}_{\bdd{a}}^{\omega} \|_{L^2(K)} \leq \|\chi\|_{L^2(K)} \leq C h_K k^{-2}$ $\forall K \in \mathcal{T}$.
\end{proof}

We now show that the inner products on the subspaces are coercive:
\begin{lemma}
	\label{lem:pressure solvability}
	There exists a positive constant $C$ independent of $k$ and $h$ such that
	\begin{align*}
	(p, p) &\leq C m_{\bdd{a}, \omega}(p, p) & &\forall p \in \tilde{Q}_{\bdd{a},\omega}, \quad \bdd{a} \in \mathcal{V}, \quad \omega \in \Omega_{\bdd{a}}, \\
	(p, p) &\leq C m_{K}(p, p) & &\forall p \in \tilde{Q}_{K}, \quad K \in \mathcal{T}.
	\end{align*}
\end{lemma}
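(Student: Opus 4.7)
The plan is to exploit the fact that each of the spaces $\tilde{Q}_{\bdd{a},\omega}$ and $\tilde{Q}_K$ is one-dimensional, so both statements reduce to controlling the $L^2$-norm of the single generating function. I would proceed by carrying out each estimate separately, relying on the decay estimate \cref{eq:psi decay} already established and the shape regularity bound $h_K^2 \leq C|K|$ that follows from \cref{eq:shape regularity}.

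For the vertex estimate, I would write any $p \in \tilde{Q}_{\bdd{a},\omega}$ as $p = c \tilde{\psi}_{\bdd{a}}^{\omega}$. First I would observe that $\tilde{\psi}_{\bdd{a}}^{\omega}(\bdd{a}) = 1$: indeed, $\tilde{\psi}_{\bdd{a}}^{\omega} = \psi_{\bdd{a}}^{\omega} - r$ for some $r \in Q_I$, and functions in $Q_I$ vanish at all element vertices by the very definition of $Q_I(K)$, whilst $\psi_{\bdd{a}}^{\omega}(\bdd{a}) = 1$ by the reference construction in (i). Consequently $m_{\bdd{a},\omega}(p,p) = |\omega|k^{-4} c^2$. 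On the other hand, applying \cref{eq:psi decay} element by element and invoking shape regularity gives
\begin{align*}
\|p\|_{L^2(\Omega)}^2 = c^2 \sum_{K \subseteq \omega} \|\tilde{\psi}_{\bdd{a}}^{\omega}\|_{L^2(K)}^2 \leq C c^2 k^{-4} \sum_{K \subseteq \omega} h_K^2 \leq C c^2 k^{-4} |\omega|,
\end{align*}
which is exactly $C m_{\bdd{a},\omega}(p,p)$.

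For the element-average estimate, I would write $p = c \tilde{\psi}_K$. The first task is to evaluate $\int_K \tilde{\psi}_K$. Using the explicit formula $\tilde{\psi}_K = 1 - \sum_{\bdd{a} \in \mathcal{V}_K, \omega \supseteq K} \tilde{\psi}_{\bdd{a}}^{\omega}$ on $K$, it suffices to show $\int_K \tilde{\psi}_{\bdd{a}}^{\omega} = 0$. This follows from two observations: (a) $\psi_{\bdd{a}}^{\omega}|_K$ pulls back to a difference of two Bernstein polynomials $B_{ke_i}^k - B_{\beta}^k$, which have equal average, so $\int_K \psi_{\bdd{a}}^{\omega} = 0$; and (b) the corrector $r \in Q_I$ restricted to $K$ lies in $Q_I(K) \subset L^2_0(K)$, so it has vanishing integral. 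Hence $\int_K \tilde{\psi}_K = |K|$ and $m_K(p,p) = c^2 |K|$. To bound $\|p\|_{L^2(K)}^2$, the triangle inequality together with \cref{eq:psi decay} and shape regularity yields
\begin{align*}
\|\tilde{\psi}_K\|_{L^2(K)} \leq |K|^{1/2} + \sum_{\bdd{a} \in \mathcal{V}_K, \omega \supseteq K} \|\tilde{\psi}_{\bdd{a}}^{\omega}\|_{L^2(K)} \leq |K|^{1/2} + C h_K k^{-2} \leq C|K|^{1/2},
\end{align*}
which gives $\|p\|_{L^2(\Omega)}^2 \leq C c^2 |K| = C m_K(p,p)$.

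I do not anticipate a serious obstacle here: the proof is essentially bookkeeping once the decay estimate \cref{eq:psi decay} and the two averaging identities are in hand. The only subtlety worth flagging is the need to verify that the projection $\tilde{\Pi}$ preserves both the vertex value (it does, because $Q_I$ vanishes at vertices) and the element-wise integral (it does, because $Q_I(K) \subset L^2_0(K)$); both follow immediately from the definitions but are essential to keep the constants in $m_{\bdd{a},\omega}$ and $m_K$ free of spurious $k$-dependence.
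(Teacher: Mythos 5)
Your proof is correct and follows essentially the same route as the paper's: both arguments reduce to the one-dimensionality of the subspaces, the decay estimate \cref{eq:psi decay}, shape regularity, and the fact that $\tilde{\psi}_{\bdd{a}}^{\omega} \in L^2_0(K)$. The only difference is cosmetic---you use the triangle inequality where the paper expands $\tilde{\psi}_K^2$ and drops the vanishing cross term---and your explicit verification that $\tilde{\Pi}$ preserves vertex values and elementwise averages is a detail the paper leaves implicit.
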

\begin{proof}
	Let $p \in \tilde{Q}_{\bdd{a}, \omega}$, $\bdd{a} \in \mathcal{V}$, $\omega \in \Omega_{\bdd{a}}$. Then, $p = p(\bdd{a}) \tilde{\psi}_{\bdd{a}}^{\omega}$, and by \cref{eq:psi decay} and shape regularity \cref{eq:shape regularity}, there holds
	\begin{align*}
	(p,p) \leq C \sum_{K \in \mathcal{T} : K \subseteq \omega}  h_K^2 k^{-4} |p(\bdd{a})|^2 \leq C |\omega| k^{-4} = m_{\bdd{a}, \omega}(p, p).
	\end{align*}
	Now let $p \in Q_{K}$, $K \in \mathcal{T}$. Then, $p = (|K|^{-1} \int_{K} p \ d\bdd{x}) \tilde{\psi}_K$ and since $\tilde{\psi}_{\bdd{a}}^{\omega} \in L^2_0(K)$,
	\begin{align*}
	\int_{K} \tilde{\psi}_K^2 \ d\bdd{x} = \int_{K} \left\{ 1 + \left( \sum_{ \substack{\bdd{a} \in \mathcal{V}_K \\ \omega \supseteq K} } \tilde\psi_{\bdd{a}}^{\omega} \right)^2 \right\} \ d\bdd{x} \leq |K| + Ch_K^2 k^{-4} \leq C |K|.
	\end{align*}
	Thus, $(p, p) \leq C m_{K}(p,p)$.
\end{proof}
We are now able to establish the left-hand side of the equivalence \cref{eq:pressure spectral equivalence}:
\begin{lemma}
	\label{lem:glorified cs pressure}
	There exists a constant $C$ independent of $k$ and $h$ such that
	\begin{align}
	\label{eq:glorified cs pressure}
	(p, p) \leq C \bar{m}(p, p) \quad \forall p \in Q_I^{\perp}.
	\end{align}
\end{lemma}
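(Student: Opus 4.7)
The plan is to bound $\|p\|_{L^2(\Omega)}^2$ by summing over elements and matching the contribution on each $K$ to the corresponding diagonal term of $\bar{m}$. The starting observation is that the decomposition \cref{eq:QI direct sum decomposition}, restricted to a fixed $K$, has only four possibly nonzero summands (the three vertex contributions $p_{\bdd{a},\omega}$ with $\bdd{a} \in \mathcal{V}_K$, $\omega \supseteq K$, together with the single element-average contribution $p_K$), so a Cauchy--Schwarz inequality of the form
\begin{align*}
\|p\|_{L^2(K)}^2 \leq C\|p_K\|_{L^2(K)}^2 + C \sum_{\substack{\bdd{a} \in \mathcal{V}_K \\ \omega \supseteq K}} \|p_{\bdd{a},\omega}\|_{L^2(K)}^2
\end{align*}
reduces matters to two local estimates, which I would carry out separately.

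For the vertex piece, I would apply \cref{eq:psi decay} to $p_{\bdd{a},\omega} = p|_\omega(\bdd{a})\,\tilde{\psi}_{\bdd{a}}^{\omega}$, obtaining $\|p_{\bdd{a},\omega}\|_{L^2(K)}^2 \leq C h_K^2 k^{-4} (p|_\omega(\bdd{a}))^2$. Interchanging the sums over $K$ and over $(\bdd{a},\omega)$ and invoking shape regularity in the form $\sum_{K \subseteq \omega} h_K^2 \leq C|\omega|$ (valid since $h_K^2 \leq C|K|$ and the $K \subseteq \omega$ are disjoint) then collapses the iterated sum to $C\sum_{\bdd{a},\omega} k^{-4}|\omega|(p|_\omega(\bdd{a}))^2 = C\sum_{\bdd{a},\omega} m_{\bdd{a},\omega}(p_{\bdd{a},\omega}, p_{\bdd{a},\omega})$, exactly the first group of terms in $\bar{m}(p,p)$.

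For the element-average piece, I would write $p_K = c_K \tilde{\psi}_K$ with $c_K = |K|^{-1}\int_K p\,d\bdd{x}$; the bound $\|\tilde{\psi}_K\|_{L^2(K)}^2 \leq C|K|$ already established inside the proof of \cref{lem:pressure solvability} then yields $\|p_K\|_{L^2(K)}^2 \leq C c_K^2 |K|$. To relate this to $m_K(p_K,p_K) = |K|^{-1}(\int_K p_K\,d\bdd{x})^2$ I need the identity $\int_K \tilde{\psi}_K\,d\bdd{x} = |K|$, from which $m_K(p_K,p_K) = c_K^2 |K|$ and hence $\|p_K\|_{L^2(K)}^2 \leq C m_K(p_K,p_K)$ follow at once; summation over $K$ completes the estimate.

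The main technical point to verify is the identity $\int_K \tilde{\psi}_K\,d\bdd{x} = |K|$. This reduces to two sub-facts: every vertex pressure function $\psi_{\bdd{a}}^{\omega}$ has vanishing mean on each element (which follows from the equal-mean property of the Bernstein polynomials underlying the construction in \cref{sec:basis functions}), and the projection $\tilde{\Pi}$ preserves element-wise integrals (since any $q \in Q_I$ satisfies $\int_K q = 0$ by definition of $Q_I$, so $\int_K \tilde{\Pi} r - \int_K r = 0$ for every $r \in Q$). Granting these, summing the local estimates over all $K \in \mathcal{T}$ produces the desired bound $(p,p) \leq C\bar{m}(p,p)$; everything else is a standard triangle-inequality argument combined with shape regularity.
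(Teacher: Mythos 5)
Your proposal is correct and follows essentially the same route as the paper: the same four-term Cauchy--Schwarz split on each element, followed by the two local coercivity estimates, which the paper packages as \cref{lem:pressure solvability} and you simply re-derive inline (your vertex-term computation and the identity $\int_K \tilde{\psi}_K\,d\bdd{x} = |K|$, which holds because $\tilde{\Pi}$ preserves element-wise means and the vertex functions have zero mean, are exactly the content of that lemma's proof). No gaps.
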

\begin{proof}
	Let $p \in Q_I^{\perp}$. By Cauchy-Schwarz, there holds
	\begin{align*}
	(p, p)_K \leq 4 \left[ \sum_{ \substack{ \bdd{a} \in \mathcal{V}_{K} \\ \omega \supseteq K} } (p_{\bdd{a}, \omega}, p_{\bdd{a}, \omega})_K + (p_{K}, p_{K})_K \right]
	\end{align*}
	where $(p, q)_{K} := \int_{K} p q \ d\bdd{x}$.
	\cref{eq:glorified cs pressure} now follows from \cref{lem:pressure solvability} and summing over the elements.
\end{proof}
The right-hand side of the equivalence \cref{eq:pressure spectral equivalence} is covered by the next result:
\begin{lemma}
	\label{lem:pressure stability}
	There exists a positive constant $C$ independent of $k$ and $h$ such that
	\begin{align}
	\label{eq:pressure stability}
	\bar{m}(p,p) \leq C (p,p) \quad \forall p \in Q_I^{\perp}.
	\end{align}
\end{lemma}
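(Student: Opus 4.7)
The plan is to treat the vertex terms and the element-average terms in $\bar m(p,p)$ separately, showing that each is controlled by $\|p\|_{L^2(\Omega)}^2$ up to a constant depending only on the shape-regularity of the mesh. The element-average contribution is handled directly by the Cauchy--Schwarz inequality: for each $K\in\mathcal{T}$,
\[
m_K(p_K,p_K) = \frac{1}{|K|}\left(\int_K p\,d\bdd{x}\right)^2 \leq \|p\|_{L^2(K)}^2,
\]
and summation over the mesh immediately gives $\sum_K m_K(p_K,p_K)\leq \|p\|_{L^2(\Omega)}^2$.

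For the vertex terms, the key observation is that the weight $|\omega|\,k^{-4}$ is matched exactly by the standard polynomial inverse estimate
\[
\bigl(q(\bdd{a})\bigr)^2\leq C\,k^{4}|K|^{-1}\|q\|_{L^2(K)}^2 \qquad \forall q\in\mathcal{P}_{k-1}(K),
\]
which holds on any shape-regular triangle $K$ with a constant depending only on $\kappa$. Fix $\bdd{a}\in\mathcal{V}$ and $\omega\in\Omega_{\bdd{a}}$ and pick any element $K^{\star}\subseteq\omega$. Since functions in $Q_I^{\perp}\subset Q$ are continuous at noncorner vertices, and since $\omega=K^{\star}$ whenever $\bdd{a}\in\mathcal{V}_C$, one has $p|_\omega(\bdd{a})=p|_{K^{\star}}(\bdd{a})$. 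Applying the inverse estimate to $p|_{K^{\star}}\in\mathcal{P}_{k-1}(K^{\star})$ and using the shape-regularity bound $|\omega|\leq C|K^{\star}|$---valid because $|\mathcal{T}_{\bdd{a}}|$ is bounded and element sizes within the patch are comparable---produces
\[
|\omega|\,k^{-4}\bigl(p|_\omega(\bdd{a})\bigr)^2 \;\leq\; C\,\|p\|_{L^2(K^{\star})}^2 \;\leq\; C\,\|p\|_{L^2(\omega)}^2.
\]
Summing over $(\bdd{a},\omega)$ and invoking the finite-overlap property---each $K\in\mathcal{T}$ belongs to at most three patches, one per vertex of $K$---then yields $\sum_{\bdd{a},\omega}m_{\bdd{a},\omega}(p_{\bdd{a},\omega},p_{\bdd{a},\omega})\leq C\|p\|_{L^2(\Omega)}^2$, and combining with the element-average bound completes the proof.

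I do not anticipate a genuine obstacle: the argument is a routine combination of an inverse estimate, shape regularity, and a finite-overlap covering. The one point worth emphasising is that the $k^{-4}$ in the definition of $m_{\bdd{a},\omega}$ was introduced precisely to cancel the $k^{4}$ produced by the $L^{\infty}$--$L^2$ inverse estimate on a shape-regular triangle, which is exactly what makes $\bar m(\cdot,\cdot)$ spectrally equivalent to $(\cdot,\cdot)$. The point-evaluation identity $\tilde\psi_{\bdd{a}}^\omega(\bdd{a})=1$ implicit in the decomposition $p_{\bdd{a},\omega}=p|_\omega(\bdd{a})\tilde\psi_{\bdd{a}}^\omega$ poses no difficulty either, since $\tilde\Pi$ subtracts a function in $Q_I$, which by definition vanishes at all element vertices.
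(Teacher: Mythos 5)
Your proposal is correct and follows essentially the same route as the paper: the element-average terms are handled by Cauchy--Schwarz exactly as in the paper's proof, and the vertex terms are controlled by the same $L^\infty$--$L^2$ vertex inverse estimate with constant $Ck^4$ (the paper invokes \cite[Lemma 6.1]{AinJia19} for precisely this bound), followed by shape regularity to compare $|\omega|$ with $|K|$ and a finite-overlap summation. Your closing remark that the $k^{-4}$ weight in $m_{\bdd{a},\omega}$ is designed to cancel the $k^{4}$ from the inverse estimate is exactly the mechanism at work in the paper's argument.
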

\begin{proof}
	By \cite[Lemma 6.1]{AinJia19}, there holds
	\begin{align*}
	|p|_{K}(\bdd{a})|^2 k^{-4} = |(p|_{K} \circ \bdd{F}_K)(\hat{\bdd{a}})|^2  k^{-4} \leq C \| p \circ \bdd{F}_K \|_{L^2(\hat{T})}^2 \leq C h_K^2 \|p\|_{L^2(K)}^2
	\end{align*}
	with $\hat{\bdd{a}} = \bdd{F}_K^{-1}(\bdd{a})$, and by shape regularity,
	\begin{align*}
	m_{\bdd{a}, \omega}(p_{\bdd{a}, \omega}, p_{\bdd{a}, \omega}) =  |\omega| k^{-4} |p|_{{\omega}}(\bdd{a})|^2   \leq C \sum_{K \in \mathcal{T} : K \subseteq \omega } \|p\|_{L^2(K)}^2.
	\end{align*}
	Summing over $\bdd{a} \in \mathcal{V}$, $\omega \in \Omega_{\bdd{a}}$ and again using shape regularity to bound the overlap $|\{ \omega : \exists \bdd{a} \in \mathcal{V} : K \subseteq \omega \in \Omega_{\bdd{a}} \}|$ gives
	\begin{align*}
	\sum_{ \substack{\bdd{a} \in \mathcal{V} \\ \omega \in \Omega_{\bdd{a}} } } m_{\bdd{a}, \omega}(p_{\bdd{a}, \omega}, p_{\bdd{a}, \omega}) \leq C \sum_{ \substack{\bdd{a} \in \mathcal{V} \\ \omega \in \Omega_{\bdd{a}} } } \sum_{K \in \mathcal{T} : K \subseteq \omega } \|p\|_{L^2(K)}^2 \leq C \|p\|_{L^2(\Omega)}^2.
	\end{align*}
	To bound the remaining $m_K(\cdot, \cdot)$ terms, we use Cauchy-Schwarz:
	\begin{align*}
	\sum_{K \in \mathcal{T}} m_{K}(p_K, p_K) = \sum_{K \in \mathcal{T}} \frac{1}{|K|} \left( \int_{K} p \ d\bdd{x} \right)^2 \leq C \sum_{K \in \mathcal{T}} \|p\|_{L^2(K)}^2 \leq C \|p\|_{L^2(\Omega)}^2,
	\end{align*}
	which completes the proof of \cref{eq:pressure stability}.
\end{proof}

\subsection{Velocity ASM}
We now turn to the velocity space, and start by extending the decomposition \cref{eq:u subspace decomp} as follows. For $\bdd{u} \in \tilde{\bdd{V}}_E$, we define $\bdd{u}_{\gamma} \equiv \bdd{0}$ for $\gamma \in \mathcal{E} \setminus \mathcal{E}_I$ and $\bdd{u}_{\bdd{a}, \mu} \equiv \bdd{0}$ for $\bdd{a} \in \mathcal{V}$, $\unitvec{\mu} \in D_{\bdd{a}} \setminus \mathring{D}_{\bdd{a}}$. Since the inner product on each of the subspace was taken to be $a(\cdot, \cdot)$, we immediately obtain the left-hand side of the equivalence \cref{eq:velocity spectral equivalence}:
\begin{lemma}
	\label{lem:glorified cs velocity}
	For all $\bdd{u} \in \tilde{\bdd{V}}_E$, there holds
	\begin{align}
	\label{eq:glorified cs velocity}
	a(\bdd{u}, \bdd{u}) \leq 10 \bar{a}(\bdd{u}, \bdd{u}).
	\end{align}
\end{lemma}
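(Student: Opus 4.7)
The claim is a finite overlap (coloring) bound, so the plan is to reduce the global inequality to a local count of nonzero pieces on each element and then sum.

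First, I would fix $K \in \mathcal{T}$ and restrict the decomposition $\bdd{u} = \bdd{u}_c + \sum_{\bdd{a}, \mu} \bdd{u}_{\bdd{a}, \mu} + \sum_{\gamma \in \mathcal{E}_I} \bdd{u}_{\gamma}$ to $K$. I would then count how many of these summands are not identically zero on $K$: the global $C^0$ piece $\bdd{u}_c$ contributes one term; for each of the three vertices $\bdd{a} \in \mathcal{V}_K$ the relevant directions are $\unitvec{\mu} \in \mathring{D}_{\bdd{a}}$ such that $K \subseteq \supp \phi_{\bdd{a}}^{\mu}$, and by the construction of $\mathring{D}_{\bdd{a}}$ in \cref{eq:derivative directions v0} together with the fact that a corner vertex contributes a tangential direction only for edges of $K$ meeting at $\bdd{a}$, this gives at most $2$ directions per vertex, hence at most $6$ vertex contributions; finally, for the edge terms only $\gamma \in \mathcal{E}_K \cap \mathcal{E}_I$ matter, yielding at most $3$ terms. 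In total, at most $1 + 6 + 3 = 10$ summands are nonzero on $K$, which accounts for the constant $10$ in the statement.

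Next, I would apply the elementary Cauchy--Schwarz inequality $|\sum_{i=1}^{n} x_i|^2 \leq n \sum_{i=1}^{n} |x_i|^2$ componentwise to $\nabla \bdd{u}$ restricted to $K$, with $n = 10$. Integrating over $K$ gives
\begin{equation*}
|\bdd{u}|_{\bdd{H}^1(K)}^2 \leq 10 \left( |\bdd{u}_c|_{\bdd{H}^1(K)}^2 + \sum_{\bdd{a}, \mu} |\bdd{u}_{\bdd{a}, \mu}|_{\bdd{H}^1(K)}^2 + \sum_{\gamma \in \mathcal{E}_I} |\bdd{u}_{\gamma}|_{\bdd{H}^1(K)}^2 \right),
\end{equation*}
where the right-hand side is understood as containing only the at most ten nonzero terms on $K$ (the vanishing ones may be included harmlessly).

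Summing this elementwise bound over all $K \in \mathcal{T}$ recombines each subspace term into its global $a(\cdot,\cdot)$-seminorm, since $a(\bdd{v}, \bdd{v}) = \sum_{K \in \mathcal{T}} |\bdd{v}|_{\bdd{H}^1(K)}^2$ for each piece. Comparing with the definition \cref{eq:bar a definition} of $\bar{a}(\cdot,\cdot)$ yields exactly $a(\bdd{u}, \bdd{u}) \leq 10 \bar{a}(\bdd{u}, \bdd{u})$. The only nontrivial step is the local overlap count; once that is pinned down, the rest is the standard finite-overlap Cauchy--Schwarz argument used throughout additive Schwarz theory, and no sharpness in the constant $10$ is claimed beyond what this counting provides.
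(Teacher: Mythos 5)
Your proposal is correct and follows essentially the same argument as the paper: count that at most $1+6+3=10$ summands of the decomposition are (possibly) nonzero on each element $K$ (the paper notes that exactly two directions $\unitvec{\mu}$ per vertex have $K\subseteq\supp\phi_{\bdd{a}}^{\mu}$), apply the elementary Cauchy--Schwarz inequality with $n=10$ on $K$, and sum over the elements. No substantive difference from the paper's proof.
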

\begin{proof}
	First recall that there are exactly 2 directional derivative degrees of freedom per velocity component per vertex on any given element, i.e. for $K \in \mathcal{T}$, $| \{ \unitvec{\mu} : K \subseteq \supp \phi_{\bdd{a}}^{\mu} \} | = 2$.
	By Cauchy-Schwarz, there holds
	\begin{align*}
	|\bdd{u}|_{\bdd{H}^1(K)}^2 \leq 10  \left\{ |\bdd{u}_c|_{\bdd{H}^1(K)}^2 + \sum_{\substack{\bdd{a} \in \mathcal{V}_K \\ \unitvec{\mu} : K \in \supp \phi_{\bdd{a}}^{\mu} } } |\bdd{u}_{\bdd{a},\mu}|_{\bdd{H}^1(K)}^2 + \sum_{\gamma \in \mathcal{E}_K} |\bdd{u}_{\gamma}|_{\bdd{H}^1(K)}^2  \right\}, & &\forall K \in \mathcal{T}.
	\end{align*}
	\Cref{eq:glorified cs velocity} now follows by summing over the elements.
\end{proof}

To prove the right-hand side of \cref{eq:velocity spectral equivalence}, we need to establish some properties of the velocity vertex functions:
\begin{lemma}
	\label{lem:velocity vertex functions}
	The $C^0$ velocity vertex functions satisfy the following: For $K \in \mathcal{T}$,
	\begin{align}
	\label{eq:constants in span}
	\mathbb{R}^2 \ni \bdd{c} = \sum_{ \bdd{a} \in \mathcal{V}_K } \sum_{i=1}^{2} (\bdd{c} \cdot \unitvec{e}_i) \bdd{\Pi}_{\bdd{V}} (\phi_{\bdd{a}}\unitvec{e}_i) \quad \text{on } K
	\end{align}
	and
	\begin{align}
	\| \bdd{\Pi}_{\bdd{V}} (\phi_{\bdd{a}}\unitvec{e}_i) \circ \bdd{F}_K \|_{\bdd{H}^1(\hat{T})} &\leq C \quad \forall \bdd{a} \in \mathcal{V}, \ i=1,2, \label{eq:velocity c0 vertex decay}
	\end{align}
	where $C$ depends only on the shape regularity parameter.
	
	Moreover, the $C^1$ velocity vertex functions satisfy
	\begin{align}
	\label{eq:velocity c1 vertex decay}
	\| \bdd{\Pi}_{\bdd{V}} (\phi_{\bdd{a}}^{\mu} \unitvec{e}_i) \circ \bdd{F}_K \|_{\bdd{H}^1(\hat{T})} &\leq C \|D\bdd{F}_K\|_{L^{\infty}(\hat{T})} k^{-2} & & \bdd{a} \in \mathcal{V}, \ \unitvec{\mu} \in D_{\bdd{a}}, \ i=1,2, 
	\end{align}
	where $C$ depends only on the shape regularity parameter.
\end{lemma}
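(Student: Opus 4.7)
The lemma contains three claims. \textbf{(a)} For the identity \cref{eq:constants in span}, I would use linearity of $\bdd{\Pi}_{\bdd{V}}$ to reduce the right-hand side to $\bdd{\Pi}_{\bdd{V}}\left(\bdd{c} \sum_{\bdd{a} \in \mathcal{V}_K} \phi_{\bdd{a}}\right)$ on $K$. The scalar partition-of-unity identity $\sum_{\bdd{a} \in \mathcal{V}_K} \phi_{\bdd{a}} \equiv 1$ on $\partial K$ then follows edge by edge: on any edge of $\hat{T}$, only two of the reference Hermite traces $\hat{\phi}_i$ are nonzero, and they read $(1-t)^2(1+2t)$ and $t^2(3-2t)$, which sum identically to $1$. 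Consequently the input has boundary trace $\bdd{c}|_{\partial K}$, and the pair $(\bdd{c}, 0) \in \bm{\mathcal{P}}_k(K) \times \mathcal{P}_{k-1}(K)$ automatically satisfies the Stokes extension system \cref{eq:stokes extension u and p} on $K$ (because $\nabla \bdd{c} \equiv 0$ and $\dive \bdd{c} \equiv 0$), so the uniqueness guaranteed by \cref{thm:inf-sup global interior spaces} forces $\bdd{\Pi}_{\bdd{V}}(\bdd{c} \sum_{\bdd{a}} \phi_{\bdd{a}}) = \bdd{c}$ on $K$.

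\textbf{(b)} For \cref{eq:velocity c0 vertex decay} I would invoke the continuity estimate \cref{eq:stokes extension continuity u dagger} of \cref{thm:extension velocity continuity} with input $\phi_{\bdd{a}} \unitvec{e}_i$, giving $\|\bdd{\Pi}_{\bdd{V}}(\phi_{\bdd{a}} \unitvec{e}_i)\|_{\bdd{H}^1(K)} \leq C \|\phi_{\bdd{a}} \unitvec{e}_i\|_{\bdd{H}^{1/2}(\partial K)}$ with $C$ independent of $k$. Since $\phi_{\bdd{a}} \circ \bdd{F}_K = \hat{\phi}_i$ is a fixed cubic polynomial, its $H^{1/2}(\partial \hat{T})$ trace norm is an absolute constant, and standard affine scaling together with shape regularity transfers this back to the physical element with $k$-free prefactors. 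To produce the claimed $\bdd{H}^1(\hat{T})$ bound on the pull-back, I would use the seminorm equivalence built into shape-regular scaling to transfer the gradient piece, and a Poincar\'{e}-type inequality on the reference triangle relative to the (uniformly bounded) Dirichlet trace $\hat{\phi}_i \unitvec{e}_i|_{\partial \hat{T}}$ to control the $L^2(\hat{T})$ piece.

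\textbf{(c)} For \cref{eq:velocity c1 vertex decay} the strategy is the same, provided one can establish the reference estimate $\|\hat{\phi}_i^\alpha\|_{\bdd{H}^1(\hat{T})} \leq C k^{-2}$. The additional $\|D\bdd{F}_K\|_{L^\infty(\hat{T})}$ factor enters automatically through the definition \cref{eq:vertex c1 reference to physical}, which premultiplies $\hat{\phi}_i^\alpha$ by an entry of $[\unitvec{\mu}_1 \ \unitvec{\mu}_2]^{-1} D\bdd{F}_K$ whose modulus is controlled by $\|D\bdd{F}_K\|_{L^\infty}$ and the shape regularity of $K$. The $k^{-2}$ decay combines the prefactor $1/P_{k-3}^{(3,3)}(-1)$ from \cref{eq:vec J definition} with the Jacobi-endpoint identity $|P_{k-3}^{(3,3)}(-1)| = \binom{k}{3} \sim k^3$, reducing the task to an integrated bound of the type $\|\lambda_1^2 \lambda_2 P_{k-3}^{(3,3)}(\lambda_2 - \lambda_1)\|_{\bdd{H}^1(\hat{T})} = O(k)$, a weighted Jacobi estimate of the same flavour as those already used in the companion papers \cite{AinCP19Precon,AinJia19}.

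The principal obstacle is the Jacobi estimate in (c). A crude sup-norm bound on $P_{k-3}^{(3,3)}$ would grow too rapidly and destroy the $k^{-2}$ decay. The correct approach exploits the fact that the $\lambda_1^2 \lambda_2$ prefactor reproduces the intrinsic Jacobi weight $(1-x)^3(1+x)^3$ once one passes to tensorised Duffy coordinates on $\hat{T}$, so that $L^2$-orthogonality together with the classical derivative recurrence $\tfrac{d}{dx} P_n^{(3,3)} = \tfrac{n+7}{2} P_{n-1}^{(4,4)}$ yields the integrated $O(k)$ bound with the sharp power of $k$. Once this Jacobi estimate is established, assembly of parts (a)--(c) completes the proof.
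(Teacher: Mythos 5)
Your parts (a) and (b) follow essentially the paper's own argument. For (a) the paper uses the algebraic identity $1=\sum_{\bdd{a}\in\mathcal{V}_K}\phi_{\bdd{a}}+6\lambda_1\lambda_2\lambda_3$ together with the facts that $\bdd{\Pi}_{\bdd{V}}$ annihilates $\bdd{V}_I$ and fixes constants; your trace-based version of the same observation (the cubic Hermite value functions form a partition of unity on $\partial K$, and $\mathscr{E}(\bdd{c},0)=(\bdd{c},0)$) is equivalent. For (b), both you and the paper reduce to the continuity of the Stokes extension in a suitably scaled $\bdd{H}^{1/2}(\partial K)$ trace norm and the fact that $\hat{\phi}_j$ is a fixed cubic, so only the scaling bookkeeping differs.

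The gap is in (c). The paper never estimates the two-dimensional quantity $\|\lambda_1^2\lambda_2 P^{(3,3)}_{k-3}(\lambda_2-\lambda_1)\|_{\bdd{H}^1(\hat{T})}$ that you identify as the crux. It again routes everything through the trace: each component of $\vec{J}_1$ vanishes on two of the three edges of $\hat{T}$, and its restriction to the remaining edge is exactly the one-dimensional function $J(t)=\bigl(\tfrac{1+t}{2}\bigr)\bigl(\tfrac{1-t}{2}\bigr)^2 P^{(3,3)}_{k-3}(t)/P^{(3,3)}_{k-3}(-1)$, so the continuity of the Stokes extension reduces \cref{eq:velocity c1 vertex decay} to $\|J\|_{H^{1/2}_{00}(I)}\le Ck^{-2}$. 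That bound follows from the cited estimate $\|J\|_{L^2(I)}\le Ck^{-3}$ of \cite[Lemma B.1]{AinCP19Precon} combined with the one-dimensional inverse estimate $\|J'\|_{L^2(I)}\le Ck^2\|J\|_{L^2(I)}$ and interpolation. Your proposed two-dimensional estimate is genuinely harder, and your sketch of it is not correct as stated: squaring the prefactor gives the asymmetric weight $\lambda_1^4\lambda_2^2$, which under the Duffy collapse is a $(4,2)$-type weight (roughly $(5,3)$ after integrating the transverse variable), not $(1-x)^3(1+x)^3$; moreover the argument $\lambda_2-\lambda_1=(1-\lambda_3)t$ of the Jacobi polynomial is not the Duffy variable $t$, so $L^2$-orthogonality cannot be invoked directly. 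In addition, the gradient terms arising from $\nabla(\lambda_1^2\lambda_2)$ carry weights such as $\lambda_1^4$ that are not dominated by the orthogonality weight near the endpoint $s=-1$, precisely where $P^{(3,3)}_{k-3}\sim k^3$; a boundary-layer analysis shows the resulting contribution is borderline $O(k)$, so the claimed bound, while plausibly true, is not delivered by the recurrence-plus-orthogonality argument you describe. The missing reference estimate should therefore be replaced by the edge-trace reduction, after which the proof closes as in (b).
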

\begin{proof}
	Let $K \in \mathcal{T}$. A simple computation reveals that $	1 = (\lambda_1 + \lambda_2 + \lambda_3)^3 = \sum_{ \bdd{a} \in \mathcal{V}_K } \phi_{\bdd{a}} + 6\lambda_1 \lambda_2 \lambda_3$ on $K$, where $\{ \lambda_i: \ 1 \leq i \leq 3\}$ are the barycentric coordinates on $K$, and hence, for any $\bdd{c} \in \mathbb{R}^2$,
	\begin{align*}
	\bdd{c} = \sum_{\bdd{a} \in \mathcal{V}} \sum_{i=1}^{2} (\bdd{c}\cdot \unitvec{e}_i) (\phi_{\bdd{a}} \unitvec{e}_i) + \underbrace{ 6\bdd{c} \sum_{K \in \mathcal{T}} \lambda_1 \lambda_2 \lambda_3 }_{\in \bdd{V}_I} .
	\end{align*}
	Applying $\bdd{\Pi}_{\bdd{V}}$ to both sides of this identity and noting that \cref{thm:extension velocity continuity} gives $\bdd{\Pi}_{\bdd{V}} : \bdd{V}_I \to \{ \bdd{0} \}$, we obtain
	\begin{align*}
	\bdd{\Pi}_{\bdd{V}} \bdd{c} = \sum_{\bdd{a} \in \mathcal{V}} \sum_{i=1}^{2} (\bdd{c}\cdot \unitvec{e}_i) \bdd{\Pi}_{\bdd{V}} (\phi_{\bdd{a}} \unitvec{e}_i).
	\end{align*}
	Finally, \cref{thm:extension velocity continuity} implies that
	$\mathscr{E}(\bdd{c}, 0) = (\bdd{c}, 0)$ and \cref{eq:constants in span} follows at once.
	
	Now let $K \in \mathcal{T}$ and $i \in \{1,2\}$. Clearly \cref{eq:velocity c0 vertex decay} holds if $\bdd{a} \notin \mathcal{V}_K$ since $\phi_{\bdd{a}} \unitvec{e}_i = \bdd{0}$. Otherwise, if $\bdd{a} \in \mathcal{V}_K$, we apply a scaling argument in conjunction with \cref{eq:stokes extension continuity u dagger} to arrive at
	\begin{align*}
	\frac{1}{|K|^2} \|\bdd{\Pi}_{\bdd{V}}  (\phi_{\bdd{a}} \unitvec{e}_i) \|_{\bdd{L}^2(K)}^2 + | \bdd{\Pi}_{\bdd{V}} (\phi_{\bdd{a}} \unitvec{e}_i) |_{\bdd{H}^1(K)}^2 \leq C \left\{ | \phi_{\bdd{a}}|_{H^{1/2}(\partial K)}^2 + \frac{1}{|\partial K|} \|\phi_{\bdd{a}}\|_{L^2(\partial K)}^2  \right\}, 
	\end{align*}
	where $C$ is a positive constant independent of $k$ and $h_K$. Thus,
	\begin{align*}
	\| \bdd{\Pi}_{\bdd{V}} (\phi_{\bdd{a}}\unitvec{e}_i) \circ \bdd{F}_K \|_{\bdd{H}^1(\hat{T})} &\leq C \| \hat{\phi}_{j} \|_{H^{1/2}(\partial \hat{T})} \leq C \| \hat{\phi}_{j} \|_{H^{1}(\hat{T})} \leq C,
	\end{align*}
	where $\hat{\bdd{a}}_j = \bdd{F}_K^{-1}(\bdd{a})$. For $K \subseteq \supp \phi_{\bdd{a}}^{\mu}$, we argue similarly and use \cref{eq:vertex c1 reference to physical} to obtain
	\begin{align*}
	\| \bdd{\Pi}_{\bdd{V}}  (\phi_{\bdd{a}}^{\mu} \unitvec{e}_i) \circ \bdd{F}_K \|_{\bdd{H}^1(\hat{T})} &\leq C \left\| \begin{bmatrix}
	\unitvec{\mu} & \unitvec{\xi}
	\end{bmatrix}^{-1} \right\| \|D\bdd{F}_K\|_{L^{\infty}(\hat{T})}  \left\| \begin{bmatrix}
	\hat{\phi}_j^{(1,0)} \\
	\hat{\phi}_j^{(0,1)}
	\end{bmatrix} \right\|_{\bdd{H}^{1/2}(\partial \hat{T})},
	\end{align*}
	where $\unitvec{\mu} \neq \unitvec{\xi} \in D_{\bdd{a}}$ is chosen such that $\supp \phi_{\bdd{a}}^{\xi} \supseteq K$, $\hat{\bdd{a}}_j = \bdd{F}_K^{-1}(\bdd{a})$, and $\|\cdot\|$ is any matrix norm. By the definition of $D_{\bdd{a}}$ \cref{eq:derivative directions} and shape regularity,  $\left\| \begin{bmatrix}
		\unitvec{\mu} & \unitvec{\xi}
	\end{bmatrix}^{-1} \right\|$ is uniformly bounded by a constant depending only on $\kappa$ \cref{eq:shape regularity}. Since $\hat{\phi}_{j}^{(1,0)}(\hat{\bdd{a}}) = \hat{\phi}_{j}^{(0,1)}(\hat{\bdd{a}}) = {0}$ for $\hat{\bdd{a}} \in \mathcal{V}_{\hat{T}}$ by the construction \cref{eq:vec J definition}, there holds
	\begin{align*}
	\| \bdd{\Pi}_{\bdd{V}}  (\phi_{\bdd{a}}^{\mu} \unitvec{e}_i) \circ \bdd{F}_K \|_{\bdd{H}^1(\hat{T})} &\leq C \|D\bdd{F}_K\|_{L^{\infty}(\hat{T})} \|J\|_{H^{1/2}_{00}(I)}
	\end{align*}
	where $I = (-1, 1)$, $H^{1/2}_{00}(I)$ is the usual Sobolev space (defined as, e.g. \cite{Lions12}), and
	\begin{align*}
	J(t) = \frac{1}{P^{(3,3)}_{k-3}(-1)} \left( \frac{1+t}{2} \right) \left( \frac{1-t}{2} \right)^2 P^{(3,3)}_{k-3}(t). 
	\end{align*}
	Thanks to \cite[Lemma B.1]{AinCP19Precon}, $\|J\|_{L^2(I)} \leq C k^{-3}$ with $C$ independent of $k$. Using interpolation, and the inverse estimate $\|J'\|_{L^2(I)} \leq C k^2 \|J\|_{L^2(I)}$ \cite[Lemma 5.4]{Bern89}, we obtain $\|J\|_{H^{1/2}_{00}(I)} \leq C \|J\|_{L^2(I)}^{1/2} \|J\|_{H^1(I)}^{1/2} \leq C k^{-2}$, which completes the proof of \cref{eq:velocity c1 vertex decay}.	
\end{proof}

We now use the properties of the vertex functions to prove element-wise stability of the subspace decomposition \cref{eq:u subspace decomp}:
\begin{lemma}
	\label{lem:velocity reference stability}
	For $\bdd{u} \in \tilde{\bdd{V}}_E$ and $K \in \mathcal{T}$, there holds
	\begin{align}
	\label{eq:velocity reference stability}
	| \bdd{u}_c |_{\bdd{H}^1(K)}^2 + \sum_{ \substack{\bdd{a} \in \mathcal{V}_K \\ \unitvec{\mu} : K \in \supp \phi_{\bdd{a}}^{\mu} } } | \bdd{u}_{\bdd{a}, \mu} |_{\bdd{H}^1(K)}^2 + \sum_{ \gamma \in \mathcal{E}_K} | \bdd{u}_{\gamma} |_{\bdd{H}^1(K)}^2 \leq C \beta^{-2}(1 + \log^3 k) |\bdd{u}|_{\bdd{H}^1(K)}^2,
	\end{align}
	where $C$ is independent of $k$, $h_K$ and $\bdd{u}$.
\end{lemma}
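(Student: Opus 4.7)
The plan is to pull everything back to the reference triangle $\hat{T}$ via the affine map $\bdd{F}_K$ and bound each term in the decomposition separately, tracking one factor of $1+\log k$ per term. Since $\bdd{u}\in\tilde{\bdd{V}}_E$ satisfies $\bdd{u}=\bdd{\Pi}_{\bdd{V}}\bdd{u}$ by \cref{thm:extension velocity continuity}, and every component $\bdd{u}_c$, $\bdd{u}_{\bdd{a},\mu}$, $\bdd{u}_\gamma$ is itself a Stokes extension of its own trace on $\partial K$, the equivalence \cref{eq:stokes ext equiv harmonic ext} reduces \cref{eq:velocity reference stability} to bounding the trace seminorms $|\cdot|_{\bdd{H}^{1/2}(\partial K)}$ of the components by $C(1+\log^3 k)|\bdd{u}|_{\bdd{H}^{1/2}(\partial K)}^2$; the factor $\beta^{-2}$ then enters once when converting back via \cref{eq:stokes ext equiv harmonic ext}.

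For the $C^0$ vertex piece, I would use the identity \cref{eq:constants in span} to subtract the element mean $\bar{\bdd{u}}_K$ so that only vertex values of the mean-zero correction appear, then invoke the Bramble–Pasciak–Schatz-type inequality $|v(\bdd{a})-\bar v|^2 \leq C(1+\log k)\,\|v\|_{H^{1/2}(\partial K)}^2$ for polynomials of degree $k$, together with the uniform bound \cref{eq:velocity c0 vertex decay}; this yields one log factor. For the $C^1$ vertex piece, the relation $\bdd{u}_{\bdd{a},\mu} = \sum_i \partial_\mu(\bdd{u}\cdot\unitvec{e}_i)(\bdd{a})\,\bdd{\Pi}_{\bdd{V}}(\phi_{\bdd{a}}^\mu \unitvec{e}_i)$ combines with the decay bound \cref{eq:velocity c1 vertex decay} (giving a factor $h_K k^{-2}$) and a Markov-plus-log inverse estimate $|\partial_\mu v(\bdd{a})|^2 \leq C h_K^{-2}k^4(1+\log k)|v|_{\bdd{H}^1(K)}^2$, so that the $h_K^{\pm 1}$ and $k^{\pm 2}$ factors cancel and exactly one log survives.

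For the edge piece, $\bdd{u}_\gamma$ is obtained from $\bdd{u}$ by subtracting off the vertex parts and zero-extending along the remaining edges of $K$. After pulling back to $\hat\gamma$ I would bound $|\bdd{u}_\gamma|_{\bdd{H}^{1/2}(\partial K)}$ using the polynomial $H^{1/2}_{00}$-versus-$H^{1/2}$ equivalence of Babu\v{s}ka–Suri–Bernardi type: for a degree-$k$ polynomial vanishing at the endpoints of $\hat\gamma$,
\[
\|p\|_{H^{1/2}_{00}(\hat\gamma)}^2 \leq C(1+\log k)\,\|p\|_{H^{1/2}(\hat\gamma)}^2,
\]
which supplies two further factors of $1+\log k$ once combined with a trace/extension argument for the Stokes extension (rather than the harmonic extension), since we must first absorb the endpoint vertex value and vertex derivative corrections before invoking the zero-trace embedding. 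Summing the $C^0$ vertex ($\log$), $C^1$ vertex ($\log$), and edge ($\log^2$) contributions yields the claimed $1+\log^3 k$ dependence after combining via the Cauchy–Schwarz inequality over the $\mathcal{O}(1)$ contributing indices attached to $K$.

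The main obstacle is the edge-remainder bound: one must show that after subtracting the vertex pieces the edge residual lies in (or can be controlled by) $H^{1/2}_{00}(\hat\gamma)$ with at most $1+\log^2 k$ deterioration, \emph{and} do so in the mixed setting where the extension is the Stokes extension (which is why we retain the $\beta^{-2}$ factor) rather than a harmonic one. This requires carefully coordinating the subtraction of the $C^1$ vertex modes—whose scaling of $h_K k^{-2}$ is sharp—with the trace estimate, so that each derivative degree of freedom absorbs precisely one logarithm and no spurious factors of $k$ or $h_K^{-1}$ remain. Once the three logs are accounted for, assembling the final inequality is routine by shape regularity and the uniform bound on the number of basis functions supported on any fixed element $K$.
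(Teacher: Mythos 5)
Your proposal follows essentially the same route as the paper's proof: pull back to $\hat{T}$, subtract a constant using the partition-of-unity identity \cref{eq:constants in span}, bound the $C^0$ vertex piece with one logarithm via a BCMP-type pointwise estimate, bound the $C^1$ vertex piece by pairing the $k^{-2}$ decay \cref{eq:velocity c1 vertex decay} against a vertex-derivative inverse estimate, and control the edge residual through the polynomial $H^{1/2}_{00}$--$H^{1/2}$ equivalence, with the Stokes-extension equivalence \cref{eq:stokes ext equiv harmonic ext} supplying the $\beta^{-2}$. The only caveat is bookkeeping: the $\log^3 k$ arises multiplicatively in the edge term alone ($\log^2$ from squaring the $H^{1/2}_{00}$ equivalence times one $\log$ inherited by the residual $\bdd{u}^{\#}$ from the $C^0$ vertex subtraction), not by summing the three contributions, and the paper's $C^1$ vertex bound is in fact log-free since \cite[Lemma 6.1]{AinJia19} gives $|D\hat{\bdd{u}}(\hat{\bdd{a}})|^2 k^{-4}\le C\|D\hat{\bdd{u}}\|_{L^2(\hat{T})}^2$ with no logarithm.
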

\begin{proof}
	Let $\bdd{u} \in \tilde{\bdd{V}}_E$ and $K \in \mathcal{T}$. For any  $\bdd{c} \in \mathbb{R}^2$, we have the decomposition
	\begin{align*}
	\label{eq:u minus c decomp}
	\bdd{u} - \bdd{c} = \bdd{u}_c - \sum_{ \bdd{a} \in \mathcal{V}_K } \sum_{i=1}^{2} (\bdd{c} \cdot \unitvec{e}_i) \bdd{\Pi}_{\bdd{V}} (\phi_{\bdd{a}}\unitvec{e}_i) + \sum_{ \substack{\bdd{a} \in \mathcal{V}_K \\ \unitvec{\mu} : K \in \supp \phi_{\bdd{a}}^{\mu} } } \bdd{u}_{\bdd{a}, \mu} + \sum_{\gamma \in \mathcal{E}_K} \bdd{u}_{\gamma} \quad \text{on } K
	\end{align*}
	thanks to \cref{eq:constants in span}. Thus,
	\begin{align*}
	\hat{\bdd{u}} - \bdd{c} = \hat{\bdd{u}}_c - \sum_{ {\bdd{a}} \in \mathcal{V}_{K} } \sum_{i=1}^{2} (\bdd{c} \cdot \unitvec{e}_i) \bdd{\Pi}_{\bdd{V}} (\phi_{\bdd{a}}\unitvec{e}_i) \circ \bdd{F}_K + \sum_{ \substack{\bdd{a} \in \mathcal{V}_K \\ \unitvec{\mu} : K \in \supp \phi_{\bdd{a}}^{\mu} } } \hat{\bdd{u}}_{\bdd{a}, \mu} + \sum_{\gamma \in \mathcal{E}_K} \hat{\bdd{u}}_{\gamma} \quad \text{on } \hat{T},
	\end{align*}
	where $\hat{\bdd{u}} = \bdd{u} \circ \bdd{F}_K$, $\hat{\bdd{u}}_c = \bdd{u}_c \circ \bdd{F}_K$, etc. We first bound the energy of $\hat{\bdd{u}}_c$.
	Since
	\begin{align*}
	\hat{\bdd{u}}_c &= \sum_{ \bdd{a} \in \mathcal{V}_K } \sum_{i=1}^{2} (\bdd{u}(\bdd{a}) \cdot \unitvec{e}_i) \bdd{\Pi}_{\bdd{V}} (\phi_{\bdd{a}} \unitvec{e}_i) \circ \bdd{F}_K \\
	&= \sum_{ \hat{\bdd{a}} \in \mathcal{V}_{\hat{T}} } \sum_{i=1}^{2} (\hat{\bdd{u}}(\hat{\bdd{a}}) \cdot \unitvec{e}_i) \bdd{\Pi}_{\bdd{V}} (\phi_{\bdd{a}} \unitvec{e}_i) \circ \bdd{F}_K \quad \text{on } \hat{T},
	\end{align*}
	where $\bdd{a} = \bdd{F}_K(\hat{\bdd{a}})$,
	we use \cite[Corollary 6.3]{BCMP91} and \cref{eq:velocity c0 vertex decay} to obtain
	\begin{align}
	\|\hat{\bdd{u}}_c - \bdd{c}\|_{\bdd{H}^1(\hat{T})}^2 &\leq \sum_{\hat{\bdd{a}} \in \mathcal{V}_{\hat{T}}} |\hat{\bdd{u}}  (\hat{\bdd{a}}) - \bdd{c}|^2 \sum_{i=1}^{2} \| \bdd{\Pi}_{\bdd{V}} (\phi_{\bdd{a}} \unitvec{e}_i) \circ \bdd{F}_K \|_{\bdd{H}^1(\hat{T})}^2  \notag \\
	&\leq C (1+\log k) \| \hat{\bdd{u}} - \bdd{c} \|_{\bdd{H}^1(\hat{T})}^2. \label{eq:coarse decay}
	\end{align}

	We now bound the vertex derivative contribution. For $\bdd{a} \in \mathcal{V}_K$, we note that $\partial_{\mu}(\bdd{u}\cdot \unitvec{e}_i)(\bdd{a}) =  \unitvec{\mu}^T D\bdd{F}_K^{-T} D(\bdd{u} \cdot \unitvec{e}_i \circ \bdd{F}_K)(\hat{\bdd{a}})$, $i = 1,2$, where $\hat{\bdd{a}} = \bdd{F}_K^{-1}({\bdd{a}})$.  Applying \cite[Lemma 6.1]{AinJia19} to $D (\bdd{u} \circ \bdd{F}_K )$, and using \cref{eq:velocity c1 vertex decay} and shape regularity gives
	\begin{align}
	\label{eq:c1 decay}
	\|\hat{\bdd{u}}_{\bdd{a},\mu}\|_{\bdd{H}^1(\hat{T})}^2 &\leq C \|D\bdd{F}_K^{-T}\|_{L^{\infty}(K)}^2 \cdot \|D\bdd{F}_K\|_{L^{\infty}(\hat{T})}^{2} \cdot |D \hat{\bdd{u}} (\hat{\bdd{a}})|^2 k^{-4}
	\leq C | \hat{\bdd{u}} |_{\bdd{H}^1(\hat{T})}^2.
	\end{align}
	
	Now, we define 
	\begin{align*}
	\bdd{u}^{\#} := (\bdd{u} - \bdd{c}) - (\bdd{u}_{c} - \bdd{c}) - \sum_{ \substack{\bdd{a} \in \mathcal{V}_K \\ \unitvec{\mu} : K \in \supp \phi_{\bdd{a}}^{\mu} } } \bdd{u}_{\bdd{a},\mu} \quad \text{on } K.
	\end{align*}
	Then, $D^{\alpha} \bdd{u}^{\#}(\bdd{a}) = \bdd{0}$ for $\bdd{a} \in \mathcal{V}_K$, $|\alpha| \leq 1$, and thanks to \cref{eq:coarse decay,eq:c1 decay}, $\hat{\bdd{u}}^{\#} := \bdd{u}^{\#} \circ \bdd{F}_K$ may be estimated as follows:
	\begin{align}
	\label{eq:u sharp estimate}
	\| \hat{\bdd{u}}^{\#} \|_{\bdd{H}^1(\hat{T})}^2  \leq C(1 + \log k)\| \hat{\bdd{u}} - \bdd{c} \|_{\bdd{H}^1(\hat{T})}^2
	\end{align}
	Let $\gamma \in \mathcal{E}_K$. \Cref{eq:stokes ext equiv harmonic ext}, shape regularity \cref{eq:shape regularity}, and the trace theorem give
	\begin{align}
	\label{eq:u gamma intermediate}
	|\hat{\bdd{u}}_{\gamma}|_{\bdd{H}^{1}( \hat{T})} &\leq C \beta^{-1} | \bdd{u}_{\gamma} |_{\bdd{H}^{1/2}(\partial K)} \leq C \beta^{-1} | \hat{\bdd{u}}_{\gamma} |_{\bdd{H}^{1/2}(\partial \hat{T})} \leq C \beta^{-1} \| \hat{\bdd{u}}^{\#} \|_{\bdd{H}_{00}^{1/2}(\hat{\gamma})},
	\end{align}
	where $\hat{\gamma} = \bdd{F}_K^{-1}(\gamma)$. 	Thanks to \cite[Theorem 6.5]{BCMP91} and the trace theorem, we have the estimate
	\begin{align}
	\label{eq:u sharp 00 estimate}
	\| \hat{\bdd{u}}^{\#}  \|_{\bdd{H}^{1/2}_{00}(\hat{\gamma})} \leq C(1 + \log k) \| \hat{\bdd{u}}^{\#} \|_{\bdd{H}^{1/2}(\hat{\gamma})} \leq C(1 + \log k) \| \hat{\bdd{u}}^{\#} \|_{\bdd{H}^{1}(\hat{T})}.
	\end{align}
	Using \cref{eq:u sharp 00 estimate,eq:u sharp estimate,eq:u gamma intermediate} gives
	\begin{align}
	\label{eq:u gamma decay} 
	|\hat{\bdd{u}}_{\gamma} |_{\bdd{H}^1(\hat{T})}^2 \leq C \beta^{-2} (1 + \log^2 k) \| \hat{\bdd{u}}^{\#} \|_{\bdd{H}^1(\hat{T})}^2 \leq C \beta^{-2}(1 + \log^3 k)  \| \hat{\bdd{u}} - \bdd{c} \|_{\bdd{H}^1(\hat{T})}^2, 
	\end{align}	
	Combining \cref{eq:coarse decay,eq:c1 decay,eq:u gamma decay} leads to
	\begin{align*}
	| \hat{\bdd{u}}_c |_{\bdd{H}^1(\hat{T})}^2 + \sum_{ \substack{{\bdd{a}} \in \mathcal{V}_{K} \\ \unitvec{\mu} : K \in \supp \phi_{\bdd{a}}^{\mu} } } |\hat{\bdd{u}}_{\bdd{a}, \mu}|_{\bdd{H}^1(\hat{T})}^2 &+ \sum_{ \gamma \in \mathcal{E}_K} | \hat{\bdd{u}}_{\gamma} |_{\bdd{H}^1(\hat{T})}^2 \\ 
	&\qquad \leq C \beta^{-2} (1 + \log^3 k) \| \hat{\bdd{u}} - \bdd{c} \|_{\bdd{H}^1(\hat{T})}^2, 
	\end{align*}
	where we used that $| \hat{\bdd{u}}_c |_{\bdd{H}^1(\hat{T})} = | \hat{\bdd{u}}_c - \bdd{c} |_{\bdd{H}^1(\hat{T})}$. Taking the infimum over all $\bdd{c} \in \mathbb{R}^2$ and applying the quotient norm equivalence \cite[Theorem 7.2]{Nec11} gives
	\begin{align*}
	| \hat{\bdd{u}}_c |_{\bdd{H}^1(\hat{T})}^2 + \sum_{ \substack{\bdd{a} \in \mathcal{V}_K \\ \unitvec{\mu} : K \in \supp \phi_{\bdd{a}}^{\mu} } } |\hat{\bdd{u}}_{\bdd{a}, \mu}|_{\bdd{H}^1(\hat{T})}^2 + \sum_{ \gamma \in \mathcal{E}_K} | \hat{\bdd{u}}_{\gamma} |_{\bdd{H}^1(\hat{T})}^2 \leq C\beta^{-2}(1 + \log^3 k) |\hat{\bdd{u}}|_{\bdd{H}^1(\hat{T})}^2.
	\end{align*}	
	\Cref{eq:velocity reference stability} now follows from shape regularity \cref{eq:shape regularity}. 
\end{proof}

Summing \cref{eq:velocity reference stability} over the elements leads to the following:
\begin{lemma} 
	\label{lem:velocity stable decomp}
	There exists a constant $C$ independent of $k$ and $h$ such that
	\begin{align}
	\label{eq:velocity stability}
	\bar{a}(\bdd{u}, \bdd{u}) \leq C\beta^{-2}(1 + \log^3 k) a(\bdd{u}, \bdd{u}) \quad \forall \bdd{u} \in \tilde{\bdd{V}}_E.
	\end{align}
\end{lemma}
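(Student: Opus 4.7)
The plan is to obtain the global bound \cref{eq:velocity stability} as an immediate summation of the element-wise estimate \cref{eq:velocity reference stability} established in \cref{lem:velocity reference stability}. All the heavy lifting (the vertex function decay, the $\bdd{H}^{1/2}_{00}$ trace bound, the logarithmic factors, and the dependence on $\beta^{-2}$) has been absorbed into that local estimate, so only a rearrangement of sums is needed here.

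First I would rewrite $\bar{a}(\bdd{u},\bdd{u})$ by expanding each of the bilinear forms $a(\cdot,\cdot)$ appearing in the definition \cref{eq:bar a definition} as a sum of element contributions $|\cdot|_{\bdd{H}^1(K)}^2$. Using the convention introduced at the start of \cref{sec:technical lemmas} that $\bdd{u}_{\bdd{a},\mu} \equiv \bdd{0}$ for $\unitvec{\mu} \in D_{\bdd{a}} \setminus \mathring{D}_{\bdd{a}}$ and $\bdd{u}_{\gamma} \equiv \bdd{0}$ for $\gamma \in \mathcal{E} \setminus \mathcal{E}_I$, the support properties of the basis functions guarantee that $\bdd{u}_{\bdd{a},\mu}$ is supported in the elements containing $\bdd{a}$ with $K \subseteq \supp \phi_{\bdd{a}}^{\mu}$, and $\bdd{u}_{\gamma}$ is supported in the (at most two) elements adjacent to $\gamma$. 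Swapping the order of summation then yields
\begin{align*}
\bar{a}(\bdd{u},\bdd{u}) = \sum_{K \in \mathcal{T}} \left\{ |\bdd{u}_c|_{\bdd{H}^1(K)}^2 + \sum_{ \substack{\bdd{a} \in \mathcal{V}_K \\ \unitvec{\mu} : K \in \supp \phi_{\bdd{a}}^{\mu} } } |\bdd{u}_{\bdd{a},\mu}|_{\bdd{H}^1(K)}^2 + \sum_{\gamma \in \mathcal{E}_K} |\bdd{u}_{\gamma}|_{\bdd{H}^1(K)}^2 \right\}.
\end{align*}

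Next I would apply \cref{eq:velocity reference stability} element-wise to bound the term in braces by $C\beta^{-2}(1 + \log^3 k)|\bdd{u}|_{\bdd{H}^1(K)}^2$ with $C$ independent of $K$, $k$ and $h_K$. Summing over $K \in \mathcal{T}$ produces
\begin{align*}
\bar{a}(\bdd{u},\bdd{u}) \leq C\beta^{-2}(1 + \log^3 k) \sum_{K \in \mathcal{T}} |\bdd{u}|_{\bdd{H}^1(K)}^2 = C\beta^{-2}(1 + \log^3 k) \, a(\bdd{u},\bdd{u}),
\end{align*}
which is precisely \cref{eq:velocity stability}. Since the whole argument is just a rearrangement plus one appeal to \cref{lem:velocity reference stability}, there is no real obstacle; the only delicate point is bookkeeping the supports so that every nonzero local contribution on $K$ is captured exactly once, which is handled by the convention that $\bdd{u}_{\bdd{a},\mu}$ and $\bdd{u}_{\gamma}$ vanish identically outside their natural supports.
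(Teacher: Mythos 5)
Your proposal is correct and follows exactly the paper's route: the paper states \cref{lem:velocity stable decomp} as the immediate consequence of summing the element-wise estimate \cref{eq:velocity reference stability} over $K \in \mathcal{T}$, which is precisely your argument. The bookkeeping of supports you spell out (so that each local contribution is counted once when rewriting $\bar{a}(\bdd{u},\bdd{u})$ as a sum over elements) is implicit in the paper but handled identically via the convention that $\bdd{u}_{\bdd{a},\mu}$ and $\bdd{u}_{\gamma}$ vanish outside their natural supports.
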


\end{document}